\documentclass[12pt]{article}
\usepackage{amsmath,amssymb,amsfonts,amsthm}
\setlength{\parindent}{18pt}
\textwidth14cm
\textheight21.6cm
\newenvironment{myabstract}{\par\noindent
{\bf Abstract . } \small }
{\par\vskip8pt minus3pt\rm}
\newcounter{item}[section]
\newcounter{kirshr}
\newcounter{kirsha}
\newcounter{kirshb}
\newenvironment{enumroman}{\setcounter{kirshr}{1}
\begin{list}{(\roman{kirshr})}{\usecounter{kirshr}} }{\end{list}}
\newenvironment{enumarab}{\setcounter{kirshb}{1}
\begin{list}{(\arabic{kirshb})}{\usecounter{kirshb}} }{\end{list}}
\newenvironment{athm}[1]{\vskip3mm\par\noindent
{\bf #1 }. \slshape }
{\upshape\par\vskip10pt minus3pt}
\newtheorem{theorem}{Theorem}[section]

\newtheorem{lemma}[theorem]{Lemma}
\newtheorem{corollary}[theorem]{Corollary}
\newenvironment{demo}[1]{\noindent{\bf #1.}\upshape\mdseries}
{\nopagebreak{\hfill\rule{2mm}{2mm}\nopagebreak}\par\normalfont}
\theoremstyle{definition}

\newtheorem{definition}[theorem]{Definition}
\def\R{\mathbb{R}}

\def\C{{\mathfrak{C}}}

\def\At{{\bf At}}
\def\Nr{{\mathfrak{Nr}}}

\def\Sg{{\mathfrak{Sg}}}

\def\A{{\mathfrak{A}}}
\def\B{{\mathfrak{B}}}
\def\C{{\mathfrak{C}}}
\def\D{{\mathfrak{D}}}
\def\M{{\mathfrak{M}}}
\def\N{{\mathfrak{N}}}

\def\CA{{\bf CA}}

\def\Df{{\bf Df}}

\def\Lf{{\bf Lf}}

\def\RCA{{\bf RCA}}

\def\Rd{{\ Rd}}
\def\(R)RA{{\bf (R)RA}}

\def\RRA{{\bf RRA}}

\def\R{\mathbb{R}}

\def\c #1{{\cal #1}}
 \def\CA{{\sf CA}}
\def\B{{\sf B}}

\def\w{{\sf w}}
\def\y{{\sf y}}
\def\g{{\sf g}}

\def\r{{\sf r}}

 \def\Cm{{\mathfrak{Cm}}}
\def\Nr{{\mathfrak{Nr}}}

\def\restr #1{{\restriction_{#1}}}
\def\cyl#1{{\sf c}_{#1}}

\def\set#1{\{#1\} }

\def\Nr{{\mathfrak{Nr}}}
\def\Tm{{\mathfrak{Tm}}}
\def\A{{\mathfrak{A}}}
\def\B{{\mathfrak{B}}}
\def\C{{\mathfrak{C}}}
\def\D{{\mathfrak{D}}}

\def\A{{\mathfrak{A}}}
\def\B{{\mathfrak{B}}}
\def\C{{\mathfrak{C}}}
\def\D{{\mathfrak{D}}}

\def\L{{\mathfrak{L}}}
\def\Rd{{\mathfrak{Rd}}}

\def\At{{\mathfrak{At}}}
\def\L{{\mathfrak{L}}}

\def\CA{{\bf CA}}

\def\RRA{{\bf RRA}}
\def\RCA{{\bf RCA}}

\def\F{{\mathfrak{F}}}
\def\At{{\sf{At}}}
\def\N{\mathbb{N}}
\def\R{\mathfrak{R}}

\def\CCA{{\bf CCA}}

\def\cyl#1{{\sf c}_{#1}}

\def\c #1{{\cal #1}}

\def\pa{$\forall$}
\def\pe{$\exists$}

\def\nodes{{\sf nodes}}

\def\restr #1{{\restriction_{#1}}}

\def\Nr{{\mathfrak{Nr}}}
\def\Z{{\cal Z}}
\def\CA{{\bf CA}}
\def\RCA{{\bf RCA}}
\def\c#1{{\mathcal #1}}

\def\set#1{ \{#1\}}

\def\pe{$\exists$}
\def\pa{$\forall$}
\def\Cm{{\mathfrak Cm}}
\def\Sg{{\mathfrak Sg}}

\def\Rl{{\mathfrak Rl}}
\def\N{{\cal N}}

\def\At{{\sf At}}

\def\rng{{\sf rng}}
\def\dom{{\sf dom}}

\def\w{{\sf w}}
\def\g{{\sf g}}
\def\y{{\sf y}}
\def\r{{\sf r}}
\def\bb{{\sf b}} 

\def\cyl#1{{\sf c}_{#1}}

\def\ws{winning strategy}

\def\y{{\sf y}}
\def\g{{\sf g}}
\def\bb{{\sf b}}
\def\r{{\sf r}}
\def\w{{\sf w}}

\title{On Completions, neat atom structures, and omitting types}
\author{Tarek Sayed Ahmed \\
Department of Mathematics, Faculty of Science,\\ 
Cairo University, Giza, Egypt.
  }
%
\begin{document}
\maketitle
\begin{myabstract} This paper has a survey character, but it also contains several new results.
The paper tries to give a panoramic picture of the recent developments in algebraic logic.
We take a long magical tour in algebraic logic starting from classical notions due to Henkin Monk and Tarski
like neat embeddings, culminating in presenting sophisticated model theoretic constructions based on graphs, to solve problems on neat reducts.

We investigate several algebraic notions that apply to varieties of 
boolean algebras with operators in general $BAOs$, like canonicity, atom-canonicity and completions. We also show that in certain significant
special cases, when we have a Stone-like representabilty notion, like in cylindric, relation and polyadic algebras
such abtract notions turn out intimately related 
to more concrete notions, like complete representability, 
and the existence of weakly but not srongly representable atom structures.

In connection to the multi-dimensional  corresponding modal logic, we 
prove several omitting types theorem for the finite $n$ variable fragments of first order logic, 
the multi-dimensional modal logic corresponding to $CA_n$; the class of cylindric algebras of dimension $n$.

A novelty that occurs here is that
theories could be uncountable. Our constructions depend on deep model-theoretic results of Shelah. 

Several results mentioned in \cite{OTT} without proofs are proved fully here, one such result is:
There exists an uncountable atomic algebra in $\Nr_n\CA_{\omega}$ that is not completely representable. 
Another result: If $T$ is an $L_n$ theory (possibly uncountable), where $|T|=\lambda$, $\lambda$ is a regular cardinal, and $T$ admits elimination of 
quantifiers, then
$<2^{\lambda}$ non principal types {\it maximal} can be omitted. 

A central notion, that connects, representations, completions, complete representations for cylindric algebras is that of neat embedding,
which is an algebraic counterpart of Henkin constructions, and is a nut cracker in cylindric-like 
algebras for proving representation results and related results concerning various forms of the amalagmation property
for clases of representable algebras. 
For example, representable algebras are those algebras that have the neat embeding property, 
completey representable countable ones, are the atomic algebras that have the complete neat embedding 
property. We show that countability cannot be omitted which is sharp in view to our omitting types theorem mentiond above.
We show that the class $\Nr_n\CA_{\omega}$ is psuedo-elementary, not elementary, and its elementary closure is not finitely axiomatizable
for $n\geq 3$, We  characterize this class by games.

We give two constructions for weakly representable atom structures that is not 
strongly representable, that are simple variations on existing themes, and using fairly straightforward modificatons 
of constructons of Hirsch and Hodkinson, we show that
the latter class is not elementary for any reduct of polyadic algebras contaiting all cylindrifiers. 

We introduce the new notions of strongly neat, weakly neat and very weakly neat atom structures.
An $\alpha$ dimensional  atom strucure is  very weakly neat, $\alpha$
an ordinal,  if no algebra based on it in 
$\Nr_{\alpha}\CA_{\alpha+\omega};$ weakly neat if it has at least one algebra based on it that 
is in $\Nr_{\alpha}\CA_{\alpha+\omega}$, and finally  strongly neat if every algebra based on it is in 
$\Nr_{\alpha}\CA_{\alpha+\omega}.$ We give examples of the first two, show that they are distinct, 
and further show that the class of weakly 
neat atom structures is not elementary.
This is done for all dimensions $>2$, infinite included. For the third, we show that finite atom structures are strongly neat (in finite dimensions).

Modifying several constructions in the literature, as well as providing new ones, several 
results on complete representations and completions are presented, 
answering several questions posed by Robin Hirsch, and Ian Hodkinson, concerning relation algebras, and complete representabiliy
of reducts of plyadic algebras.
\footnote{Mathematics Subject Classification. 03G15; 06E25}
 
\end{myabstract}

\section{Introduction}

Atom canonicity, completions, complete representations and omitting types are  four notions that could appear at first glimpse unrelated. 
The first three, are algebraic notions that apply 
to varieties of Boolean algebras with operators $BAO$s. Omitting types is a metalogical notion that applies to the corresponding multi-modal logic. 

Canonicity is one of the most important concepts of modal 
completeness theory. From an algebraic perspective, canonical models are not abstract oddities, 
they are precisey the structure one is led to 
by underlying the idea in Stones representabilty theory for Boolean algebras.

The canonical extension of an algebra  has universe the power set algebra of 
the set of ultrafilters; that is its Stone space, and  the extra non-Boolean operations induced naturally from the original ones.
A variety is canonical if it is closed under taking canonical
extensions. 

This is typically a {\it persistence property}. Persistence properties refer to closure of a variety $V$  under 
passage from a given member in $V,$ to some 'larger' algebra .

The other  persistence property, namely, atom-canonicity, concerns the atom structure $\At\A$ of an atomic algebra $\A$. 
As the name suggests, $\At\A$ is a certain relational structure based on the set of atoms of $\A$. 
A variety $V$ is atom-canonical if it contains the complex algebra $\Cm\At(\A)$ whenever it contains $\A$.
If $\At V$ is the class of all atom structures of atomic algebras in $V$, 
then atom-canonicity amounts to the requirement that $\Cm\At V\subseteq V$.%

The (canonical) models of a multi-modal logic $\L_V$, corresponding to a canonical variety $V$,  are Kripke frames; 
these are the ultrafilter frames. The  atom structures, are special cases, we call these atomic 
models.

The canonical extension of an algebra is a complete atomic algebra that the original algebra embeds into, however it only preserves finite meets. 
Another completion is the Dedekind MacNeille completion, also known as the 
minimal completion. Every completely  additive $BAO$ has such a completion. It is uniquely 
determined by the fact that it is complete and the original algebra is dense in it; hence it preserves all meets existing in the original 
algebra.
The completion is atomic if and only if the original algebra is. The completion and canonical extension of an algebra 
only coincide when the algebra is finite.

Complete representations has to do with algebras that have a notion of representations involving - using jargon of modal logic - complex algebras 
of square frames or, using algebraic logic jargon, full set algebras having square units, and also having the 
Boolean operations of (concrete) intersections and complementation,
like relation algebras and cylindric algebras.  Unlike atom-canonicity, this notion is semantical.
Such  representations is a representation that carries existing (possibly infinite) meets to set theoretic 
intersections.

Atomic representability is also related to the Orey-Henkin omitting types theorem. Let $V$ be a variety of $BAO$'s which has a notion of representation,
like for example $CA_n$. The variety $CA_n$ corresponds to the syntactical part of $L_n$ 
first order logic restricted to the first $n$ variables, while $RCA_n$ corresponds to the semantical models 

Indeed given an $L_n$ theory $T$ and a model $\M$ of $T$, let $\phi^M$ denote the set of$n$-ary assignments satisfying
$\phi$ in $\M$, notationaly $\phi^{\M}=\{s\in {}^nM: \M\models \phi[s]\}$.
Then $\{\phi^M:\phi\in L\}$ is the universe of a cylindric set algebra $Cs_n$ with the operations of cyindrifiers corresponding to 
the semantics of existential quabtifiers
and diagonal elements to equality. The class of subdirect products of algebras in 
$Cs_n$ is the class $RCA_n$.

A set $\Gamma$ in the language of $T$ is said to be omitted by the model $\M$ of $T$, 
if $\bigcap \phi^{\M}=\emptyset$. This can be formulated algebraically as follows:
Let $\A\in \CA_n$ and  let there be given a family $(X_i: i\in I)$ of subsets of $\A$,  then there exists an injective homomorphism 
$f:\A\to \wp(V)$, $V$ a disjoint union of cartesian squares, that  omits the $X_i$'s, that is $\bigcap_{x\in X_i} f(x)=0$.

The Orey-Henkin omitting types theorem says that this always happens if $\A$ is countable and locally finite,
and when $I$ is countable, and  the $X_i'$ contain only finitely many 
dimensions (free variables). 
But it is clear that the above algebraic formulation lends itself to other contexts.

Note that if omitting types theorem holds for arbitrary cardinalities, then the 
atomic representable algebras are completely representable, by finding a representation that omits the 
non principal types $\{-x: x\in \At\A\}$. The converse is false. There are easy examples. Some will be provided below.

Now let us try to find a connection between such notions. 
Consider a variety$V$ of $BAO$s that is {\it not} atom-canonical. Ths means that there is an $\A\in V$, such that $\Cm\At\A\notin V$. 
If $V$ is completely aditive, then $\Cm\At\A$, is the completion of $\A$. So $V$ is not closed 
under completions. 

There are significant varieties that are not atom-canonical, like the variety of representable cylindric algebras 
$\RCA_n$ for $n\geq 3$ and representable relation algebras $\RRA$.
These have a notion of representability involving square units. 

Let $\A$ be an atomic representable such that $\Cm\At\A\notin \RCA_{n}$, for $n\geq 3$. Such algebras exist, the first of its kind was constructed
by Hodkinson. The term algera $\Tm\At\A$, which is the subalgebra of the complex algebra,
is contained in $\A$, because $\RCA_n$ is completely additiv; furthermore, it is 
representable but it {\it cannot} have a complete representation, for such a representation
would necessarily induce a representation of $\Cm\At\A$. That is to say, $\A$ is an example of an atomic theory in the cylindric modal logic of dimension
$n$, but it has no atomic model. 
In such a context, $\At\A$ is also an example of a {\it weakly representable} atom structure that is not {\it strongly representable.}

A weakly representable atom structure is an atom structure such that there is at least one algebra based on this atom structure that is 
representable \cite{w}. It is strongly representable, if every algebra based on it is representable.
Hodkinson, the first to construct a weakly representable cylindric atom structure that is not strongly representable, 
used a somewhat complicated construction depending on so called Rainbow 
constructions. His proof is model-theoretic.
In \cite{w} we use the same method to construct such an atom structure for both relation and cylindric algebras.
On the one hand, the graph we used substantially simplifies Hodkinson's construction, 
and, furthermore, we get our result for relation and cylindric algebras in one go.


Hirsch and Hodkinson show that the class of strongly representable atom structures of relation algebras 
(and cylindric algebras) is not elementary \cite{strong}.
The construction makes use of the pobabilistic method of  Erd\"os to show that there are finite graphs with arbitrarily large 
chromatic number and girth.   
In his pioneering paper of 1959, Erdos took a radically new approach to construct such graphs: for each $n$ he 
defined a probability space on the set of graphs with $n$ vertices, and showed that, for some carefully chosen probability measures, 
the probability that an $n$ vertex graph has these properties is positive for all large enough $n$.
This approach, now called the {\it probabilistic method} has since unfolded into a 
sophisticated and versatile proof technique, in graph theory and in other 
branches of discrete mathematics.
This method was used first in algebraic logic by Hirsch and Hodkinson to show that the class of strongly representable atom structures of cylindric 
and relation algebras is not elementary  
and that varieties of representable relation algebras are barely canonical \cite{hv}.
But yet again using these methods of Erd\"os in  \cite{ghv} it is shown 
that there exist continuum-many canonical equational classes of Boolean algebras 
with operators that are not generated by the complex algebras of any first-order 
definable class of relational structures. Using a variant of this construction  the authors resolve the long-standing question of Fine, 
by exhibiting a bimodal logic that is valid in its canonical frames, but is not sound and complete for any first-order definable class of 
Kripke frames. 

There is an ongoing interplay between algebraic logic on the one hand, and model theory and finite combinatorics particularly graph theory, 
on the other. 
Monk was the first to use Ramsey's theorems to construct what is known an Monk's algebras.
witnessing non finite axiomatizability for 
the class of representable cylindric algebras.
The key idea of the construction of a Monk's algebra is not so hard. 
Such algebras are finite, hence atomic, more precisely their Boolean reducts are atomic.
The atoms are given colours, and 
cylindrifications and diagonals are defined by stating that monochromatic triangles 
are inconsistent. If a Monk's algebra has many more atoms than colours, 
it follows from Ramsey's Theorem that any representation
of the algebra must contain a monochromatic triangle, so the algebra
is not representable. 

Later Monk-like constructions were substantially generalized by Andr\'eka N\'emeti \cite{ANT}, 
Maddux \cite{m}, and finally Hirsch and Hodkinson \cite{HHM}.
Constructing algebras from Erdos graphs have 
proved extremely rewarding \cite{strong}, \cite{hv}, \cite{ghv}. 
Another construction invented by Robin Hirsch and Ian Hodkinson is the so-called rainbow construction, which  is an ingenious technique 
that has been used to show that several classes are not elementary \cite{HH}, \cite{HHbook}, \cite{r}, and was used together with a 
lifting argument of Hodkinson of construction polyadic algebras from relation algebras to show that it is undecidable whether a finite relation or 
cylindric algebra is representable.
This shows that  ceratin important products modal logics are undecidable.
We will use the rainbow construction below to prove the $CA$ analogue of a deep result in \cite{r}.

Constructing cylindric algebras based on certain models satisfyig certain properties like homogeniouty, saturation, 
elimination of quantifiers, using model theory like in \cite {MLQ}, will be generalized below to answer a question of Hirsch \cite{r},
on relation algebra reducts of cylindric algebras.

Another model theoretic construction of Hodkinson, 
based on rainbow graphs, considerably simplified in \cite{w} will be further simplified here 
to prove  that several varieties approximating the class of representable cylindric algebras are not closed under completions.

\section*{ The main new results}

\begin{enumarab}

\item  Answering a question of Robin Hirsch in \cite{R} on complete representations for both relation and cylindric algebras 
using an example in the same paper. This example shows that in the characterization of countable completely representable algebras,
both relation and cylindric algebras, the condition of countability is necessary, it {\it cannot} be omitted.

\item Using an example by Andr\'eka et all, to show that unlike cylindric and polyadic equality algebras, 
atomic polyadic algebras, and Pinters substitution algebras, even without cylindrfiers, of 
dimension $2$ may  not be completely representable. However, the class of completely representable algebras 
is not so hard to characterize; it is finitely axiomatizable in first order logic.
This is contrary to a current belief for polyadic algebras, and is an answer to a result of Hodkinson for cylindifier free
Pinter's algebras.

\item Using the construction in Andr\'eka et all \cite{ANT}, 
showing that the omitting types theorem fails for finite first 
order definable extension of first order logic as defined by Tarski and Givant, and further pursued by others, like 
Maddux and Biro,  a result mentioned in the above cited paper without a proof.

\item Characterizing the class $\Nr_n\CA_{\omega}$ by games, and 
showing that the class $\Nr_n\CA_{\omega}$ is pseudo elementary, and its elementary closure is not finitely axiomatizable.

\item Characterizing the class of countable completely representable algebras of infinitely countable dimensions using weak representations 
(the question remains whether this class is elementary, 
the Hirsch Hodkinson example depending on a cardinality argument does not works when our units are weak spaces.)


\item Giving full proofs to three results mentioned in \cite{OTT} without proofs, referring to a pre-print, 
concerning omitting types in uncountable theories using finitely
many variables. This is the pre print, expanded, modified and polished containing proofs of these results and much more.
The results concerning omitting types depend on deep model-theoretic constructions of Shelah's.

\item We show that the class of weakly neat atom structures, as defined in the abstract, is not elementary for every dimension.

\item Unlike the cylindric case, we show that atomic polyadic algebras of infinite dimensions are completely representable.

\end{enumarab}

This paper also simplifies existing proofs in the literature, like the proof in \cite{ANT}, concerning complete representations of relation 
atom structures, having cylindric basis. Some classical results, like Monk's non-finitizability results for relation and cylindric algebras
are also re-proved.


In this preprint we ony deal with the notin of weakly neat aom structurs. A longer preprint contains all other results.

\section{Neat reducts, complete representations and games}

Next we characterize the class $\Nr_n\CA_{\omega}$ using games.
Our treatment in this part 
follows very closely \cite{R}. The essential difference is that we deal with $n$ 
dimensional networks and composition moves are replaced by cylindrifier moves in the games.

\begin{definition}\label{def:string} 
Let $n$ be an ordinal. An $s$ word is a finite string of substitutions $({\sf s}_i^j)$, 
a $c$ word is a finite string of cylindrifications $({\sf c}_k)$.
An $sc$ word is a finite string of substitutions and cylindrifications
Any $sc$ word $w$ induces a partial map $\hat{w}:n\to n$
by
\begin{itemize}

\item $\hat{\epsilon}=Id$

\item $\widehat{w_j^i}=\hat{w}\circ [i|j]$

\item $\widehat{w{\sf c}_i}= \hat{w}\upharpoonright(n\sim \{i\}$ 

\end{itemize}
\end{definition}
If $\bar a\in {}^{<n-1}n$, we write ${\sf s}_{\bar a}$, or more frequently 
${\sf s}_{a_0\ldots a_{k-1}}$, where $k=|\bar a|$,
for an an arbitary chosen $sc$ word $w$
such that $\hat{w}=\bar a.$ 
$w$  exists and does not 
depend on $w$ by \cite[definition~5.23 ~lemma 13.29]{HHbook}. 
We can, and will assume \cite[Lemma 13.29]{HHbook} 
that $w=s{\sf c}_{n-1}{\sf c}_n.$
[In the notation of \cite[definition~5.23,~lemma~13.29]{HHbook}, 
$\widehat{s_{ijk}}$ for example is the function $n\to n$ taking $0$ to $i,$
$1$ to $j$ and $2$ to $k$, and fixing all $l\in n\setminus\set{i, j,k}$.]
Let $\delta$ be a map. Then $\delta[i\to d]$ is defined as follows. $\delta[i\to d](x)=\delta(x)$
if $x\neq i$ and $\delta[i\to d](i)=d$. We write $\delta_i^j$ for $\delta[i\to \delta_j]$.

\begin{definition}
From now on let $2\leq n<\omega.$ Let $\C$ be an atomic $\CA_{n}$. 
An \emph{atomic  network} over $\C$ is a map
$$N: {}^{n}\Delta\to At\cal C$$ 
such that the following hold for each $i,j<n$, $\delta\in {}^{n}\Delta$
and $d\in \Delta$:
\begin{itemize}
\item $N(\delta^i_j)\leq {\sf d}_{ij}$
\item $N(\delta[i\to d])\leq {\sf c}_iN(\delta)$ 
\end{itemize}
\end{definition}
Note than $N$ can be viewed as a hypergraph with set of nodes $\Delta$ and 
each hyperedge in ${}^{\mu}\Delta$ is labelled with an atom from $\C$.
We call such hyperedges atomic hyperedges.
We write $\nodes(N)$ for $\Delta.$ But it can happen 
let $N$ stand for the set of nodes 
as well as for the function and the network itself. Context will help.

Define $x\sim y$ if there exists $\bar{z}$ such that $N(x,y,\bar{z})\leq {\sf d}_{01}$.
Define an equivalence relation
$\sim$ over the set of all finite sequences over $\nodes(N)$ by $\bar
x\sim\bar y$ iff $|\bar x|=|\bar y|$ and $x_i\sim y_i$ for all
$i<|\bar x|$.

(3) A \emph{ hypernetwork} $N=(N^a, N^h)$ over $\cal C$ 
consists of a network $N^a$
together with a labelling function for hyperlabels $N^h:\;\;^{<
\omega}\!\nodes(N)\to\Lambda$ (some arbitrary set of hyperlabels $\Lambda$)
such that for $\bar x, \bar y\in\; ^{< \omega}\!\nodes(N)$ 
\begin{enumerate}
\renewcommand{\theenumi}{\Roman{enumi}}
\setcounter{enumi}3
\item\label{net:hyper} $\bar x\sim\bar y \Rightarrow N^h(\bar x)=N^h(\bar y)$. 
\end{enumerate}
If $|\bar x|=k\in nats$ and $N^h(\bar x)=\lambda$ then we say that $\lambda$ is
a $k$-ary hyperlabel. $(\bar x)$ is referred to a a $k$-ary hyperedge, or simply a hyperedge.
(Note that we have atomic hyperedges and hyperedges) 
When there is no risk of ambiguity we may drop the superscripts $a,
h$. 

The following notation is defined for hypernetworks, but applies
equally to networks.  

(4) If $N$ is a hypernetwork and $S$ is any set then
$N\restr S$ is the $n$-dimensional hypernetwork defined by restricting
$N$ to the set of nodes $S\cap\nodes(N)$.  For hypernetworks $M, N$ if
there is a set $S$ such that $M=N\restr S$ then we write $M\subseteq
N$.  If $N_0\subseteq N_1\subseteq \ldots $ is a nested sequence of
hypernetworks then we let the \emph{limit} $N=\bigcup_{i<\omega}N_i$  be
the hypernetwork defined by
$\nodes(N)=\bigcup_{i<\omega}\nodes(N_i)$,\/ $N^a(x_0,\ldots x_{n-1})= 
N_i^a(x_0,\ldots x_{n-1})$ if
$x_0\ldots x_{\mu-1}\in\nodes(N_i)$, and $N^h(\bar x)=N_i^h(\bar x)$ if $\rng(\bar
x)\subseteq\nodes(N_i)$.  This is well-defined since the hypernetworks
are nested and since hyperedges $\bar x\in\;^{<\omega}\nodes(N)$ are
only finitely long.

For hypernetworks $M, N$ and any set $S$, we write $M\equiv^SN$
if $N\restr S=M\restr S$.  For hypernetworks $M, N$, 
and any set $S$, we write $M\equiv_SN$ 
if the symmetric difference $\Delta(\nodes(M), \nodes(N))\subseteq S$ and
$M\equiv^{(\nodes(M)\cup\nodes(N))\setminus S}N$. We write $M\equiv_kN$ for
$M\equiv_{\set k}N$.

Let $N$ be a network and let $\theta$ be any function.  The network
$N\theta$ is a complete labelled graph with nodes
$\theta^{-1}(\nodes(N))=\set{x\in\dom(\theta):\theta(x)\in\nodes(N)}$,
and labelling defined by 
$(N\theta)(i_0,\ldots i_{\mu-1}) = N(\theta(i_0), \theta(i_1), \theta(i_{\mu-1}))$,
for $i_0, \ldots i_{\mu-1}\in\theta^{-1}(\nodes(N))$.  Similarly, for a hypernetwork
$N=(N^a, N^h)$, we define $N\theta$ to be the hypernetwork
$(N^a\theta, N^h\theta)$ with hyperlabelling defined by
$N^h\theta(x_0, x_1, \ldots) = N^h(\theta(x_0), \theta(x_1), \ldots)$
for $(x_0, x_1,\ldots) \in \;^{<\omega}\!\theta^{-1}(\nodes(N))$.

Let $M, N$ be hypernetworks.  A \emph{partial isomorphism}
$\theta:M\to N$ is a partial map $\theta:\nodes(M)\to\nodes(N)$ such
that for any $
i_i\ldots i_{\mu-1}\in\dom(\theta)\subseteq\nodes(M)$ we have $M^a(i_1,\ldots i_{\mu-1})= 
N^a(\theta(i), \ldots\theta(i_{\mu-1}))$
and for any finite sequence $\bar x\in\;^{<\omega}\!\dom(\theta)$ we
have $M^h(\bar x) = 
N^h\theta(\bar x)$.  
If $M=N$ we may call $\theta$ a partial isomorphism of $N$.

\begin{definition}\label{def:games} Let $2\leq n<\omega$. For any $\CA_{n}$  
atom structure $\alpha$, and $n\leq m\leq
\omega$, we define two-player games $F_{n}^m(\alpha),$ \; and 
$H_{n}(\alpha)$,
each with $\omega$ rounds, 
and for $m<\omega$ we define $H_{m,n}(\alpha)$ with $n$ rounds.

\begin{itemize}
\item 
Let $m\leq \omega$.  
In a play of $F_{n}^m(\alpha)$ the two players construct a sequence of
networks $N_0, N_1,\ldots$ where $\nodes(N_i)$ is a finite subset of
$m=\set{j:j<m}$, for each $i$.  In the initial round of this game \pa\
picks any atom $a\in\alpha$ and \pe\ must play a finite network $N_0$ with
$\nodes(N_0)\subseteq  n$, 
such that $N_0(\bar{d}) = a$ 
for some $\bar{d}\in{}^{\mu}\nodes(N_0)$.
In a subsequent round of a play of $F_{n}^m(\alpha)$ \pa\ can pick a
previously played network $N$ an index $\l<n$, a ``face" 
$F=\langle f_0,\ldots f_{n-2} \rangle \in{}^{n-2}\nodes(N),\; k\in
m\setminus\set{f_0,\ldots f_{n-2}}$, and an atom $b\in\alpha$ such that 
$b\leq {\sf c}_lN(f_0,\ldots f_i, x,\ldots f_{n-2}).$  
(the choice of $x$ here is arbitrary, 
as the second part of the definition of an atomic network together with the fact
that $\cyl i(\cyl i x)=\cyl ix$ ensures that the right hand side does not depend on $x$).
This move is called a \emph{cylindrifier move} and is denoted
$(N, \langle f_0, \ldots f_{\mu-2}\rangle, k, b, l)$ or simply $(N, F,k, b, l)$.
In order to make a legal response, \pe\ must play a
network $M\supseteq N$ such that 
$M(f_0,\ldots f_{i-1}, k, f_i,\ldots f_{n-2}))=b$ 
and $\nodes(M)=\nodes(N)\cup\set k$.

\pe\ wins $F_{n}^m(\alpha)$ if she responds with a legal move in each of the
$\omega$ rounds.  If she fails to make a legal response in any
round then \pa\ wins.

\item
Fix some hyperlabel $\lambda_0$.  $H_{n}(\alpha)$ is  a 
game the play of which consists of a sequence of
$\lambda_0$-neat hypernetworks 
$N_0, N_1,\ldots$ where $\nodes(N_i)$
is a finite subset of $\omega$, for each $i<\omega$.  
In the initial round \pa\ picks $a\in\alpha$ and \pe\ must play
a $\lambda_0$-neat hypernetwork $N_0$ with nodes contained in
$\mu$ and $N_0(\bar d)=a$ for some nodes $\bar{d}\in {}^{\mu}N_0$.  
At a later stage
\pa\ can make any cylindrifier move $(N, F,k, b, l)$ by picking a
previously played hypernetwork $N$ and $F\in {}^{n-2}\nodes(N), \;l<n,  
k\in\omega\setminus\nodes(N)$ 
and $b\leq {\sf c}_lN(f_0, f_{l-1}, x, f_{n-2})$.  
[In $H_{n}$ we
require that \pa\ chooses $k$ as a `new node', i.e. not in
$\nodes(N)$, whereas in $F_{n}^m$ for finite $m$ it was necessary to allow
\pa\ to `reuse old nodes'. This makes the game easior as far as $\forall$ is concerned.) 
For a legal response, \pe\ must play a
$\lambda_0$-neat hypernetwork $M\equiv_k N$ where
$\nodes(M)=\nodes(N)\cup\set k$ and 
$M(f_0, f_{i-1}, k, f_{n-2})=b$.
Alternatively, \pa\ can play a \emph{transformation move} by picking a
previously played hypernetwork $N$ and a partial, finite surjection
$\theta:\omega\to\nodes(N)$, this move is denoted $(N, \theta)$.  \pe\
must respond with $N\theta$.  Finally, \pa\ can play an
\emph{amalgamation move} by picking previously played hypernetworks
$M, N$ such that $M\equiv^{\nodes(M)\cap\nodes(N)}N$ and
$\nodes(M)\cap\nodes(N)\neq \emptyset$.  
This move is denoted $(M,
N)$.  To make a legal response, \pe\ must play a $\lambda_0$-neat
hypernetwork $L$ extending $M$ and $N$, where
$\nodes(L)=\nodes(M)\cup\nodes(N)$.

Again, \pe\ wins $H_n(\alpha)$ if she responds legally in each of the
$\omega$ rounds, otherwise \pa\ wins. 

\item For $m< \omega$ the game $H_{m,n}(\alpha)$ is similar to $H_n(\alpha)$ but
play ends after $m$ rounds, so a play of $H_{m,n}(\alpha)$ could be
\[N_0, N_1, \ldots, N_m\]
If \pe\ responds legally in each of these
$m$ rounds she wins, otherwise \pa\ wins.
\end{itemize}

\end{definition}

\begin{definition}\label{def:hat}
For $m\geq 5$ and $\c C\in\CA_m$, if $\A\subseteq\Nr_n(\C)$ is an
atomic cylindric algebra and $N$ is an $\A$-network then we define
$\widehat N\in\C$ by
\[\widehat N =
 \prod_{i_0,\ldots i_{n-1}\in\nodes(N)}{\sf s}_{i_0, \ldots i_{n-1}}N(i_0\ldots i_{n-1})\]
$\widehat N\in\C$ depends
implicitly on $\C$.
\end{definition}
We write $\A\subseteq_c \B$ if $\A\in S_c\{\B\}$. 
\begin{lemma}\label{lem:atoms2}
Let $n<m$ and let $\A$ be an atomic $\CA_n$, 
$\A\subseteq_c\Nr_n\C$
for some $\C\in\CA_m$.  For all $x\in\C\setminus\set0$ and all $i_0, \ldots i_{n-1} < m$ there is $a\in\At(\A)$ such that
${\sf s}_{i_0\ldots i_{n-1}}a\;.\; x\neq 0$.
\end{lemma}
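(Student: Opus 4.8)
The plan is to reduce the statement to a density property of the atoms of $\A$ inside the neat reduct, and then to transport a witness back through the substitution operator by passing to its Boolean conjugate, thereby avoiding any appeal to (unavailable) complete additivity of the cylindrifiers of $\C$.

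First I would isolate the only consequence of the hypotheses that is really needed. Since $\A$ is atomic, $\sum^{\A}\At\A=1$, and since $\A\subseteq_c\Nr_n\C$ this supremum is preserved, so $\sum^{\Nr_n\C}\At\A=1$. Unwinding the meaning of a supremum equal to the unit yields
$$(\star)\qquad \text{for every } z\in\Nr_n\C\setminus\{0\}\ \text{ there is } a\in\At\A \text{ with } a\cdot z\neq 0,$$
since otherwise $-z$ would be an upper bound of $\At\A$ strictly below $1$. Note that $(\star)$ uses only that this single supremum is preserved, and nothing about arbitrary infinite joins.

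Next, fix $0\neq x\in\C$ together with $i_0,\dots,i_{n-1}<m$, and write $\sigma$ for the $sc$-word operator ${\sf s}_{i_0\ldots i_{n-1}}$. The key algebraic observation is that $\sigma$, being a finite composition of cylindrifiers $\cyl{k}$ and simple substitutions $\sub{i}{j}$, admits a Boolean conjugate $\breve\sigma$, i.e.\ a term-definable operator with
$$\sigma p\cdot q=0 \iff p\cdot\breve\sigma q=0\qquad(p,q\in\C).$$
Indeed $\cyl{k}$ is self-conjugate (from $\cyl{k}u\cdot v=0\iff u\cdot\cyl{k}v=0$), multiplication by a fixed diagonal is self-conjugate, and $\sub{i}{j}u=\cyl{i}(\diag{i}{j}\cdot u)$ for $i\neq j$, so $\breve\sigma$ is obtained by reversing the word and replacing each letter by its conjugate. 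I then set
$$z=\cyl{n}\cyl{n+1}\cdots\cyl{m-1}\,\breve\sigma x.$$
Because cylindrifiers commute and are idempotent, $\cyl{k}z=z$ for every $k\in m\setminus n$, so $z\in\Nr_n\C$; and since $\sigma 1=1$ (using $\cyl{i}\diag{i}{j}=1$ and $\cyl{k}1=1$) one gets $\breve\sigma x\neq 0$ whenever $x\neq0$, whence $z\neq 0$.

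Finally I would apply $(\star)$ to $z$ to obtain $a\in\At\A$ with $a\cdot z\neq 0$. As $a\in\At\A\subseteq\Nr_n\C$ we have $\cyl{k}a=a$ for all $k\in m\setminus n$, so the self-conjugacy of each $\cyl{k}$ lets me strip the outer cylindrifiers one at a time, giving $a\cdot\breve\sigma x\neq 0$; by the conjugacy of $\sigma$ this is precisely ${\sf s}_{i_0\ldots i_{n-1}}a\cdot x\neq 0$, as required. The step I expect to be most delicate is the construction of $z$: one must simultaneously guarantee that $z$ lands in $\Nr_n\C$ and that nothing meeting the atoms is lost, which is exactly why the argument passes to the conjugate and then re-cylindrifies over $m\setminus n$ instead of trying to distribute $\sigma$ over the (infinite) join $\sum\At\A$. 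Verifying that $\breve\sigma$ is genuinely conjugate to the chosen word, and that the identities used are insensitive to the choice of word representing ${\sf s}_{i_0\ldots i_{n-1}}$ on neat-reduct elements, is the only part requiring real care.
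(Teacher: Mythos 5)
Your proof is correct, and it takes a route that is dual to --- and in one respect tighter than --- the paper's. The paper argues \emph{forwards}: it first normalizes ${\sf s}_{i_0\ldots i_{n-1}}$ to a word consisting of substitutions only (the trailing cylindrifiers act as the identity on elements of $\A\subseteq\Nr_n\C$), then invokes complete additivity of each ${\sf s}^i_j$ to distribute the operator over the join of the atoms, obtaining $\sum\{{\sf s}_{i_0\ldots i_{n-1}}a : a\in\At(\A)\}={\sf s}_{i_0\ldots i_{n-1}}1=1$, and finishes exactly as you do, with the upper-bound/disjointness argument. Note, however, that for this distribution to be meaningful the join $\sum\At(\A)$ must equal $1$ \emph{as computed in $\C$}, whereas the hypothesis $\A\subseteq_c\Nr_n\C$ only gives this in $\Nr_n\C$; the transfer from $\Nr_n\C$ to $\C$ is left implicit in the paper, and it is proved by precisely the kind of conjugation-plus-cylindrification argument you employ. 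Your \emph{backwards} route --- pull $x$ through the term-definable conjugate $\breve\sigma$, apply ${\sf c}_n\cdots{\sf c}_{m-1}$ to land in $\Nr_n\C$, meet an atom of $\A$ there using only the density guaranteed by $\subseteq_c$, then strip the cylindrifiers by self-conjugacy --- uses entirely finitary identities and no infinite joins in $\C$ at all, so it makes the paper's implicit step explicit and is self-contained. One quibble: your parenthetical claim that complete additivity of the cylindrifiers of $\C$ is ``unavailable'' is inaccurate --- cylindrifiers are self-conjugate, hence completely additive, in any $\CA_m$, and likewise the substitutions ${\sf s}^i_j$, which is what the paper's proof rests on --- but nothing in your argument depends on that remark.
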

\begin{proof}
We can assume, see definition  \ref{def:string}, 
that ${\sf s}_{i_0,\ldots i_{n-1}}$ consists only of substitutions, since ${\sf c}_{m}\ldots {\sf c}_{m-1}\ldots 
{\sf c}_nx=x$ 
for every $x\in \A$.We have ${\sf s}^i_j$ is a
completely additive operator (any $i, j$), hence ${\sf s}_{i_0,\ldots i_{\mu-1}}$ 
is too  (see definition~\ref{def:string}).
So $\sum\set{{\sf s}_{i_0\ldots i_{n-1}}a:a\in\At(\A)}={\sf s}_{i_0\ldots i_{n-1}}
\sum\At(\A)={\sf s}_{i_0\ldots i_{n-1}}1=1$,
for any $i_0,\ldots i_{n-1}<n$.  Let $x\in\C\setminus\set0$.  It is impossible
that ${\sf s}_{i_0\ldots i_{n-1}}\;.\;x=0$ for all $a\in\At(\c A)$ because this would
imply that $1-x$ was an upper bound for $\set{{\sf s}_{i_0\ldots i_{n-1}}a:
a\in\At(\A)}$, contradicting $\sum\set{{\sf s}_{i_0\ldots i_{n-1}}a :a\in\At(\c A)}=1$.
\end{proof}

We now prove two Theorems relating neat embeddings
to the games we defined:

\begin{theorem}\label{thm:n}
Let $n<m$, and let $\A$ be a $\CA_m$.  
If $\A\in{\bf S_c}\Nr_{n}\CA_m, $
then \pe\ has a \ws\ in $F^m(\At\A)$. In particular if $\A$ is $CR$ then \pe has a \ws in $F^{\omega}(\At\A)$
\end{theorem}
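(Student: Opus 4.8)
The plan is for \pe\ to play throughout so as to preserve a single invariant, namely that $\widehat{N}\neq 0$ in $\C$, where $\C\in\CA_m$ witnesses $\A\in{\bf S_c}\Nr_n\CA_m$ (so that $\A\subseteq_c\Nr_n\C$) and $\widehat{N}$ is the element of $\C$ attached to the current network $N$ as in Definition~\ref{def:hat}. First I would note that this invariant already guarantees \pe\ never makes an illegal move: by the standard properties of the generalized substitution operators, any labelling $N$ with $\widehat{N}\neq 0$ automatically satisfies the two defining clauses $N(\delta^i_j)\leq{\sf d}_{ij}$ and $N(\delta[i\to d])\leq{\sf c}_iN(\delta)$ of an atomic network, so $N$ is a genuine network; it then remains only to show \pe\ can keep answering while preserving $\widehat{N}\neq 0$.

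In the opening round \pa\ chooses $a\in\At\A$ and \pe\ responds with the network $N_0$ on nodes $\{0,\ldots,n-1\}$ having $N_0(0,1,\ldots,n-1)=a$. Since the substitution ${\sf s}_{0,1,\ldots,n-1}$ is the identity operator, the corresponding partial product equals $a\neq 0$; \pe\ then defines $N_0$ on the remaining tuples $\bar i\in{}^n\{0,\ldots,n-1\}$ one at a time, at each stage applying Lemma~\ref{lem:atoms2} with $x$ the running product to choose an atom $N_0(\bar i)$ with ${\sf s}_{\bar i}N_0(\bar i)\cdot x\neq 0$. The product stays nonzero throughout, so $\widehat{N_0}\neq 0$.

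The substantive case is a cylindrifier move $(N,\langle f_0,\ldots,f_{n-2}\rangle,k,b,l)$, where $b\leq{\sf c}_lN(f_0,\ldots,x,\ldots,f_{n-2})$ with $x$ in position $l$, and \pe\ must supply $M\supseteq N$ on $\nodes(N)\cup\{k\}$ labelling the new edge $\bar e=(f_0,\ldots,f_{l-1},k,f_l,\ldots,f_{n-2})$ by $b$. Writing $\widehat{M}=\widehat{N}\cdot\prod_{\bar i}{\sf s}_{\bar i}M(\bar i)$, the product ranging over the new tuples mentioning $k$, the first thing to secure is the crux inequality $\widehat{N}\cdot{\sf s}_{\bar e}b\neq 0$. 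To get it, take $x=p$ an existing node of $N$ and put $c=N(f_0,\ldots,p,\ldots,f_{n-2})$, so that $b\leq{\sf c}_lc$ while ${\sf s}_{(f_0,\ldots,p,\ldots,f_{n-2})}c$ is a factor of $\widehat{N}$; pushing the cylindrifier ${\sf c}_k$ of the spare dimension $k$ (which fixes $\widehat{N}$) through the product and using the commutation laws between ${\sf c}_l$, the substitutions, and the inequality $b\leq{\sf c}_lc$, one checks that ${\sf s}_{\bar e}b$ cannot annihilate $\widehat{N}$. Once $\widehat{N}\cdot{\sf s}_{\bar e}b\neq 0$, \pe\ labels the remaining new tuples mentioning $k$ one by one, again invoking Lemma~\ref{lem:atoms2} on the running product, and arrives at $M$ with $\widehat{M}\neq 0$, restoring the invariant.

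I expect this crux inequality to be the main obstacle, since it is the one step that genuinely exploits the neat-embedding hypothesis $\A\subseteq_c\Nr_n\C$ with $m>n$: it is precisely the extra dimension $k<m$ supplied by $\C\in\CA_m$ that makes room for the new node, rather than mere atomicity of $\A$. The one further delicate point, peculiar to the finite game $F^m(\At\A)$ of Definition~\ref{def:games} in which \pa\ may reuse a node $k\in\nodes(N)$, is handled exactly as in the treatment we follow \cite{R}, by checking that the invariant keeps the already-present data on $\bar e$ compatible with the demanded value $b$. Finally, for the ``in particular'' clause: if $\A$ is completely representable then it has the complete neat embedding property, i.e. $\A\in{\bf S_c}\Nr_n\CA_\omega$, so applying the theorem with $m=\omega$ yields a \ws\ for \pe\ in $F^\omega(\At\A)$.
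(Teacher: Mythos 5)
You follow the same route as the paper: \pe\ maintains the invariant $\widehat{N}\neq 0$, computed in a witnessing $\C\in\CA_m$ with $\A\subseteq_c\Nr_n\C$, uses Lemma~\ref{lem:atoms2} to choose atomic labels for the remaining tuples one at a time, and gets the ``in particular'' clause from the complete neat embedding property. The gap is in your crux inequality, and it sits exactly where the finite game $F^m$ differs from the infinite one: \emph{reuse of nodes}. In $F^m$ (Definition~\ref{def:games}, $m<\omega$) \pa\ may pick $k\in\nodes(N)$; the only restriction is $k\notin\{f_0,\ldots,f_{n-2}\}$. In that case \pe's response must \emph{overwrite} the labels of all tuples through $k$ (the clause ``$M\supseteq N$'' in the definition is to be read as agreement off $k$, which is how the paper's proof treats it). Then your decomposition $\widehat{M}=\widehat{N}\cdot\prod_{\bar i}{\sf s}_{\bar i}M(\bar i)$ is unavailable, your parenthetical claim that ${\sf c}_k$ fixes $\widehat{N}$ is false (it holds only for fresh $k$), and the inequality $\widehat{N}\cdot{\sf s}_{\bar e}b\neq 0$ fails in general. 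Concretely: suppose $N$ already labels the tuple in question, $N(f_0,\ldots,f_{l-1},k,f_l,\ldots,f_{n-2})=b'$ with $b'\neq b$; such a move is legal for \pa\ whenever ${\sf c}_lN(f_0,\ldots,x,\ldots,f_{n-2})$ dominates more than one atom. Since elements of $\A$ are $n$-dimensional in $\C$, the word ${\sf s}_{\bar e}$ acts on them as a string of substitutions (as noted in the proof of Lemma~\ref{lem:atoms2}), hence as a Boolean endomorphism, so ${\sf s}_{\bar e}b\cdot{\sf s}_{\bar e}b'={\sf s}_{\bar e}(b\cdot b')=0$; and since ${\sf s}_{\bar e}b'$ is a factor of $\widehat{N}$, we get $\widehat{N}\cdot{\sf s}_{\bar e}b=0$ even though \pe\ has a perfectly good reply. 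Your closing remark that reuse is handled ``by checking that the invariant keeps the already-present data on $\bar e$ compatible with the demanded value $b$'' is the wrong repair: the already-present data need not be compatible with $b$; it has to be discarded, not verified.

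The paper discards it algebraically by cylindrifying first: the crux is ${\sf c}_k\widehat{N}\cdot{\sf s}_{\bar a}b\neq 0$, where ${\sf c}_k$ erases precisely the constraints of $N$ on the reused node $k$ (and is the identity on $\widehat{N}$ when $k$ is fresh, so your case is subsumed); the new invariant comes from $\widehat{M}\leq{\sf c}_k\widehat{N}\cdot{\sf s}_{\bar a}b$ with Lemma~\ref{lem:atoms2} filling in the other tuples through $k$, exactly as you describe for the fresh-node case. Note that reuse is not a peripheral case here: the theorem is applied with $m=n+2$, where \pa's winning strategy against the rainbow algebra consists entirely of reusing nodes, so an argument covering only fresh $k$ proves a weaker statement than the one needed.
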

\begin{proof}
If $\A\subseteq\Nr_n\C$ for some $\C\in\CA_m$ then \pe\ always
plays hypernetworks $N$ with $\nodes(N)\subseteq n$ such that
$\widehat N\neq 0$. In more detail, in the initial round , let $\forall$ play $a\in \At \cal A$.
$\exists$ play a network $N$ with $N(0, \ldots n-1)=a$. Then $\widehat N=a\neq 0$.
At a later stage suppose $\forall$ plays the cylindrifier move 
$(N, \langle f_0, \ldots f_{\mu-2}\rangle, k, b, l)$ 
by picking a
previously played hypernetwork $N$ and $f_i\in \nodes(N), \;l<\mu,  k\notin \{f_i: i<n-2\}$, 
and $b\leq {\sf c}_lN(f_0,\ldots  f_{i-1}, x, f_{n-2})$.
Let $\bar a=\langle f_0\ldots f_{l-1}, k\ldots f_{n-2}\rangle.$
Then ${\sf c}_k\widehat N\cdot {\sf s}_{\bar a}b\neq 0$.
Then there is a network  $M$ such that
$\widehat{M}.\widehat{{\sf c}_kN}\cdot {\sf s}_{\bar a}b\neq 0$. Hence 
$M(f_0,\dots  k, f_{n-2})=b.$
\end{proof}

\begin{theorem}\label{thm:RaC}
Let $\alpha$ be a countable 
$\CA_n$ atom structure.  If \pe\ has a \ws\ in $H_n(\alpha),$ then
there is a representable cylindric algebra $\C$ of
dimension $\omega$ such that $\Nr_n\c C$ is atomic 
and $\At \Nr_n\C\cong\alpha$.
\end{theorem}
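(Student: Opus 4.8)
The plan is to convert the winning strategy of \pe\ into a countable, \emph{saturated} family of $\lambda_0$-neat hypernetworks, to patch that family into a single complete labelling of an $\omega$-dimensional base structure, and then to read off $\C$ as the cylindric set algebra generated by the atoms of $\alpha$ inside a weak-space power of the base. Representability of $\C$ as a $\CA_{\omega}$ is then automatic, since it is a set algebra; the whole content lies in showing that the $n$-dimensional part $\Nr_n\C$ is atomic with atom structure exactly $\alpha$.

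First I would build the saturated family. Fix the winning strategy $\sigma$ of \pe\ in $H_n(\alpha)$. Since $\alpha$ is countable and nodes range over $\omega$, there are only countably many potential demands of \pa: cylindrifier moves $(N,F,k,b,l)$, transformation moves $(N,\theta)$ with $\theta:\omega\to\nodes(N)$ a finite partial surjection, and amalgamation moves $(M,N)$. Using a dovetailing bookkeeping that revisits every demand arising from any hypernetwork generated so far, \pa\ would schedule all these demands over the $\omega$ rounds while \pe\ answers by $\sigma$. Because $\sigma$ is winning, every response is a legal $\lambda_0$-neat hypernetwork, and the collection $H$ of all hypernetworks played is a countable set that is (a)~closed under subhypernetworks, (b)~cylindrifier-saturated, so every demand $b\le\cyl lN(\dots)$ has a witness $M\equiv_kN$ in $H$ adjoining $k$ with the prescribed label, (c)~closed under transformation moves $N\mapsto N\theta$, and (d)~amalgamation-closed: any $M,N\in H$ agreeing on common nodes have a common extension $L\in H$. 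These four closure properties are exactly the game moves, so they hold by construction.

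Next I would assemble $H$ into one structure. Taking the base $U=\omega$ and using (c) and (d), the partial labellings of the members of $H$ cohere: any two hypernetworks containing a given atomic hyperedge $(u_0,\dots,u_{n-1})$ assign it the same atom, so there is a single total atomic labelling $N^a:{}^nU\to\At\alpha$ together with a coherent hyperlabelling extending all members of $H$. Let $V$ be the weak space $\{s\in{}^{\omega}U:|\{i:s_i\ne p_i\}|<\omega\}$ for a fixed $p$, and for each $a\in\At\alpha$ put
\[
\hat a=\{s\in V:N^a(s_0,\dots,s_{n-1})=a\}.
\]
Let $\C$ be the subalgebra of the full weak set algebra on $V$ generated by $\{\hat a:a\in\At\alpha\}$ under the $\CA_{\omega}$ operations; then $\C$ is a representable $\CA_{\omega}$. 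The network conditions $N(\delta^i_j)\le\diag ij$ and $N(\delta[i\to d])\le\cyl iN(\delta)$, transported to the total labelling, show that diagonals and cylindrifiers act on the $\hat a$ exactly as the corresponding atom-structure operations of $\alpha$ act on the atoms, so $a\mapsto\hat a$ is a well-defined embedding of atom structures; saturation (b) together with completeness of the labelling shows the $\hat a$ are pairwise disjoint, nonzero, and join to the $n$-dimensional unit.

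I expect the main obstacle to be twofold. The saturation step is delicate: one must arrange a single $\omega$-round bookkeeping that discharges all cylindrifier, transformation, and amalgamation demands as they are generated, while keeping every hypernetwork consistent with $\sigma$; the amalgamation closure (d) is the crucial ingredient, since it is precisely what makes the global labelling well-defined. The second, harder point is the verification that $\Nr_n\C$ is \emph{atomic} with atoms exactly the $\hat a$ — rather than merely containing $\alpha$ as a set of atoms. Here one argues, in the spirit of Lemma~\ref{lem:atoms2}, that every nonzero $n$-dimensional element of $\C$ meets some $\hat a$; the hyperlabels, constrained by condition~\ref{net:hyper} and the $\lambda_0$-neat requirement, are what forbid any spurious $n$-dimensional element from lying strictly below an $\hat a$, so each $\hat a$ is an atom of $\Nr_n\C$ and these exhaust $\At\Nr_n\C$, giving $\At\Nr_n\C\cong\alpha$.
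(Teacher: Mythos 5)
Your proposal contains a step that fails outright: the ``patching'' of the family $H$ into a single total labelling $N^a:{}^n\omega\to\alpha$. The hypernetworks occurring in a play of $H_n(\alpha)$ do \emph{not} cohere on common nodes. A transformation move $(N,\theta)$ forces \pe\ to play $N\theta$, whose label on a tuple $(i_0,\ldots,i_{n-1})$ is $N(\theta(i_0),\ldots,\theta(i_{n-1}))$; for a non-identity $\theta$ this deliberately conflicts with $N$'s own label on the same nodes. Amalgamation moves cannot repair this, because \pa\ may only play $(M,N)$ when $M\equiv^{\nodes(M)\cap\nodes(N)}N$, i.e.\ when the two hypernetworks \emph{already} agree on their intersection; the move never forces agreement between arbitrary members of $H$. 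A second structural problem: a single play has a single initial move, so only atoms reachable from the initial atom via cylindrifier demands ever occur as labels; if $\alpha$ is ``disconnected'' (e.g.\ the atom structure of a direct product), some $\hat a$ would be empty and $\At\Nr_n\C\not\cong\alpha$. The paper avoids both problems by running one scheduled play \emph{per atom} $a\in\alpha$, arranging the responses into a genuinely nested sequence with limit $N_a$, building from each $N_a$ a relativized definable-set algebra $\c D_a$ on a weak unit $U_a$, and taking $\C=\prod_{a\in\alpha}\c D_a$; coherence across different atoms is never needed because the components live in disjoint factors.

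The second gap is that the heart of the theorem --- atomicity of $\Nr_n\C$ --- is not argued. Your appeal to Lemma~\ref{lem:atoms2} points the wrong way: that lemma concerns algebras already known to lie in ${\bf S_c}\Nr_n\CA_m$ and is used for Theorem~\ref{thm:n}, not here; and ``the hyperlabels forbid spurious $n$-dimensional elements'' is a hope, not a mechanism. What the paper actually does is: (i) schedule the transformation and amalgamation moves so that the limit $N_a$ has the extension property (\ref{eq:theta}) for finite partial isomorphisms; (ii) prove by induction on quantifier depth the invariance property (\ref{eq:bf}), that satisfaction of $L$-formulas at assignments in $U_a$ depends only on the partial-isomorphism type of the assignment; (iii) given a nonzero $x\in\Nr_n\C$ with nonzero component $\phi^{\c D_a}$, pick $f\in\phi^{\c D_a}$, set $b=N_a(f(0),\ldots,f(n-1))$, and show $b(x_0,\ldots,x_{n-1})^{\c D_a}\subseteq\phi^{\c D_a}$ by extending the map $f(i)\mapsto g(i)$ via (\ref{eq:theta}), applying (\ref{eq:bf}), and then using the fact that $\phi^{\c D_a}$ is an $n$-dimensional element (so insensitive to the finitely many coordinates $\geq n$ where $g'$ and $g$ differ). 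Without (\ref{eq:theta}) and (\ref{eq:bf}) --- which is exactly where the hyperlabels, $\lambda_0$-neatness, and the amalgamation moves earn their keep --- there is no way to conclude that every nonzero $n$-dimensional element sits above some $\hat a$, which is what atomicity and $\At\Nr_n\C\cong\alpha$ require.
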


\begin{proof} 
We shall construct a generalized atomic weak set algebra of dimension $\omega$ such that the atom 
structure of its full neat reduct is isomorphic to
the given atom structure. Suppose \pe\ has a \ws\ in $H_n(\alpha)$. Fix some $a\in\alpha$. We can define a
nested sequence $N_0\subseteq N_1\ldots$ of hypernetworks
where $N_0$ is \pe's response to the initial \pa-move $a$, requiring that
\begin{enumerate}
\item If $N_r$ is in the sequence and 
and $b\leq {\sf c}_lN_r(\langle f_0, f_{n-2}\rangle\ldots , x, f_{n-2})$.  
then there is $s\geq r$ and $d\in\nodes(N_s)$ such 
that $N_s(f_0, f_{i-1}, d, f_{n-2})=b$.
\item If $N_r$ is in the sequence and $\theta$ is any partial
isomorphism of $N_r$ then there is $s\geq r$ and a
partial isomorphism $\theta^+$ of $N_s$ extending $\theta$ such that
$\rng(\theta^+)\supseteq\nodes(N_r)$.
\end{enumerate}
We can schedule these requirements
to extend so that eventually, every requirement gets dealt with.
If we are required to find $k$ and $N_{r+1}\supset N_r$
such that 
$N_{r+1}(f_0, k, f_{n-2})=b$ then let $k\in \omega\setminus \nodes(N_r)$
where $k$ is the least possible for definiteness, 
and let $N_{r+1}$ be \pe's response using her \ws, 
to the \pa move $N_r, (f_0,\ldots f_{n-1}), k, b, l).$
For an extension of type 2, let $\tau$ be a partial isomorphism of $N_r$
and let $\theta$ be any finite surjection onto a partial isomorphism of $N_r$ such that 
$dom(\theta)\cap nodes(N_r)= dom\tau$. \pe's response to \pa's move $(N_r, \theta)$ is necessarily 
$N\theta.$ Let $N_{r+1}$ be her response, using her wining strategy, to the subsequent \pa 
move $(N_r, N_r\theta).$ 

Now let $N_a$ be the limit of this sequence.
This limit is well-defined since the hypernetworks are nested.  

Let $\theta$ be any finite partial isomorphism of $N_a$ and let $X$ be
any finite subset of $\nodes(N_a)$.  Since $\theta, X$ are finite, there is
$i<\omega$ such that $\nodes(N_i)\supseteq X\cup\dom(\theta)$. There
is a bijection $\theta^+\supseteq\theta$ onto $\nodes(N_i)$ and $j\geq
i$ such that $N_j\supseteq N_i, N_i\theta^+$.  Then $\theta^+$ is a
partial isomorphism of $N_j$ and $\rng(\theta^+)=\nodes(N_i)\supseteq
X$.  Hence, if $\theta$ is any finite partial isomorphism of $N_a$ and
$X$ is any finite subset of $\nodes(N_a)$ then
\begin{equation}\label{eq:theta}
\exists \mbox{ a partial isomorphism $\theta^+\supseteq \theta$ of $N_a$
 where $\rng(\theta^+)\supseteq X$}
\end{equation}
and by considering its inverse we can extend a partial isomorphism so
as to include an arbitrary finite subset of $\nodes(N_a)$ within its
domain.
Let $L$ be the signature with one $\mu$ -ary predicate symbol ($b$) for
each $b\in\alpha$, and one $k$-ary predicate symbol ($\lambda$) for
each $k$-ary hyperlabel $\lambda$.  

 
For fixed $f_a\in\;^\omega\!\nodes(N_a)$, let
$U_a=\set{f\in\;^\omega\!\nodes(N_a):\set{i<\omega:g(i)\neq
f_a(i)}\mbox{ is finite}}$.
Notice that $U_a$ is weak unit ( a set of sequences agrreing cofinitely with a fixed one)


We can make $U_a$ into the base of an $L$ relativized structure $\c N_a$.
Satisfiability for $L$ formluas at assignments $f\in U_a$ is defined the usual Tarskian way.

For $b\in\alpha,\;
l_0, \ldots l_{\mu-1}, i_0 \ldots, i_{k-1}<\omega$, \/ $k$-ary hyperlabels $\lambda$,
and all $L$-formulas $\phi, \psi$, let
\begin{eqnarray*}
\c N_a, f\models b(x_{l_0}\ldots  x_{n-1})&\iff&N_a(f(l_0),\ldots  f(l_{n-1}))=b\\
\c N_a, f\models\lambda(x_{i_0}, \ldots,x_{i_{k-1}})&\iff&  N_a(f(i_0), \ldots,f(i_{k-1}))=\lambda\\
\c N_a, f\models\neg\phi&\iff&\c N_a, f\not\models\phi\\
\c N_a, f\models (\phi\vee\psi)&\iff&\c N_a,  f\models\phi\mbox{ or }\c N_a, f\models\psi\\
\c N_a, f\models\exists x_i\phi&\iff& \c N_a, f[i/m]\models\phi, \mbox{ some }m\in\nodes(N_a)
\end{eqnarray*}
For any $L$-formula $\phi$, write $\phi^{\c N_a}$ for the set of assighnments satisfying it; that is
$\set{f\in\;^\omega\!\nodes(N_a): \c N_a, f\models\phi}$.  Let
$D_a = \set{\phi^{\c N_a}:\phi\mbox{ is an $L$-formula}}.$ 
Then this is the universe of the following weak set algebra 
\[\c D_a=(D_a,  \cup, \sim, {\sf D}_{ij}, {\sf C}_i)_{ i, j<\omega}\] 
then 
$\c D_a\in\RCA_\omega$. (Weak set algebras are representable).

Let $\phi(x_{i_0}, x_{i_1}, \ldots, x_{i_k})$ be an arbitrary
$L$-formula using only variables belonging to $\set{x_{i_0}, \ldots,
x_{i_k}}$.  Let $f, g\in U_a$ (some $a\in \alpha$) and suppose
is a partial isomorphism of $N_a$.  We can prove by induction over the
quantifier depth of $\phi$ and using (\ref{eq:theta}), that
\begin{equation}
\c N_a, f\models\phi\iff \c N_a,
g\models\phi\label{eq:bf}\end{equation} 

Let $\c C=\prod_{a\in
\alpha}\c \c D_a$.  Then  $\c C\in\RCA_\omega$, and $\c C$ is the desired generalized weak set algebra.
Note that unit of $\c C$ is the disjoint union of the weak spaces.
We set out to prove our claim. 
We shall show that $\alpha\cong \At\Nr_n\c C.$

An element $x$ of $\c C$ has the form
$(x_a:a\in\alpha)$, where $x_a\in\c D_a$.  For $b\in\alpha$ let
$\pi_b:\c C\to\c \c D_b$ be the projection defined by
$\pi_b(x_a:a\in\alpha) = x_b$.  Conversely, let $\iota_a:\c D_a\to \c
C$ be the embedding defined by $\iota_a(y)=(x_b:b\in\alpha)$, where
$x_a=y$ and $x_b=0$ for $b\neq a$.  Evidently $\pi_b(\iota_b(y))=y$
for $y\in\c D_b$ and $\pi_b(\iota_a(y))=0$ if $a\neq b$.

Suppose $x\in\Nr_{\mu}\c C\setminus\set0$.  Since $x\neq 0$, 
it must have a non-zero component  $\pi_a(x)\in\c D_a$, for some $a\in \alpha$.  
Say $\emptyset\neq\phi(x_{i_0}, \ldots, x_{i_k})^{\c
 D_a}= \pi_a(x)$ for some $L$-formula $\phi(x_{i_0},\ldots, x_{i_k})$.  We
 have $\phi(x_{i_0},\ldots, x_{i_k})^{\c D_a}\in\Nr_{\mu}\c D_a)$.  Pick
 $f\in \phi(x_{i_0},\ldots, x_{i_k})^{\c D_a}$ and let $b=N_a(f(0),
 f(1), \ldots f_{n-1})\in\alpha$.  We will show that 
$b(x_0, x_1, \ldots x_{n-1})^{\c D_a}\subseteq
 \phi(x_{i_0},\ldots, x_{i_k})^{\c D_a}$.  Take any $g\in
b(x_0, x_1\ldots x_{n-1})^{\c D_a}$, 
so $N_a(g(0), g(1)\ldots g(n-1))=b$.  The map $\set{(f(0),
g(0)), (f(1), g(1))\ldots (f(n-1), g(n-1))}$ is 
a partial isomorphism of $N_a$.  By
 (\ref{eq:theta}) this extends to a finite partial isomorphism
 $\theta$ of $N_a$ whose domain includes $f(i_0), \ldots, f(i_k)$. Let
 $g'\in U_a$ be defined by
\[ g'(i) =\left\{\begin{array}{ll}\theta(i)&\mbox{if }i\in\dom(\theta)\\
g(i)&\mbox{otherwise}\end{array}\right.\] By (\ref{eq:bf}), $\c N_a,
g'\models\phi(x_{i_0}, \ldots, x_{i_k})$. Observe that
$g'(0)=\theta(0)=g(0)$ and similarly $g'(n-1)=g(n-1)$, so $g$ is identical
to $g'$ over $\mu$ and it differs from $g'$ on only a finite
set of coordinates.  Since $\phi(x_{i_0}, \ldots, x_{i_k})^{\c
\ D_a}\in\Nr_{\mu}(\c C)$ we deduce $\c N_a, g \models \phi(x_{i_0}, \ldots,
x_{i_k})$, so $g\in\phi(x_{i_0}, \ldots, x_{i_k})^{\c D_a}$.  This
proves that $b(x_0, x_1\ldots x_{\mu-1})^{\c D_a}\subseteq\phi(x_{i_0},\ldots,
x_{i_k})^{\c D_a}=\pi_a(x)$, and so 
$$\iota_a(b(x_0, x_1,\ldots x_{n-1})^{\c \ D_a})\leq
\iota_a(\phi(x_{i_0},\ldots, x_{i_k})^{\c D_a})\leq x\in\c
C\setminus\set0.$$  Hence every non-zero element $x$ of $\Nr_{n}\c C$ 
is above
a an atom $\iota_a(b(x_0, x_1\ldots n_1)^{\c D_a})$ (some $a, b\in
\alpha$) of $\Nr_{n}\c C$.  So
$\Nr_{n}\c C$ is atomic and $\alpha\cong\At\Nr_{n}\c C$ --- the isomorphism
is $b \mapsto (b(x_0, x_1,\dots x_{n-1})^{\c D_a}:a\in A)$.
\end{proof}

We can use such games to show that for $n\geq 3$, there is a representable $\A\in \CA_n$ 
with atom structure $\alpha$ such that $\forall$ can win the game $F^{n+2}(\alpha)$.
However \pe\ has a \ws\ in $H_n(\alpha)$, for any $n<\omega$.
It will follow that there a countable cylindric algebra $\c A'$ such that $\c A'\equiv\c
A$ and \pe\ has a \ws\ in $H(\c A')$.
So let $K$ be any class such that $\Nr_n\CA_{\omega}\subseteq K\subseteq S_c\Nr_n\CA_{n+2}$.
$\c A'$ must belong to $\Nr_n(\RCA_\omega)$, hence $\c A'\in K$.  But $\c A\not\in K$
and $\c A\preceq\c A'$. Thus $K$ is not elementary. From this it easily follows that the class of completely representable cylindric algebras
is not elementary, and that the class $\Nr_n\CA_{n+k}$ for any $k\geq 0$ is not elementary either. 
Furthermore the constructions works for many variants of cylindric algebras
like Halmos' polyadic equality algebras and Pinter's substitution algebras.
Formally we shall prove:
\begin{theorem}\label{r} Let $3\leq n<\omega$. Then the following hold:
\begin{enumroman}
\item Any $K$ such that $\Nr_n\CA_{\omega}\subseteq K\subseteq S_c\Nr_n\CA_{n+2}$ is not elementary.
\item The inclusions $\Nr_n\CA_{\omega}\subseteq S_c\Nr_n\CA_{\omega}\subseteq S\Nr_n\CA_{\omega}$ are all proper
\end{enumroman}
\end{theorem}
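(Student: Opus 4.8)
The plan is to exhibit, for every $K$ in the given interval, two elementarily equivalent algebras, one inside $K$ and one outside; since an elementary class is closed under elementary equivalence, this shows at one stroke that no such $K$ is elementary. The sandwich $\Nr_n\CA_\omega\subseteq K\subseteq S_c\Nr_n\CA_{n+2}$ is what makes this uniform in $K$: to keep an algebra \emph{out} of $K$ it suffices to keep it out of the larger class $S_c\Nr_n\CA_{n+2}$, and to put one \emph{into} $K$ it suffices to put it into the smaller class $\Nr_n\CA_\omega$ (indeed $\Nr_n\RCA_\omega$). Theorems~\ref{thm:n} and~\ref{thm:RaC} are precisely the two devices detecting these memberships through the games $F^{n+2}$ and $H_n$.

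First I would build the ``bad'' algebra. Following \cite{R} and \cite[Ch.~13]{HHbook}, but replacing relation-algebra composition moves by the $n$-dimensional cylindrifier moves of Definition~\ref{def:games}, I would construct an \emph{uncountable} atomic $\A\in\RCA_n$ whose atom structure $\alpha=\At\A$ is a rainbow/Monk-style structure over a suitable uncountable relational base, engineered so that \pa\ has a \ws\ in $F^{n+2}(\alpha)$ while \pe\ has a \ws\ in every $m$-round game $H_{m,n}(\alpha)$ (and in $H_n(\alpha)$ itself). By the contrapositive of Theorem~\ref{thm:n}, \pa's win in $F^{n+2}(\alpha)$ gives $\A\notin S_c\Nr_n\CA_{n+2}$, hence $\A\notin K$. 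The uncountability is forced: were $\A$ countable, Theorem~\ref{thm:RaC} applied to \pe's win in $H_n(\alpha)$ would produce $\C\in\RCA_\omega$ with $\At\Nr_n\C\cong\alpha$, and $\Nr_n\C\in\Nr_n\CA_\omega\subseteq S_c\Nr_n\CA_{n+2}$ would contradict \pa's win in $F^{n+2}(\alpha)$.

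Next I would produce the ``good'' companion $\A'$ by descending to a countable elementary subalgebra. For each finite $m$ the assertion ``\pe\ has a \ws\ in $H_{m,n}$'' is, as in \cite{HHbook}, a first order property of the algebra (the play lasts only $m$ rounds, so the existence of \pe's finitely many responding hypernetworks is captured by a single sentence $\sigma_m$), and $\A\models\sigma_m$ for all $m$. By the downward \ls\ theorem pick a countable $\A'\preceq\A$; then $\A'\equiv\A$ and $\A'\models\sigma_m$ for every $m$, so \pe\ wins each $H_{m,n}(\At\A')$. Because $\A'$ is now \emph{countable}, a step-by-step (back-and-forth) construction upgrades these finite-round wins to a \ws\ for \pe\ in the full $\omega$-round game $H_n(\At\A')$; this is exactly where countability is indispensable. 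Theorem~\ref{thm:RaC} then yields $\C\in\RCA_\omega$ with $\Nr_n\C$ atomic and $\At\Nr_n\C\cong\At\A'$, and identifying $\A'$ with this neat reduct gives $\A'\in\Nr_n\RCA_\omega\subseteq K$. As $\A\equiv\A'$ with $\A\notin K$ and $\A'\in K$, the class $K$ is not closed under elementary equivalence, proving (i).

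For (ii) I would read off both strict inclusions from the same material. The inclusion $S_c\Nr_n\CA_\omega\subseteq S\Nr_n\CA_\omega=\RCA_n$ is proper because the algebra $\A$ of part~(i) is representable, so $\A\in\RCA_n$, yet $\A\notin S_c\Nr_n\CA_{n+2}\supseteq S_c\Nr_n\CA_\omega$. For $\Nr_n\CA_\omega\subseteq S_c\Nr_n\CA_\omega$ I would use that neat reducts are not closed under complete subalgebras: from a genuine atomic neat reduct $\B=\Nr_n\C$, the subalgebra $\Sg^{\B}\At\B$ generated by its atoms contains every atom of $\B$, hence is a complete subalgebra and lies in $S_c\Nr_n\CA_\omega$, while on a \emph{weakly neat} atom structure (one carrying a neat reduct but not consisting solely of them, in the sense of the abstract) this term algebra can be arranged to fall outside $\Nr_n\CA_\omega$; this is the algebraic shadow of the non-elementarity and pseudo-elementarity of $\Nr_n\CA_\omega$ recorded earlier. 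The hard part throughout will be the construction in the second paragraph: on a single uncountable atom structure one must simultaneously secure a \ws\ for \pa\ in the finite-node game $F^{n+2}$ and a \ws\ for \pe\ in all the finite-round games $H_{m,n}$, verifying the latter against the cylindrifier, transformation and amalgamation moves; secondary obstacles are confirming first order expressibility of each finite game and, above all, the countability-driven passage from ``\pe\ wins every finite game'' to ``\pe\ wins the $\omega$-round game'', together with the identification $\A'\cong\Nr_n\C$ that actually lands $\A'$ in $\Nr_n\CA_\omega$ rather than merely on the right atom structure.
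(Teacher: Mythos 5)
Your skeleton is the paper's own: exhibit elementarily equivalent algebras, one kept outside $S_c\Nr_n\CA_{n+2}$ (via \pa\ winning $F^{n+2}$ and Theorem \ref{thm:n}) and a countable one placed inside $\Nr_n\RCA_\omega$ (via \pe\ winning the $\omega$-round game and Theorem \ref{thm:RaC}). But the step by which you manufacture the countable companion $\A'$ contains a genuine gap: it is simply false that, on a countable algebra, \pe's winning strategies in all the finite-round games $H_{m,n}$ can be upgraded by a back-and-forth argument to a winning strategy in the $\omega$-round game $H_n$. These games are infinitely branching, and \pe's finite-round strategies are horizon-dependent --- in the paper's rainbow construction she spaces the indices used for red labels at distance at least $3^{n-r}$, where $n-r$ is the number of rounds \emph{remaining} --- so strategies for different horizons need not cohere into one. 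Indeed, the paper's own rainbow algebra refutes your claimed implication: its atom structure $\alpha$ is countable, \pe\ wins every finite-round game on it, yet \pe\ cannot win $H_n(\alpha)$; if she could, Theorem \ref{thm:RaC} would produce $\C\in\RCA_\omega$ with $\At\Nr_n\C\cong\alpha$, whence $\Nr_n\C\in\Nr_n\CA_\omega\subseteq S_c\Nr_n\CA_{n+2}$, and Theorem \ref{thm:n} would then give \pe\ a winning strategy in $F^{n+2}(\alpha)$, contradicting \pa's winning strategy there. Countability enters only as the hypothesis of Theorem \ref{thm:RaC}; it is of no help whatsoever in winning infinite games.

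The correct passage from finite-round wins to an $\omega$-round win is by saturation, not countability, and this is exactly what the paper does: from the countable rainbow algebra $\A$ it forms a non-principal ultrapower $\B$, in which \pe\ wins the $\omega$-round game by playing her $m$-round strategy in the $m$-th coordinate, so that at every round she is still winning in cofinitely many coordinates. Then --- the second point your proposal misses --- one cannot descend to an \emph{arbitrary} countable elementary subalgebra, since ``\pe\ wins the $\omega$-round game'' is not first order and is not automatically inherited; the paper instead runs an elementary chain $\A=\A_0\preceq\A_1\preceq\cdots\preceq\B$ in which each countable $\A_{i+1}$ is chosen to contain all elements that \pe's strategy selects when \pa\ plays from $\A_i$, so that $\A'=\bigcup_{i<\omega}\A_i$ is countable, elementarily equivalent to $\A$, and closed under the strategy. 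Your unspecified uncountable rainbow structure on which \pe\ wins the full $H_n$ is essentially what the ultrapower supplies for free, but even granting its existence your descent to a countable companion fails for the reason above. Part (ii) of your proposal is in the right spirit (the representable rainbow algebra separates $S_c\Nr_n\CA_\omega$ from $S\Nr_n\CA_\omega=\RCA_n$; separating $\Nr_n\CA_\omega$ from $S_c\Nr_n\CA_\omega$ rests on the known example, cited in the paper's next section, of an algebra in $\Nr_n\CA_\omega$ with a dense, hence complete, subalgebra not in $\Nr_n\CA_{n+1}$), but as proposed the heart of part (i) does not go through.
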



\subsection*{Details of the above idea}

Fix finite $n>2$. We use a rainbow construction for cylindric algebras.
We shall construct a cylindric atom structure based on graphs.
A coloured graph is an undirected irreflexive graph $\Gamma$
such that every edge of $\Gamma$ is coloured by a unique 
edge colour and some $n-1$ tuples
have a unique colour too. $\Z$ denotes the set of integers.
Let $P$ be the set of partial order preserving functions $f:\Z\to \N$ with $|\dom(f)|\leq 2$.

The edge colours (or future atoms) are

\begin{itemize}

\item greens: $\g_i$ ($i<n-1)$, $\g_0^i$, $i\in \Z$.  

\item whites : $\w, \w_f: f\in P$
\item yellow : $\y$
\item black :  $\bb$
\item reds:  $\r_{ij}$ $(i,j\in \N)$,

\item shades of yellow : $\y_S: S\subseteq_{\omega} \N$ or $S=\N$

\end{itemize}

\begin{definition} 
Let $i\in \Z$, and let $\Gamma$ be a coloured graph  consisting of $n$ nodes
$x_0,\ldots  x_{n-2}, z$. We call $\Gamma$ an $i$ - cone if $\Gamma(x_0, z)=\g^0_i$   
and for every $1\leq j\leq n-2$ $\Gamma(x_j, z)=\g_j$, 
and no other edge of $\Gamma$
is coloured green.
$(x_0,\ldots x_{n-2})$ 
is called the centre of the cone, $z$ the apex of the cone
and $i$ the tint of the cone.
We define a class $\bold J$ consisting of coloured graphs 
with the following properties:

\begin{enumarab}

\item $\Gamma$ is a complete graph.

\item $\Gamma$ contains no triangles (called forbidden triples) 
of the following types:

\vspace{-.2in}
\begin{eqnarray}
&&\nonumber\\
(\g, \g^{'}, \g^{*}), (\g_i, \g_{i}, \w), 
&&\mbox{any }i\in n-1\;  \\
(\g^j_0, \y, \w_f)&&\mbox{unless }f\in P, i\in dom(f)\\
(\g^j_0, \g^k_0, \w_0)&&\mbox{ any } j, k\in \Z\\
\label{forb:pim}(\g^i_0, \g^j_0, \r_{kl})&&\mbox{unless } \set{(i, k), (j, l)}\mbox{ is an order-}\\
&&\mbox{ preserving partial function }\Z\to\N\nonumber\\
\label{forb: black}(\y,\y,\y), (\y,\y,\bb)\\ 
\label{forb:match}(\r_{ij}, \r_{j'k'}, \r_{i^*k^*})&&\mbox{unless }i=i^*,\; j=j'\mbox{ and }k'=k^*
\end{eqnarray}
and no other triple of atoms is forbidden.


\item If $a_0,\ldots   a_{n-2}\in \Gamma$ are distinct, and no edge $(a_i, a_j)$ $i<j<n$
is coloured green, then the sequence $(a_0, \ldots a_{n-2})$ 
is coloured a unique shade of yellow.
No other $(n-1)$ tuples are coloured shades of yellow.

\item If $D=\set{d_0,\ldots  d_{n-2}, \delta}\subseteq \Gamma$ and 
$\Gamma\upharpoonright D$ is an $i$ cone with apex $\delta$, inducing the order 
$d_0,\ldots  d_{n-2}$ on its base, and the tuple 
$(d_0,\ldots d_{n-2})$ is coloured by a unique shade 
$y_S$ then $i\in S.$

\end{enumarab}
\end{definition}
\begin{demo}{Proof} 
The proof is very simiar to Hirsch's proof for reation algebras, 
except that we lift the rainbow construction to cylindric algebras. This is highly non- trivial, 
it was first done in \cite{complete}. The idea is to use labelled graphs as atoms; that is algebras will be based on atom structures consisting of labelled 
graph. Cylindrfiers are stimulated by shades of yellow,
which has to do with the $n$ tuples, and their an additional complexity the presence of cones. 

We define a cylindric algebra of 
dimension $n$. We first specify its atom structure.
Let $$K=\{a: a \text { is a surjective map from $n$ onto some } \Gamma\in \bold J
\text { with nodes } \Gamma\subseteq \omega\}.$$
We write $\Gamma_a$ for the element of $K$ for which 
$a:\alpha\to \Gamma$ is a surjection.
Let $a, b\in K$ define the following equivalence relation: $a \sim b$ if and only if
\begin{itemize}
\item $a(i)=a(j)\text { and } b(i)=b(j)$

\item $\Gamma_a(a(i), a(j))=\Gamma_b(b(i), b(j))$ whenever defined

\item $\Gamma_a(a(k_0)\dots a(k_{n-2}))=\Gamma_b(b(k_0)\ldots b(k_{n-1}))$ whenever  
defined
\end{itemize}
Let $\mathfrak{C}$ be the set of equivalences classes. Then define
$$[a]\in E_{ij} \text { iff } a(i)=a(j)$$
$$[a]T_i[b] \text { iff }a\upharpoonright n\sim \{i\}=b\upharpoonright n\sim \{i\}.$$
This, as easily checked, defines a $\CA_n$ 
atom structure.
Let $3\leq n<\omega$. 
Let $\c C_{n}$ be the complex algebra over $\mathfrak{C}$. We will show that $\c C_{n}$ is 
not in $S_c\Nr_n\CA_{n+2}$ 
but an elementary extension of $\c A$ belongs to
$\Nr_{n}\CA_{\omega}.$
But first we translate our games to games on coloured graphs:
Let $N$ be an atomic $\c C_{n}$ network. 
Let $x,y$ be two distinct nodes occuring in the
$n$ tuple $\bar z$. $N(\bar z)$ is an atom of $\c C_{n}$ 
which defines an edge colour of 
$x,y$. Using the fact that the dimension is at least $3$, 
the edge colour depends only on $x$ and $y$
not on the other elements of 
$\bar z$ or the positions of $x$ and $y$ in $\bar z$.
Similarly $N$ defines shades of white for certain $(n-1)$ tuples.  In this way $N$ 
translates
into a coloured graph. 
This translation has an inverse. More precisely we have:
Let $\Gamma\in \bold J$ be arbitrary. Define $N_{\Gamma}$ 
whose nodes are those of $\Gamma$
as follows. For each $a_0,\ldots a_{n-1}\in \Gamma$, define 
$N_{\Gamma}(a_0,\ldots  a_{n-1})=[\alpha]$
where $\alpha: n\to \Gamma\upharpoonright \set{a_0,\ldots a_{n-1}}$ is given by 
$\alpha(i)=a_i$ for all $i<n$. 
Then, as easily checked,  $N_{\Gamma}$ is an atomic $\c C_{n}$ network.
Conversely , let $N$ be any non empty atomic $\c C_n$ network. 
Define a complete coloured graph $\Gamma_N$
whose nodes are the nodes of $N$ as follows:  
\begin{itemize}
\item For all distinct $x,y\in \Gamma_N$ and edge colours $\eta$, $\Gamma_N(x,y)=\eta$
if and only if  for some $\bar z\in ^nN$, $i,j<n$, and atom $[\alpha]$, we have
$N(\bar z)=[\alpha]$, $z_i=x$ $z_j=y$ and the edge $(\alpha(i), \alpha(j))$ 
is coloured $\eta$ in the graph $\alpha$.

\item For all $x_0,\ldots x_{n-2}\in {}^{n-1}\Gamma_N$ and all yellows $\y_S$, 
$\Gamma_N(x_0,\ldots x_{n-2})= \y_S$ if and only if 
for some $\bar z$ in $^nN$, $i_0, i_{n-2}<n$
and some atom $[\alpha]$, we have
$N(\bar z)=[\alpha]$, $z_{i_j}=x_j$ for each $j<n-1$ and the $n-1$ tuple 
$\langle \alpha(i_0),\ldots \alpha(i_{n-2})\rangle$ is coloured 
$\y_S.$ Then $\Gamma_N$ is well defined and is in $\bold J$.
\end{itemize}
The following is then, though tedious and long,  easy to  check:
For any $\Gamma\in \bold J$, we have  $\Gamma_{N_{\Gamma}}=\Gamma$, 
and for any $\c C_n$ network
$N$ $N_{{\Gamma}_N}=N.$
This translation makes the following equivalent formulation of the 
games $F^m(\At\c C_n),$ originally defined on networks. 
The new  game builds a nested sequence $\Gamma_0\subseteq \Gamma_1\subseteq \ldots $.
of coloured graphs. 
Let us start with the game $F^m(\alpha)$. 
\pa\ picks a graph $\Gamma_0\in \bold J$ with $\Gamma_0\subseteq m$ and 
$|\Gamma_0|=m$. $\exists$ makes no response
to this move. In a subsequent round, let the last graph built be $\Gamma_i$.
$\forall$ picks 
\begin{itemize}
\item a graph $\Phi\in \bold J$ with $|\Phi|=m$
\item a single node $k\in \Phi$
\item a coloured garph embedding $\theta:\Phi\sim \{k\}\to \Gamma_i$
Let $F=\phi\smallsetminus \{k\}$. Then $F$ is called a face. 
\pe\ must respond by amalgamating
$\Gamma_i$ and $\Phi$ with the embedding $\theta$. In other words she has to define a 
graph $\Gamma_{i+1}\in C$ and embeddings $\lambda:\Gamma_i\to \Gamma_{i+1}$
$\mu:\phi \to \Gamma_{i+1}$, such that $\lambda\circ \theta=\mu\upharpoonright F.$
\end{itemize} 
Let us consider the possibilities. There may be already a point $z\in \Gamma_i$ such that
the map $(k\to z)$ is an isomorphism over $F$.
In this case \ \pe does not need to extend the 
graph $\Gamma_i$, she can simply let $\Gamma_{i+1}=\Gamma_i$
$\lambda=Id_{\Gamma_i}$, and $\mu\upharpoonright F=Id_F$, $\mu(\alpha)=z$.
Otherwise, without loss of generality, 
let $F\subseteq \Gamma_i$, $k\notin \Gamma_i$.
Let ${\Gamma_i}^*$ be the colored graph with nodes $\nodes(\Gamma_i)\cup\{k\}$,
whose edges are the combined edges of $\Gamma_i$ and $\Phi$, 
such that for any $n-1$ tuple $\bar x$ of nodes of 
${\Gamma_i}^*$, the color ${\Gamma_i}^*(\bar x)$ is
\begin{itemize}
\item $\Gamma_i(\bar x)$ if the nodes of $x$ 
all lie in $\Gamma$ and $\Gamma_i(\bar x)$ is defined 
\item $\phi(\bar x)$ if the nodes of $\bar x$ all lie in 
$\phi$ and $\phi(\bar x)$ is defined
\item undefined, otherwise.
\end{itemize}
\pe\ has to complete the labeling of $\Gamma_i^*$ by adding 
all missing edges, colouring each edge $(\beta, k)$
for $\beta\in \Gamma_i\sim\Phi$ and then choosing a shade of 
white for every $n-1$ tuple $\bar a$ 
of distinct elements of ${\Gamma_i}^*$
not wholly contained in $\Gamma_i$ nor $\Phi$, 
if non of the edges in $\bar a$ is coloured green.
She must do this on such a way that the resulting graph belongs to $\bold J$.
If she survives each round, \pe\ has won the play.
Notice that \pe\ has a \ws in the in $F^m(\At(\C_n))$ if and only if 
and only if she has a wininng strategy in the 
graph games defined above. Th is tedious and rather long to verify but basically routine. 
We now show that the rainbow algebra $\c A$ (definition above) 
is not in ${\bf S_c}\Nr_{\mu}\CA_{n+2}$.

For that we show $\forall$ can win the game $F^{n+2}(\At(\C_n))$. In his zeroth move, $\forall$ plays a graph $\Gamma \in \bold J$ with
nodes $0, 1,\ldots, n-1$ and such that $\Gamma(i, j) = \w (i < j <
n-1), \Gamma(i, n-1) = \g_i ( i = 1,\ldots, n), \Gamma(0, n-1) =
\g^0_0$, and $ \Gamma(0, 1,\ldots, n-2) = \y_\omega $. This is a $0$-cone
with base $\{0,\ldots , n-2\}$. In the following moves, $\forall$
repeatedly chooses the face $(0, 1,\ldots, n-2)$ and demands a node (possibly used before)
$\alpha$ with $\Phi(i,\alpha) = \g_i (i = 1,\ldots,  n-2)$ and $\Phi(0, \alpha) = \g^\alpha_0$,
in the graph notation -- i.e., an $\alpha$-cone on the same base.
$\exists$, among other things, has to colour all the edges
connecting nodes. The idea is that by the rules of the game 
only permissible colours would be red. Using this, $\forall$ can force a
win eventually for else we are led to a a decreasing sequence in $\N$.
In more detail,
In the initial round $\forall$ plays a graph $\Gamma$ with nodes $0,1,\ldots n-1$ such that $\Gamma(i,j)=\w$ for $i<j<n-1$ 
and $\Gamma(i,n-1)=\g_i$
$(i=1, \ldots n-2)$, $\Gamma(0,n-1)=\g_0^0$ and $\Gamma(0,1\ldots n-2)=\y_{N}$.
$\exists$ must play a graph with $\Gamma_1(0,\ldots n-1)=\g_0$.
In the following move $\forall$ chooses the face $(0,\ldots n-2)$ and demands a node $n$ 
with $\Gamma_2(i,n)=\g_i$ and $\Gamma_2(0,n)=\g_0^{-1}$
$\exists$ must choose a label for the edge $(n,n-1)$ of $\Gamma_2$. It must be a red atom $r_{mn}$. Since $-1<0$ we have $m<n$.
In the next move $\forall$ plays the face $(0, \ldots n-2)$ and demands a node $n+1$ such that  $\Gamma_3(i,n+1)=\g_i^{-2}$.
Then $\Gamma_3(n+1,n)$ $\Gamma_3(n+1,n-1)$ both being red, the indices must match.
$\Gamma_3(n+1,n)=r_{ln}$ and $\Gamma_3(n+1, n-1)=r_{lm}$ with $l<m$.
In the next round $\forall$ plays $(0,1\ldots n-2)$ and reuses the node $n-2$ such that $\Gamma_4(0,n-2)=\g_0^{-3}$.
This time we have $\Gamma_4(n,n-1)=\r_{jl}$ for some $j<l\in N$.
Continuing in this manner leads to a decreasing sequence in $\N$.

Recall from definition~\ref{def:games} that $H_k(\alpha)$ 
is the hypernetwork game with $k$ rounds.
The translation of the games $H$ and $H_k$ to graphs is as follows.

\begin{itemize}
\item
Fix some hyperlabel $\lambda_0$.  $H_{\mu}(\alpha)$ is  a 
game the play of which consists of a sequence of
$\lambda_0$-neat hypernetworks 
$N_0, N_1,\ldots$ where $\nodes(N_i)$
is a finite subset of $\omega$, for each $i<\omega$.  
A neat hypernetwork, now, is a pair $(\Gamma, N^h)$ with $\Gamma$ a coloured 
graph. $N^h$ are the hyperlabels, these we forget for a while and identify the pair $(\Gamma, N^h)$ with $\Gamma.$
Hyperedges will be dealt with later, and we shall see that they are easior to deal with.
$\forall$ picks a graph $\Gamma_0\in \bold J$ with $\Gamma_0\subseteq_{\omega} \omega$ and 
here we do not require that $|\Gamma_0|=n$. 
$\exists$ make no response
to this move. In a subsequent round, let the last graph built be $\Gamma_i$.
$\forall$ picks 
\begin{itemize}
\item a graph $\Phi\in \bold J$ with $|\phi|=|\Gamma|$
\item a single node $k\in \phi$
\item a colored garph embedding $\theta:\Phi\sim\{k\}\to \Gamma_i$
Let $F=\phi\sim \{k\}$. Then $F$ is called a face. 
\pe must respond by amalgamating
$\Gamma_i$ and $\phi$ with the embedding $\theta$ as before. 
In other words she has to define a 
graph $\Gamma_{i+1}\in C$ and embeddings $\lambda:\Gamma_i\to \Gamma_{i+1}$
$\mu:\phi \to \Gamma_{i+1}$, such that $\lambda\circ \theta=\mu\upharpoonright F.$

\end{itemize} 
Now we may write $N_{\Gamma}$ or simply $N$ instead of $\Gamma$, but in all cases we are dealing with {\it coloured graphs} 
that is {\it the translation} of networks. That is when we writ $N$ then, $N$ will be viewed as a coloured graph.
Alternatively, \pa\ can play a \emph{transformation move} by picking a
previously played graph $N$ and a partial, finite surjection
$\theta:\omega\to\nodes(N)$, this move is denoted $(N, \theta)$.  \pe\
must respond with $N\theta$.  Finally, \pa\ can play an
\emph{amalgamation move} by picking previously played graphs
$M, N$ such that $M\equiv^{\nodes(M)\cap\nodes(N)}N$ and
$\nodes(M)\cap\nodes(N)\neq \emptyset$  
This move is denoted $(M,
N)$.  To make a legal response, \pe\ must play a $\lambda_0$-neat
hypernetwork $L$ extending $M$ and $N$, where
$\nodes(L)=\nodes(M)\cup\nodes(N)$.
Again, \pe\ wins $H(\alpha)$ if she responds legally in each of the
$\omega$ rounds, otherwise \pa\ wins.

It will simplify things a bit if we alter the rules of the game
$H(\alpha)$ slightly.
We impose certain restrictions on \pa.
First a piece of notation. A triangle move $\Gamma$ in the graph game will be denoted
by $(\Gamma, F, k, b, l),$  $\Phi=F\cup\{k\}$ and 
$b\leq {\sf c}_lN_{\Gamma}(f_0,\dots  f_{n-1})$.  
\begin{itemize}
\item
\pa\ is only allowed to play a triangle move $(\Gamma, F, k, b, l)$ if
there does not exist $l\in\nodes(\Gamma)$ such that 
$N_{\Gamma)}(f_0,\ldots f_i,l\ldots f_{n-2})=b.$ 

\item
\pa\ is only allowed to play transformation moves $(N, \theta)$ if $\theta$ is injective.

\item
\pa\ is only allowed to play an amalgamation move $(M, N)$ if for all
$m\in\nodes(M)\setminus\nodes(N)$ and all
$n\in\nodes(N)\setminus\nodes(M)$ the map $\set{(m, n)}\cup\set{(x,
x):x\in\nodes(M)\cap\nodes(N)}$ is not a partial isomorphism.  I.e. he
can only play $(M, N)$ if the amalgamated part is `as large as
possible'.
\end{itemize}  If, as a result of these restrictions, \pa\ cannot move at
some stage then he loses and the game halts.

It is easy to check that \pa\ has a \ws\ in $H(\alpha)$ iff he has a
\ws\ with these restrictions to his moves.  Also, if \pa\ plays with
these restrictions to his moves, if \pe\ has a \ws\ then she has a
\ws\ which only directs her to play strict hypernetworks.  The same
holds when we consider $H_n(\alpha)$.  
We will assume that \pa\ plays 
according to these restrictions.
\end{itemize}

Now we deal with hypernetworks.
Notice that in this game $\forall$ is not allowed to reuse nodes and so his winning strategy above does not work.
In a play of $N(\alpha)$ $\exists$ is required to play $\lambda_0$ neat hypernetworks, so she has no choice about the 
hyperedges for short edges, these are labelled by $\lambda_0$. In response to a cylindrifier move 
$(N, F, k,l)$ all long hyperedges not incident with $k$ necessarily keep the hyperlabel they had in $N$. 
All long hyperedges incident with $k$ in $N$
are given unique hyperlabels not occuring as the hyperlabel of any other hyperedge in $M$. 
We assume, without loss of generality, that we have infinite supply of hyperlabels of all finite arities so this is possible.
In reponse to an amalgamation move $(M,N)$ all long hyperedges whose range is contained in $\nodes(M)$ 
have hyperlabel determined by $M$, and those whose range is contained in nodes $N$ have hyperlabel determined
by $N$. If $\bar{x}$ is a long hyperedge of $\exists$'s response $L$ where 
$\rng(\bar{x})\nsubseteq \nodes(M)$, $\nodes(N)$ then $\bar{x}$ 
is given 
a new hyperabel, not used in any previously played hypernetwork and not used within $L$ as the label of any hyperedge other than $\bar{x}$.
This completeles her strategy for labelling hyperedges.

Now we give $\exists$'s strategy for edge labelling. We need some notation and terminology. 
Every irreflexive edge of any hypernetwork has an owner $\forall$ or $\exists$ namely the one who played this edge.
We call such edges $\forall$ edges or $\exists$ edges. Each long hyperedge $\bar{x}$ in a hypernetwork $N$ 
occuring in the play has an envelope $v_N(\bar{x})$ to be defined shortly. 
In the initial round of $\forall$ plays $a\in \alpha$ and $\exists$ plays $N_0$ 
then all irreflexive edges of $N_0$ belongs to $\forall$. There are no long hyperdeges in $N_0$. If in a later move, 
$\forall$ plays the transformation move $(N,\theta)$ 
and $\exists$ responds with $N\theta$ then owners and envelopes are inherited in the obvious way. 
If $\forall$ plays a cylindrifier move $(N, F, k, b,l)$ and $\exists$ responds with $M$ then the owner
in $M$ of an edge not incident with $k$ is the same as it was in $N$
and the envelope in $M$ of a long hyperedge not incident with $k$ is the same as that it was in $N$. 
The edges $(f,k), (k,f)$ belong to $\forall$ in $M$ all edges $(l,k)(k,l)$
for $l\in nodes(N)\sim F$ belong to $\exists$ in $M$.
if $\bar{x}$ is any long hyperedge of $M$ with $k\in \rng(\bar{x}$, then $v_M(\bar{x})=\nodes(M)$.
If $\forall$ plays the amalgamation move $(M,N)$ and $\exists$ responds with $L$ 
then for $m\neq n\in nodes(L)$ the owner in $L$ of a edge $(m,n)$ is $\forall$ if it belongs to
$\forall$ in either $M$ or $N$, in all other cases it belongs to $\exists$ in $L$. If $\bar{x}$ is a long hyperedge of $L$
then $v_L(\bar{x}=v_M(x)$ if $range(x)\subseteq nodes(M)$ $v_L(x)=v_N(x)$ and  $v_L(x)=\nodes(M)$ otherwise.
This completes the definition of owners and envelopes.
By induction on the number of rounds one can show

\begin{athm}{Claim} Let $M, N$ ocur in a play of $H_n(\alpha)$ in which $\exists$ uses default labelling
for hyperedges. Let $\bar{x}$ be a long hyperedge of $M$ and let $\bar{y}$ be a long hyperedge of $N$.
\begin{enumarab}
\item For any hyperedge $\bar{x}'$ with $\rng(\bar{x}')\subseteq _M(\bar{x})$, if $M(\bar{x}')=M(\bar{x})$
then $\bar{x}'=\bar{x}$.
\item if $\bar{x}$ is a long hyperedge of $M$ and $\bar{y}$ is a long hyperedge of $N$, and $M(\bar{x})=N(\bar{y})$ 
then there is a local isomorphism $\theta: v_M(\bar{x})\to v_N(\bar{y})$ such that
$\theta(x_i)=y_i$ for all $i<|x|$.
\item For any $x\in \nodes (M)\sim v_M(\bar{x})$ and $S\subseteq v_M(\bar{x})$, if $(x,s)$ belong to $\forall$ in $M$
for all $s\in S$, then $|S|\leq 2$.
\end{enumarab}
\end{athm}  
Now we define $\exists$'s straetegy for choosing the labels for edges and yelows colours for $n-1$ hyperedges.
Let $N_0, N_1,\ldots N_r$ be the start of a play of $H_k(\alpha)$ just before round $r+1$. 
$\exists$ computes partial functions $\rho_s:Z\to N$, for $s\leq r$. Inductively
for $s\leq r$ suppose

I. If $N_s(x,y)$ is green or yellow then $(x,y)$ belongs to $\forall$ in $N_s$.

II. $\rho_0\subseteq \ldots \rho_r$

III. $\dom(\rho_s)=\{i\in Z: \exists t\leq s, x,y\in \nodes(N_t), N_t(x,y)=\g_0^i\}$

IV. $\rho_s$ is order preserving: if $i<j$ then $\rho_s(i)<\rho_s(j)$. The range
of $\rho_s$ is widely spaced: if $i<j\in \dom\rho_s$ then $\rho_s(i)-\rho_s(j)\geq  3^{n-r}$, where $n-r$ 
is the number of rounds remaining in the game.

V. For $u,v,x,y\in \nodes(N_s)$, if $N_s(u,v)=\r_{\mu,\delta}$, $N_s(x,u)=\g_0^i$, $N_s(x,v)=\g_0^j$
$N_s(y,u)=N_s(y,v)=\y$ then

(a) if $N_s(x,y)\neq \w_f$ then $\rho_s(i)=\mu$ and $\rho_s(j)=\delta$

(b) If $N_s(x,y)=\w_f$ for some $f\in P$ , the $\mu=f(i)$ , $\delta=f(j)$.

VI. $N_s$ is a strict $\lambda_0$ neat hypernetwork.

To start with if $\forall$ plays $a$ in the intial round then $\nodes(N_0)=\{0,1,\ldots n-1\}$, the 
hyperedge labelling is defined by $N_0(0,1,\ldots n)=a$.

In reponse to a traingle move $(N_s, F, k, \g_0^p,l)$ by $\forall$, for some $s\leq r$ and some $p\in Z$, 
$\exists$ must extend $\rho_r$ to $\rho_{r+1}$ so that $p\in \dom(\rho_{r+1})$
and the gap between elements of its range is at least $3^{n-r-1}$. Inductively $\rho_r$ is order preserving and the gap betwen its elements is 
at least $3^{n-r}$, so this can be maintained in a further round.  
If $\forall$ chooses non green atoms, green atoms with the same suffix, or green atom whose suffices 
already belong to $\rho_r$, there would be fewer 
elements to add to the domain of $\rho_{r+1}$, which makes it easy for $\exists$ to define $\rho_{r+1}$.
Tis establishe properties $II-IV$ for round $r+1$.

Let us assume that  $\forall$ play the triangle move $(N_s, F, k, a, l)$ in round $r+1$. 
$\exists$ has to choose labels for $\{(x,k), (k,x)\}$ 
$x\in \nodes(N_s)\sim F$. She chooses labels for the edges
$(x,k)$ one at a time and then determines the reverse edges $(k,x)$ uniquely.
Property $I$ is clear since in all cases the only atoms $\exists$ chooses are white, black or red.

Now we distinguish between two case.

If $x$ and $k$ are both apexes of cones on $F$, then $\exists$ has no choice but to pick a red atom. The colour she chooses is uniquely defined.
Other wise, this is not the case, so for some $i<n-1$ there is no $f\in F$ such 
that $N_s(\beta, f), N_s(f,x)$ are both coloured $\g_i$ or if $i=0$, they are coloured
$\g_0^l$ and $\g_0^{l'}$ for some $l$ and $l'$.

Now we distinguish between several subcases:

\begin{enumarab}

\item Suppose that it is not the case that there exists distinct $i,j\in F$ $N_s(x,i)$ and $a$ are both green
and $N_s(x,j)$ and $a$ are both green.
Let $S=\{p\in Z: (N_s(x,i)=\g_0^p\land a =\y)\lor N_s(x,i)=\y \land a=\g_0^p)\}$
Then $|S|\leq 2$. $\exists$ lets $N_{s+1}(x,k)=\w_f$ for some $f$ with $\dom(f)=S$.

Suppose that there exists $i,j$ distinct in $F$ such that $N_s(i,j)=\r_{\beta, \mu}$, $N_s(x,i)=\g_p$, $N_s(x,j)=\g_q$ for some $p,q\in Z$. 
By property $(IV)$ $f=\{(p,\beta), (q,\mu)\}$ is order preserving.
$\exists$ lets $N_{s+1}(x,k)=\w_f$ in this case.

In all other cases: either there are $i,j\in F$ such that $N_s(i,j)$ is not red, or if it is 
then it is not the case that $N_s(x,i)$ $N_s(x,j)$ are both green, and it is not the case that $N_s(x,i)=N_s(x,j)=\y$, 
$\exists$ lets $f:S\to N$ an arbitrary order preserving function.
The only forbidden triangles involving $\w_f$ are avoided. Since $\exists$ does not change green or yellow atoms to label new edges
and $N_{r+1}(x,k)=\w_f$, all traingles involving the new edge $(x,k)$ are consistent in $N_{r+1}$.
Clearly propery $VI$ holds after round $r+1$.

\item Else if it is not the case that $N_s(x,i)=a$ and not the case that $N_s(x,j)=y$, $\exists$ lets $N_r(x,k)=\bb$. 
Property $V$ is not applicable in this case.
The only forbidden triple involving the atom $\bb$ is avoided, so all triangles 
$(x,y,k)$ are consistent in $N_{r+1}$ and property
$VI$ holds after round $r+1$lets $N_{r+1}(x,k)=a$.

\item If neither case above applies, then for all distinct $i,j\in F$ either $N_s(x,i)=\g_p$, $a=\g_q$ (for some $p$ and $q$)
and $N_s(x,j)=N_s(x,i)=a=\y$ and $N_s(x,j)=\g_p$, $b=\g_q$.
Assume the first alternative.  There are two subcases.
\begin{enumroman}
\item $N_s(i,j)\neq w_f$ for all $f\in P$. $\exists$ lets $\mu=\rho_{r+1}(p)$, $\delta=\rho_{r+1}(q)$,
maintaining property $Va$. The only forbidden triples of atoms involving $r_{\mu,\delta}$ are avoided. The triple
of atoms form a traingle $(x,y,k)$ will not be forbidden since the only green edge incident 
with $k$ is $(i,k)$ and since $\rho_{r+1}$ is order preserving. To check forbidden triple (16)
suppose $N_s(x,y), N_{r+1}(k)$ are both red for some $y\in \nodes(N_r).$
We have $y\notin \{i,j\}$ so $\exists$ chose the red label $N_{r+1}(y,k)$. By her strategy we have
$N_s(i,y)=\g_t$ and $N_s(j,y)=\y$ . By property $Va$ for $N_{r+1}$ we have 
$N_{r+1}(x,y)=r_{\rho_{r+1}(p)\rho_{r+1}(t)}$ and $N_{r=1}(y,k)=r$ 
The property $VI$ holds for $N_{r+1}$

\item $N_s(i,j)=\w_f$ for some $f\in F$. By consistency of $N_s$ and forbidden triple (14), we have $p\in dom(f)$and since $\forall$'s move 
we have $q\in \dom(f)$. $\exists$ lets $\mu=f(p)$ $\delta=f(q)$
maintaining property
$V$ for round $r+1$. As above, the only forbidden triples of atoms involving $r_{\mu,\delta}$ are (15) and (16).
Since $f$ is order preserving and since the only green edge incident
with $k$ is $(i,k)$ in $N_{r+1}$  tariangles involving the new edge $(x,k)$ cannot give a forbidden triple. 
For forbidden triple (16) let $y\in \nodes(N_s)$ and suppose
$N_{r+1}(x,y)$$ N_{r+1}(y,k)$ are both red. As above, by her strategy we must have $N_s(y,i)=\g_t$ for some $t$ and $N_s(y,j)=\y$.
By consistency of $N_s$ we have $t\in \dom(f)$ and the current part of her strategy she lets $N_{r+1}(y,k)=\r_{f(t),f(q)}.$ 
By property $Vb$ for $N_s$
we have $N_{r+1}(x,y)=\r_{f(p), f(t)}$. So the triple of atoms from the triangle $(x,y,k)$ is not forbidden by (16). This establishes propery $(VI)$ for
$N_{r+1}$
\end{enumroman}
\end{enumarab}
We have finished with triangle moves. Now we move to amalgamation moves.
Although our hypernetworks are all strict, it is not necessarily the case that hyperlabels label unique hyperedges - amalgmation moves can force that 
the same hyperlabel can label more than one hyperedge. However, within the envelope of a hyperdege $\bar{x}$, the hyperlabel $N(\bar{x})$ is unique.

We consider an amalgamation move $(N_s,N_t)$ chosen by $\forall$ in round $r+1$.
$\exists$ has to choose a label for each edge $(i,j)$ where $i\in \nodes(N_s)\sim \nodes(N_t)$ and $j\in \nodes(N_t)\sim (N_s)$.
This determines the label for the reverse edge.
Let $\bar{x}$ enumerate $\nodes(N_s)\cap \nodes(N_t)$
If $\bar{x}$ is short, then there are at most two nodes in the intersection 
and this case is similar to the trangle move. if not, that is if $\bar{x}$ is long in $N_s$, then by the claim 
there is a partial isomorphism $\theta: v_{N_s}(\bar{x})\to v_{N_t}(\bar{x})$ fixing
$\bar{x}$. We can assume that $v_{N_s}(\bar{x})=\nodes(N_s)\cap \nodes N_t)=rng(\bar{x})=v_{N_t}(bar{x})$.
It remains to label the edges $(i,j)\in N_{r+1}$ where $i\in \nodes(N_s)\sim \nodes N_t$ and $j\in \nodes(N_t)\sim \nodes(N_s)$. 
Her startegy is similar to the triangle move. if $i$ and $j$ are tints of the same cone she choose a red. If not she 
chooses  white atom if possible, else the black atom if possible, otherwise a red atom.
She never chooses a green atom, she lets $\rho_{r+1}=\rho_r$ and properies $II$, $III$, $IV$ remain true
in round $r+1$.

\begin{enumarab}  

\item There is no $x\in \nodes(N_s)\cap \nodes(N_t)$ such that $N_s(i,x)$ and $N_t(x,j)$ are both green. 
If there are nodes $u,v\in \nodes(N_s)\cap \nodes(N_t)$ such that
$N_s(u,v)=\r_{\beta,\mu}$, $N_s(i,u)=\g_p$, $N_s(i,v)=\g_q$, $N_t(u,j)=N_t(v,j)=y$ for some $\beta, \mu\in N$, $p,q\in Z$
or the roles of $i,j$ are swapped, she lets $f=\{(p,\beta), (q,\mu)\}$ and sets $N_{r+1}(i,j)=w_f$.
Since all edges labelled by green or yellow atoms belong to $\forall$, we can apply the above claim to 
show that the points $u,v$ are unique so $f$ is well defined.
This s also true if $\bar{x}$ is short, since in this csase there are only
two nodes
in $\nodes(N_s)\cap \nodes(N_t)$.

If there are no such $u,v$ as described then let $S=\{p\in Z: \exists y\in nodes(N_s)\cap Nodes(N_t), (N_s(i,y)=g_p\land N_t(y,j)=y)\lor 
(N_s(i,y)=y\land N_t(y,j=g_p)\}$. Then $|S|\leq 2$. 
Let $f$ be any order preserving function and $\exists$ let $N_{r+1}=w_f$.
Property $(VI)$ holds for $N_{r+1}$ as for traingle moves.

\item Otherwse if there is no such $x$, then she lets $N_r(i,j)=b$. As with traingle moves all properties are maintained.

\item Otherwise, there are $x,y\in \nodes(N_s)\cap \nodes(N_t)$ such that $N_s(i,x)=g_k$, $N_s(x,j)=g_l$ 
for some $k,l\in N$ and $N_s(i,y)=N_t(y,j)=y$. By claim 3 $x,y$ are unique. She labels $(i,j)$in $N_r$ with a red atom $r_{\beta,\mu}$ where
\begin{enumroman}
\item If $N_s(x,y)\neq \w_f$ for all $f\in P$, then $\beta=\rho_{r+1}(k)$, $\mu=\rho_{r+1}(l)$.
This maintains property $Va$.
\item Otherwise $N_s(x,y)=\w_f$ for some $f\in F$ and $\beta=f(k)$ $\mu=f(l)$.

\end{enumroman}
\end{enumarab}

Now we turn to coluring of $n-$ tuples. For each tuple $\bar{a}=a_0,\ldots a_{n-2}\in (\Gamma)^{n-1}$ with no edge 
$(a_i, a_j)$ coloured green, $\exists$ colours $\bar{a}$ by $y_S$, where
$S=\{i\in N: \text { there is an $i$ cone in $\Gamma$ with base $a_0,\ldots a_{n-2}$}\}$
We need to check that such labeling works.

Let us check that $(n-1)$ tuples are labeled correctly, by yellow colours. Let $D$ be  set of $n$ nodes, and supose that $\Gamma\upharpoonright D$ 
is an $i$ cone with apex
$\delta$ and base $\{d_0,\ldots d_{n-2}\}$ , and that the tuple $(d_0,\ldots d_{n-2})$ is labelled $y_S$ in $\Gamma$. 
We need to show that $i\in S$. If $D\subseteq \Gamma$, then inductively the graph
$\Gamma$ constructed so far is in $\bold J$, and therefore
$i\in S$. If $D\subseteq \Phi$ then as $\forall$ chose $\Phi$ in $\bold J$ we get also $i\in S$. If neither holds, then $D$ contains $\alpha$ 
and also some
$\beta\in \Gamma\sim \Phi$. $\exists$ chose the colour $\Gamma^+(\alpha,\beta)$ and her strategy ensures her that it is green. 
Hence neither $\alpha$ or $\beta$ can be the apex of the cone $\Gamma^+\upharpoonright D$, 
so they must both lie in the base $\bar{d}$. 
This implies that 
$\bar{d}$ is not yet labelled in $\Gamma^*$, so $\exists$ has applied her strategy to choose the colour $y_S$ to label $\bar{d}$ in $\Gamma^+$. 
But this strategy will have chosen $S$ containing $i$ since $\Gamma^*\upharpoonright D$ is already a cone
in $\Gamma^*$. Also $\exists$ never chooses a green edge, so all green edges of $\Gamma^+$ lie in $\Gamma^*$. 

That leaves one (hard) case, where there are two nodes $\beta, \beta',
\in \Gamma$, $\exists$ colours both $(\beta, \alpha)$ and $(\beta',
\alpha)$ red, and the old edge $(\beta, \beta')$ has already been
coloured red (earlier in the game).
If $(\beta, \beta')$ was coloured by $\exists$, then there is no problem. 
So suppose, for a contradiction, that $(\beta, \beta')$ was coloured by
$\forall$. Since $\exists$ chose red colours for $(\alpha, \beta)$
and $(\alpha, \beta')$, it must be the case that there are cones in
$\Gamma^*$ with apexes $\alpha, \beta, \beta'$ and the same base,
$F$, each inducing the same linear ordering $\bar{f} = (f_0,\ldots,
f_{n-2})$, say, on $F$. Of course, the tints of these cones may all
be different. Clearly, no edge in $F$ is labeled green, as no cone base can contain
green edges. It follows that $\bar{f}$  must be labeled by some
yellow colour, $\y_S$, say. Since $\Phi\in \bold J$, it obeys its definition, so
the tint $i$ (say) of the cone from $\alpha$ to $\bar{f}$ lies in
$S$. Suppose that $\lambda$ was the last node of $ F \cup \{ \beta,
\beta' \}$ to be created,as the game proceeded. As $ |F \cup \{
\beta, \beta' \}| = n + 1$, we see that $\exists$ must have chosen
the colour of at least one edge in this : say, $( \lambda, \mu )$.
Now all edges from $\beta$ into $F$ are green, and so chosen by
$\forall$, and the edge  $(\beta, \beta')$ was also chosen by him.
The same holds for edges from $\beta'$ to $F$. Hence $\lambda, \mu
\in F$. We can now see that it was $\exists$ who chose the colour $\y_S$ of
$\bar{f}$. For $\y_S$ was chosen in the round when $F$'s last node,
i.e., $\lambda$ was created. It could only have been chosen by
$\forall$ if he also picked the colour of every edge in $F$
involving $\lambda$. This is not so, as the edge $(\lambda, \mu)$
was coloured by $\exists$, and lies in $F$.
As $i \in S$, it follows from the definition of $\exists$'s strategy
that at the time when $\lambda$ was added, there was already an
$i$-cone with base $\bar{f}$, and apex $\gamma$ say.
We claim that $ F \cup \{ \alpha \}$ and $ F \cup \{ \gamma \}$ are
isomorphic over $F$. For this, note that the only $(n - 1)$-tuples of
either $ F \cup \{ \alpha \}$ or $ F \cup \{ \gamma \}$ with a
yellow colour are in $F$ ( since all others involve a green edge
). But this means that $\exists$ could have taken $\alpha = \gamma$ in
the current round, and not extended the graph. This is contrary to
our original assumption, and completes the proof.

Let $\c A$ be the rainbow algebra defined above.
There is a countable cylindric  algebra $\c A'$ such that $\c A'\equiv\c
A$ and \pe\ has a \ws\ in $H(\c A')$.
We have seen that for $n<\omega$ \pe\ has a \ws\ $\sigma_n$ in $H_n(\c A)$.  We can assume that $\sigma_n$ is deterministic.
Let $\c B$ be a non-principal ultrapower of $\c A$.  We can show that
\pe\ has a \ws\ $\sigma$ in $H(\c B)$ --- essentially she uses
$\sigma_n$ in the $n$'th component of the ultraproduct so that at each
round of $H(\c B)$ \pe\ is still winning in co-finitely many
components, this suffices to show she has still not lost.  
Now use an
elementary chain argument to construct countable elementary
subalgebras $\c A=\c A_0\preceq\c A_1\preceq\ldots\preceq\c B$.  For this,
let $\c A_{i+1}$ be a countable elementary subalgebra of $\c B$
containing $\c A_i$ and all elements of $\c B$ that $\sigma$ selects
in a play of $H_\omega(\c B)$ in which \pa\ only chooses elements from
$\c A_i$. Now let $\c A'=\bigcup_{i<\omega}\c A_i$.  This is a
countable elementary subalgebra of $\c B$ and \pe\ has a \ws\ in
$H(\c A')$.
\end{demo}

\section{The class of neat reducts}
 Here we study the class of neat reducts. 

\begin{theorem}
\begin{enumarab}
\item  Let $n>1$ and $\alpha\geq \omega$. Then the class $\Nr_n\CA_{\alpha}=\Nr_n\RCA_{\alpha}$ is pseudo-elementary, but is not elementary.
Furthermore, $El\Nr_n\CA_{\omega}\subset \RCA_n$, $EL\Nr_n\CA_{\omega}$ is recursively enumerable, and for $n>2$ is 
not finitely axiomatizable.
\item The class $\CCA_n$ of dimension $n\geq 3$, is also psuedo elementary and furthermore, its elementary closure
is  not finitely axiomatixable
\end{enumarab}
\end{theorem}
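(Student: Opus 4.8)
\emph{The equality and pseudo-elementarity.} Since $\RCA_\alpha\subseteq\CA_\alpha$, the inclusion $\Nr_n\RCA_\alpha\subseteq\Nr_n\CA_\alpha$ is free; the plan is to prove the converse by reducing to $\alpha=\omega$ (neat reducts compose, $\Nr_n\B=\Nr_n\Nr_\omega\B$, and a representation of $\Nr_\omega\B$ dilates to $\alpha$ coordinates). Given $\A=\Nr_n\B$ with $\B\in\CA_\omega$, I would pass to the dilation $\Sg^\B\A$, the subalgebra of $\B$ generated by $\A$, and note $\A\subseteq\Nr_n\Sg^\B\A\subseteq\Nr_n\B=\A$, so $\A=\Nr_n\Sg^\B\A$. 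As $\Sg^\B\A$ is generated by $n$-dimensional elements, each of its elements has finite dimension set, i.e.\ $\Sg^\B\A$ is dimension-complemented, hence representable by the Henkin--Monk--Tarski theorem, giving $\A\in\Nr_n\RCA_\omega$. For pseudo-elementarity I would exhibit the standard two-sorted theory $T$: one sort carrying a $\CA_\omega$ (axiomatised by the \emph{recursive} set of $\CA_\omega$-axioms), a second carrying a $\CA_n$, and a function symbol that $T$ forces to be an isomorphism of the second sort onto the neat $n$-reduct of the first. The $\CA_n$-reducts of models of $T$ are exactly $\Nr_n\CA_\omega$, so the class is pseudo-elementary; the recursiveness of $T$ will be reused for the r.e.\ claim.

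\emph{Non-elementarity and $El\,\Nr_n\CA_\omega\subsetneq\RCA_n$.} For $3\le n<\omega$ non-elementarity is immediate from Theorem~\ref{r}(i) with $K=\Nr_n\CA_\omega$ (the rainbow algebra and its countable elementary equivalent in $\Nr_n\RCA_\omega$ being the witnesses); the case $n=2$ is handled by the analogous, simpler construction. The inclusion $El\,\Nr_n\CA_\omega\subseteq\RCA_n$ holds because $\RCA_n=S\Nr_n\CA_\omega$ is a variety, hence elementary. For \emph{proper} inclusion I would, for each finite $k$, introduce the first-order sentence $\sigma_k$ asserting that \pe\ has a \ws\ in the $k$-round game $F^{n+2}_k$ of Definition~\ref{def:games} (first order because the game runs over atoms for finitely many rounds, and vacuously true on atomless algebras). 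Every $\A\in\Nr_n\CA_\omega$ satisfies each $\sigma_k$: if $\A$ is atomic then $\A=\Nr_n\Nr_{n+2}\B\in S_c\Nr_n\CA_{n+2}$, so by Theorem~\ref{thm:n} \pe\ wins $F^{n+2}$ and a fortiori each $F^{n+2}_k$. Hence any atomic \emph{representable} $\C$ on which \pa\ wins some fixed $F^{n+2}_k$ satisfies $\C\in\RCA_n$ but $\C\not\models\sigma_k\in Th(\Nr_n\CA_\omega)$, placing $\C$ outside $El\,\Nr_n\CA_\omega$.

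\emph{Recursive enumerability and failure of finite axiomatisability.} Recursive enumerability is exactly the recursiveness of $T$: $\phi\in Th(\Nr_n\CA_\omega)$ iff $T\vdash\phi$, an r.e.\ condition. For the failure of finite axiomatisability when $n>2$ I would run a Monk/rainbow ultraproduct argument, building atomic representable algebras $\A_k$ on which \pa\ wins $F^{n+2}_k$ but \pe\ still wins all shorter games and all finite approximations of the hypernetwork game $H_n$. Then each $\A_k\not\models\sigma_k$, so $\A_k\notin El\,\Nr_n\CA_\omega$ (this also re-proves proper inclusion), while in a non-principal ultraproduct $\prod_U\A_k$ \pe\ wins every finite game (by \L o\'s); a further ultrapower and the elementary-chain argument used after Theorem~\ref{thm:RaC} then give \pe\ a \ws\ in the full game $H_n$, so Theorem~\ref{thm:RaC} supplies a neat reduct elementarily equivalent to $\prod_U\A_k$, whence $\prod_U\A_k\in El\,\Nr_n\CA_\omega$. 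A single axiom $\sigma$ for $El\,\Nr_n\CA_\omega$ would then fail on every $\A_k$ yet hold in the ultraproduct, contradicting \L o\'s. The main obstacle is precisely the construction of the $\A_k$: one must force the descending-sequence obstruction of the detailed rainbow construction to bottom out within $k$ rounds (so \pa\ wins a \emph{bounded} game, pushing $\A_k$ out of $S_c\Nr_n\CA_{n+2}$) while keeping an $\omega$-dimensional representation intact (so $\A_k\in\RCA_n$); this is the tuning of the red palette and the widely-spaced map $\rho_s$ from the graph game above.

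\emph{The class $\CCA_n$.} Item (2) is parallel, replacing the bounded game $F^{n+2}$ by the $\omega$-dilation game $F^{\omega}$ and using that a countable atomic algebra is completely representable iff \pe\ wins $F^{\omega}(\At\A)$, together with the $H_n$-characterisation of neat reducts (Theorem~\ref{thm:n} and its converse, Theorem~\ref{thm:RaC}). Pseudo-elementarity of $\CCA_n$ follows from the two-sorted theory whose second sort codes a complete representation—a base set with relations interpreting the atoms and the clause demanding that meets of atoms go to genuine intersections—so that the first-sort reduct ranges exactly over $\CCA_n$. For the non-finite-axiomatisability of $El\,\CCA_n$ I would reuse the algebras $\A_k$: \pa\ wins a bounded approximation of $F^{\omega}$ on each (so $\A_k\notin El\,\CCA_n$), whereas the ultraproduct is, by the same ultrapower and elementary-chain step, elementarily equivalent to a completely representable algebra, hence in $El\,\CCA_n$, and \L o\'s\ again defeats any single axiom. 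As before, the crux is engineering the bounded win for \pa\ while preserving complete representability in the limit.
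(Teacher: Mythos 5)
Your treatment of the equality $\Nr_n\CA_{\alpha}=\Nr_n\RCA_{\alpha}$ and of pseudo-elementarity is essentially the paper's own: the dilation $\Sg^{\B}A$ is locally finite (indeed, not merely dimension-complemented), hence representable, and the passage between $\omega$ and arbitrary infinite $\alpha$ is the identity $\Lf_{\omega}=\Nr_{\omega}\Lf_{\alpha}$; for pseudo-elementarity the paper uses a three-sorted defining theory (the extra sort for dimensions, which also makes recursive enumerability of the theory transparent) where you use two sorts, a cosmetic difference. Non-elementarity is, in both cases, delegated to the rainbow argument of Theorem \ref{r}.

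The genuine gap lies in the rest. Both your proper-inclusion argument and your non-finite-axiomatizability argument rest on a family of atomic \emph{representable} algebras $\A_k$ on which \pa\ wins the bounded game $F^{n+2}_k$ while \pe\ wins all shorter games; you explicitly defer their construction (``the main obstacle'') and never produce them, and this is not a routine retuning of the rainbow parameters. In the rainbow algebra of Theorem \ref{r}, \pa\ wins by forcing a strictly decreasing sequence of red indices in $\N$, so the length of the play is dictated by \pe's opening choices: \pa\ wins the $\omega$-round game but no fixed $k$-round game (indeed the paper shows \pe\ survives every bounded hypernetwork game, using the widely spaced maps $\rho_s$). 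Making \pa\ win within $k$ rounds while keeping representability needs genuinely different algebras, and that is exactly what is missing. The paper's proof needs none of this: it takes the Monk algebras of \cite[3.2.79 and 3.2.84]{HMT1}, producing \emph{non-representable} $\A_m=\Nr_n\B_m$ with $\B_m\in\CA_{n+m}$; since $El\Nr_n\CA_{\omega}\subseteq\RCA_n$, these lie outside the elementary closure with no game sentences at all, and the ultraproduct is put back inside via the purely algebraic step $\prod_{m}\Nr_n\C_m/F=\Nr_n(\prod_m\C_m/F)$ with $\prod_m\C_m/F\in\CA_{\omega}$ --- no appeal to Theorem \ref{thm:RaC}, strategy ultrapowers, or elementary chains. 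The proper inclusion $El\Nr_n\CA_{\omega}\subset\RCA_n$ is likewise not derived from games in the paper; it is quoted from \cite{IGPL}. Your item (2) routes through the same unconstructed $\A_k$, so it inherits the gap; nor can you simply substitute the paper's Monk witnesses into your game argument, since nothing guarantees they are atomic, and your sentences $\sigma_k$ say nothing about non-atomic algebras. To be fair, if your construction were carried out it would buy more than the paper's route (a constructive proof of properness and a uniform treatment of $\CCA_n$), but as it stands the central ingredient is absent.
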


\begin{demo}{Proof}  We first show that for any infinite $\alpha$, $\Nr_n\CA_{\omega}=\Nr_n\CA_{\alpha}$. Let $\A\in \Nr_n\CA_{\omega}$, 
so that $\A=\Nr_n\B'$, $\B'\in \CA_{\omega}$. Let $\B=\Sg^{\B'}A$. Then $\B\in \Lf_{\omega}$, and $\A=\Nr_n\B$. 
But $\Lf_{\omega}=\Nr_{\omega}\Lf_{\alpha}$ and we are done.
To show that $\Nr_n\CA_{\omega}\subseteq \Nr_n\RCA_{\omega}$, let $\A\in \Nr_n\CA_{\omega}$, then by the above argument
there exists  
then $\B\in \Lf_{\omega}$ such that $\A=\Nr_n\B$. by $\Lf_{\omega}\subseteq \RCA_{\omega},$ we are done.

Hence It is known that class $\Nr_n\CA_{\omega}$ is not elementary. In fact, there is an algebra $\A\in \Nr_n\CA_{\omega}$ 
having a complete subalgebra $\B$, and $B\notin \Nr_n\CA_{n+1}$ 

To show that it is pseudo-elementary, we use a three sorted defining theory, with one sort for a cylindric algebra of dimension $n$ 
$(c)$, the second sort for the Boolean reduct of a cylindric algebra $(b)$
and the third sort for a set of dimensions $(\delta)$. We use superscripts $n,b,\delta$ for variables 
and functions to indicate that the variable, or the returned value of the function, 
is of the sort of the cylindric algebra of dimension $n$, the Boolean part of the cylindric algebra or the dimension set, respectively.
The signature includes dimension sort constants $i^{\delta}$ for each $i<\omega$ to represent the dimensions.
The defining theory for $\Nr_n\CA_{\omega}$ incudes sentences demanding that the consatnts $i^{\delta}$ for $i<\omega$ 
are distinct and that the last two sorts define
a cylindric algenra of dimension $\omega$. For example the sentence
$$\forall x^{\delta}, y^{\delta}, z^{\delta}(d^b(x^{\delta}, y^{\delta})=c^b(z^{\delta}, d^b(x^{\delta}, z^{\delta}). d^{b}(z^{\delta}, y^{\delta})))$$
represents the cylindric algebra axiom ${\sf d}_{ij}={\sf c}_k({\sf d}_{ik}.{\sf d}_{kj})$ for all $i,j,k<\omega$.
We have have a function $I^b$ from sort $c$ to sort $b$ and sentences requiring that $I^b$ be injective and to respect the $n$ dimensional 
cylindric operations as follows: for all $x^r$
$$I^b({\sf d}_{ij})=d^b(i^{\delta}, j^{\delta})$$
$$I^b({\sf c}_i x^r)= {\sf c}_i^b(I^b(x)).$$
Finally we require that $I^b$ maps onto the set of $n$ dimensional elements
$$\forall y^b((\forall z^{\delta}(z^{\delta}\neq 0^{\delta},\ldots (n-1)^{\delta}\rightarrow c^b(z^{\delta}, y^b)=y^b))\leftrightarrow \exists x^r(y^b=I^b(x^r))).$$
For ${\A}\in \CA_n,$ $\Rd_3{\A}$ denotes the $\CA_3$
obtained from $\A$ by discarding all operations indexed by indices in $n\sim 3.$
$\Df_n$ denotes the class of diagonal free cylindric algebras.
$\Rd_{df}{\A}$ denotes the $\Df_n$ obtained from $\A$
by deleting all diagonal elements. To prove the non-finite axiomatizability result we use Monk's algebras.
For $3\leq n,i<\omega$, with $n-1\leq i, {\C}_{n,i}$ denotes
the $\CA_n$ associated with the cylindric atom structure as defined on p. 95 of 
\cite{HMT1}.
Then by \cite[3.2.79]{HMT1}
for $3\leq n$, and $j<\omega$, 
$\Rd_3{\C}_{n,n+j}$ can be neatly embedded in a 
$\CA_{3+j+1}$.  (1)
By \cite[3.2.84]{HMT1}) we have for every $j\in \omega$, 
there is an $3\leq n$ such that $\Rd_{df}\Rd_{3}{\cal C}_{n,n+j}$
is a non-representable $\Df_3.$ (2)
Now suppose $m\in \omega$. By (2), 
choose $j\in \omega\sim 3$ so that $\Rd_{df}\Rd_3{\C}_{j,j+m+n-4}$
is a non-representable $\Df_3$. By (1) we have
$\Rd_{df}\Rd_3{\C}_{j,j+m+n-4}\subseteq \Nr_3{\B_m}$, for some 
${\B}\in \CA_{n+m}.$
Put ${\A}_m=\Nr_n\B_m$.  
$\Rd_{df}{\A}_m$ is not representable, 
a friotri, ${\A}_m\notin \RCA_{n},$ for else its $\Df$ reduct would be 
representable. Therefore $\A_m\notin EL\Nr_n\CA_{\omega}$.
Now let $\C_m$ be an algebra similar to $\CA_{\omega}$'s such that $\B_m=\Rd_{n+m}\C_m$.
Then $\A_m=\Nr_n\C_m$. Let $F$ be a non-principal ultrafilter on $\omega$. Then
$$\prod_{m\in \omega}\A_m/F=\prod_{m\in \omega}(\Nr_n\C_m)/F=\Nr_n(\prod_{m\in \omega}\C_m/F)$$
But $\prod_{m\in \omega}\C_m/F\in \CA_{\omega}$. Hence $\CA_n\sim El\Nr_n\CA_{\omega}$ is not closed under ultraproducts.
It follows that the latter class is not finitely axiomatizable.
In \cite{IGPL} it is proved that for $1<\alpha<\beta$, $El\Nr_{\alpha}\CA_{\beta}\subset S\Nr_{\alpha}\CA_{\beta}$.
\end{demo}
From the above proof it follows that
\begin{corollary} Let $K$ be any class such that $\Nr_n\CA_{\omega}\subseteq K\subseteq \RCA_n$. Then $ELK$ is not finitely axiomatizable
\end{corollary}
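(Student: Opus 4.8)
The plan is to reuse the witnessing algebras $\A_m=\Nr_n\C_m$ produced in the proof of the preceding theorem and feed them into the standard ultraproduct criterion for finite axiomatizability. Two features of that construction are what matter here: first, $\Rd_{df}\A_m$ is non-representable, so $\A_m\notin\RCA_n$ for every $m$; second, for a non-principal ultrafilter $F$ on $\omega$ one has $\prod_{m\in\omega}\A_m/F=\Nr_n(\prod_{m\in\omega}\C_m/F)\in\Nr_n\CA_\omega$, since $\prod_{m\in\omega}\C_m/F\in\CA_\omega$.

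First I would locate the elementary closure $ELK$. As $\RCA_n$ is a variety it is equationally axiomatizable, hence elementary (closed under elementary equivalence and ultraproducts); so from $K\subseteq\RCA_n$ we get $ELK\subseteq EL\,\RCA_n=\RCA_n$. At the other end, the lower bound $\Nr_n\CA_\omega\subseteq K\subseteq ELK$ is immediate.

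I would then invoke the criterion that an elementary class is finitely axiomatizable if and only if its complement is closed under ultraproducts (equivalently, both the class and its complement are elementary). Since $ELK$ is elementary by construction, it suffices to show that its complement fails to be closed under ultraproducts, and the $\A_m$ do exactly this: each $\A_m\notin\RCA_n\supseteq ELK$ lies in the complement of $ELK$, yet $\prod_{m\in\omega}\A_m/F\in\Nr_n\CA_\omega\subseteq ELK$. Hence the complement of $ELK$ is not closed under ultraproducts, and $ELK$ cannot be finitely axiomatizable.

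I expect no genuine obstacle: the whole content is inherited from the theorem, and the only step that needs a word of justification is the inclusion $ELK\subseteq\RCA_n$, which rests solely on $\RCA_n$ being elementary (being a variety). The corollary simply records that the single ultraproduct witness already exhibited is insensitive to which intermediate class $K$ is chosen between $\Nr_n\CA_\omega$ and $\RCA_n$.
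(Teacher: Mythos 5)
Your proposal is correct and follows exactly the paper's intended argument: the corollary is stated as an immediate consequence of the preceding theorem, whose proof supplies the Monk-type witnesses $\A_m=\Nr_n\C_m\notin\RCA_n$ with $\prod_{m\in\omega}\A_m/F\in\Nr_n\CA_{\omega}$, and your two bookkeeping steps ($ELK\subseteq\RCA_n$ because $\RCA_n$ is a variety, hence elementary, and $\Nr_n\CA_\omega\subseteq ELK$) plus the ultraproduct criterion are precisely what the paper's "from the above proof it follows" elides.
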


\subsection{Weakly but not strongly neat atom structures}

\begin{definition} An atom structure of dimension $\alpha$ is strongly neat, 
if any algebra having this atom structure is in $\Nr_{\alpha}\CA_{\alpha+\omega}$.
It is weakly neat if one algebra based on it is neat reduct, very weakly neat if no algebra based on it is a neat reduct.
\end{definition}

Since there are representable  algebras that are not neat reducts all algebras we are about to construct will be variation on themes
of representable algebras constructed in previous publications.
For completely representable algebras an atom structure is completely representable if it has at least one algebra having this atom structure
that is completely representable. This implies that all algebras based on its atom structure are comletely representable.

Here we show that this not the case with neat atom structures. For this puropse, we construct a weakly neat atom structure, that is not strongly neat.
Similar constructions were used to show that the class of 
neat reduct is not closed under forming subalgebras, and that there are isomorphic algebras that generate 
non -isomorphic ones in extra dimensiona, and that neat subreduct may not be full neat reducts, 
Such  construction, and several related ones, were also used to confirm unresolved  a conjecture of Tarski. 
The construction is joint with Istvan N\'emeti.

We put this construction to another new use,
and therefore we find it appropriate to give the details with several variants that  appeared in three  
publications of the author. 
The latter also shows in addition that there are representable cylindric algebras, satisfying the merry go round identies 
but cannat be a reduct of a $QEA$s.

\begin{theorem} For every ordinal $\alpha>1$, there exists a weakly neat atom structure of dimension $\alpha$, 
that is not strongly neat.
\end{theorem}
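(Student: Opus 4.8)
The plan is to realize the required atom structure as the common atom structure of two algebras: a ``large'' one that is a genuine neat reduct, and a ``small'' dense subalgebra that is not. Concretely, it suffices to produce atomic $\A$ and $\B$ with $\At\B=\At\A$, with $\A\in\Nr_\alpha\CA_{\alpha+\omega}$ and $\B\notin\Nr_\alpha\CA_{\alpha+\omega}$; then $\At\A$ witnesses weak neatness via $\A$ while failing strong neatness via $\B$. Since every atomic $\C$ satisfies $\Tm\At\C\subseteq\C\subseteq\Cm\At\C$, the natural way to keep the atom structure fixed while changing membership in $\Nr_\alpha\CA_{\alpha+\omega}$ is to take $\B$ to be a subalgebra of $\A$ that contains every atom of $\A$ but omits one carefully chosen infinite join.

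First I would build the large algebra $\A$ following the weak-set-algebra construction used in the publications cited above (joint with N\'emeti). Fix an infinite base $U$ and a point $p\in{}^{\alpha+\omega}U$, and form a weak set algebra $\C$ of dimension $\alpha+\omega$ with unit the weak space $V=\{s\in{}^{\alpha+\omega}U:|\{i:s_i\neq p_i\}|<\omega\}$, arranged so that $\A:=\Nr_\alpha\C$ is atomic. Because weak set algebras are representable, $\C\in\RCA_{\alpha+\omega}\subseteq\CA_{\alpha+\omega}$, and $\A$ is by construction a full neat reduct, so $\At\A$ is already weakly neat. I would then record an explicit list of the atoms of $\A$ together with a distinguished infinite set $R$ of atoms whose join $y:=\sum R$ exists in $\A$.

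Next I would define $\B$ to be the subalgebra of $\A$ generated by $\At\A$ together with the finite--cofinite elements, chosen so that every atom of $\A$ lies in $\B$ but $y\notin\B$. Since $\B$ is a subalgebra containing all atoms of $\A$, every nonzero element of $\B$ lies above an atom, so $\B$ is atomic, dense in $\A$, and $\At\B=\At\A$. Thus $\A$ and $\B$ share the single atom structure $\At\A$, and the whole problem reduces to verifying that $\B$ is not a neat reduct.

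The hard part will be showing $\B\notin\Nr_\alpha\CA_{\alpha+\omega}$; this is exactly where the spare dimensions do the work. Suppose toward a contradiction that $\B=\Nr_\alpha\D$ for some $\D\in\CA_{\alpha+\omega}$. The point is that the omitted element $y$ is recoverable from elements already in $\B$ using operations indexed by a dimension $\geq\alpha$: one exhibits an element $z\in\B$ and indices with $y=\sub{i}{j}z$ for suitable $i<\alpha\leq j$, using the complete additivity of the substitution operations established in Lemma~\ref{lem:atoms2}. Since a genuine neat reduct $\Nr_\alpha\D$ is closed under every such $\D$-definable $\alpha$-dimensional operation, this would force $y\in\B$, contradicting the construction of $\B$; note that it is enough to run this argument with a single extra dimension, so one in fact obtains $\B\notin\Nr_\alpha\CA_{\alpha+1}$, which is stronger. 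The crux, and the step demanding the most care, is arranging $\C$ so that $y$ is simultaneously (i) an honest infinite join that can be deliberately excluded from the generated subalgebra $\B$, and (ii) expressible inside $\D$ from parameters lying in $\B$; balancing these two requirements is precisely what the N\'emeti construction achieves. I would carry it out uniformly for every $\alpha>1$ by the weak-space argument, which is insensitive to whether $\alpha$ is finite or infinite.
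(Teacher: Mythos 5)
Your high-level architecture coincides with the paper's: a single atom structure carried by two algebras, a large one that is a genuine neat reduct (in the paper this is, implicitly, the complex algebra of the atom structure, i.e.\ the full weak set algebra on the weak space $V$ over a vector space) and a subalgebra containing all the atoms that is not (the paper's $\A=\Sg^{\C}\{y,y_s:s\in y\}$, with $y$ a hyperplane and the $y_s$ singletons). The genuine gap lies in the only step carrying real weight: the proof that the small algebra is not a neat reduct. Your mechanism --- exhibit $z\in\B$ with $y={\sf s}^i_j z$ and invoke closure of $\Nr_\alpha\D$ under $\D$-definable operations --- cannot work as stated. The omitted element $y$ is a concrete set, whereas ${\sf s}^i_j z$ is computed in an arbitrary abstract $\D$ with $\Nr_\alpha\D=\B$; its value is therefore an element of $\B$, so it cannot literally equal $y\notin\B$, and absent an argument identifying the abstract term value with the concrete omitted set there is no contradiction at all. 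The bridge the paper builds is a least-upper-bound argument: the term $\tau_{\alpha}(x,y)={\sf c}_{\alpha}({\sf s}_{\alpha}^{1}{\sf c}_{\alpha}x\cdot{\sf s}_{\alpha}^{0}{\sf c}_{\alpha}y)$, applied to the generator $y$ of the algebra, is shown by equational reasoning valid in every $SC_{\alpha+1}$ (using rectangularity, $\alpha$-closedness and atomicity) to evaluate to a least upper bound of the explicit family $Y=\{\tau(a,b):a,b\in X\}$ of atoms; because the algebra contains all singletons and co-singletons, any such least upper bound is forced to equal the concrete union $w=\bigcup Y$ (a second hyperplane); and finally $w$ is proved to lie outside the algebra. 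Note that this requires the small algebra's generating set to contain an infinite join of atoms, namely $y$ itself: your $\B$, generated only by the atoms and finite--cofinite elements, offers the extra dimension no parameters from which to manufacture a missing join, so the forcing argument has nothing to act on. Your appeal to Lemma~\ref{lem:atoms2} is also misplaced; complete additivity of substitutions serves the games of Section~2 and does not identify abstract term values with concrete suprema.

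The second half of your plan is assumed by fiat: that $\B$ can be ``chosen so that'' it contains every atom yet omits the join. In the paper this is the longest portion of the proof: one verifies that the algebra generated by $y$ and the singletons, when closed under all cylindrifications and substitutions, never produces $w$ (the $G^{**}$ analysis), and this ultimately rests on the linear-algebra fact that over a field of characteristic $0$ a hyperplane cannot be covered by finitely many other hyperplanes unless it is one of them. Until both of these steps are carried out, what you have is a plausible plan whose two requirements --- your (i) and (ii), which you yourself flag as the crux --- are precisely the content of the theorem, not ingredients available off the shelf.
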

\begin{demo}{Proof}
Let $\alpha>1$ and $\F$ is field of characteristic $0$. 
Let 
$$V=\{s\in {}^{\alpha}\F: |\{i\in \alpha: s_i\neq 0\}|<\omega\},$$
Note that $V$is a vector space over the field $\F$. 
We will show that $V$ is a weakly neat atom structure that is not strongly neat.
Indeed $V$ is a concrete atom structure $\{s\}\equiv _i \{t\}$ if 
$s(j)=t(j)$ for all $j\neq i$, and
$\{s\}\equiv_{ij} \{t\}$ if $s\circ [i,j]=t$.
Let $y$ denote the following $\alpha$-ary relation:
$$y=\{s\in V: s_0+1=\sum_{i>0} s_i\}.$$
Note that the sum on the right hand side is a finite one, since only 
finitely many of the $s_i$'s involved 
are non-zero. 
For each $s\in y$, we let 
$y_s$ be the singleton containing $s$, i.e. $y_s=\{s\}.$ 
Define 
${\A}\in WQEAs_{\alpha}$ 
as follows:
$${\A}=\Sg^{\C}\{y,y_s:s\in y\}.$$
We shall prove that  
$$\Rd_{SC}\A\notin \Nr_{\alpha}SC_{\alpha+1}.$$ 
That is for no $\mathfrak{P}\in SC_{\alpha+1}$, it is the case that $\Sg^{\C}X$ exhausts the set of all $\alpha$ dimensional elements 
of $\mathfrak{P}$. 
So assume, seeking a contradiction,  that $\Rd_{SC}{\A}\in \Nr_{\alpha}SC_{\alpha+1}$. 
Let $X=\{y_s:s\in y\}$. 
Of course every element of $X,$ being a singleton, is an atom.
Next we show that $\A$ is atomic, i.e evey non-zero element contains a minimal non-zero element.  
Towards this end, let $s\in {}^{\alpha}\F^{(\bold 0)}$ be an arbitrary sequence. 
Then 
$$\langle s_0,s_0+1-\sum_{i>1}s_i,s_i\rangle_{i>1}$$ 
and 
$$\langle \sum_{0< i<\alpha}s_i-1,s_i\rangle_{i\geq 1}$$ are elements in y.
Since 
$$\{s\}={\sf c}_1\{\langle s_0,s_0+1-\sum_{i>1}s_i,s_i\rangle_{i>1}\}\cap
{\sf c}_0\{\langle \sum_{0\neq i<\alpha}s_i-1,s_i\rangle_{i\geq 1}\},$$ 
it follows
that $\{s\}\in A$. 
We have shown that $A$ contains all singletons, hence is atomic. Call an element rectangular if ${\sf c}_0x\cap {\sf c}_1x=x$.
As easily checked, every singleton is rectangular.
Also $y=\bigcup X$, and so $y=\sup X$ exists in ${\A}$.
Finally, let $x=\{s\}$ be an atom of ${\A}$.
Then 
$${\sf c}_0x\cap y=\{\langle \sum_{i>0}s_i-1,s_i\rangle_{i>0}\}$$ 
and
$${\sf c}_1x\cap y=\{\langle s_0,s_0+1-\sum_{i>1}s_i,s_i\rangle_{i>1}\};$$ 
which are singletons, hence atoms.
Let $$\tau(x,y) ={\sf c}_1({\sf c}_0x\cdot {\sf s}_1^0{\sf c}_1y)\cdot {\sf c}_1x\cdot {\sf c}_0y.$$
Let $$Y=\{\tau(a,b):a,b\in X\}.$$
We now show that, given that $\A$ is a neat reduct, $k=\sup Y$ exists in $\A$. 
Let $$\tau_{\alpha}(x,y) = {\sf c}_{\alpha}({\sf s}_{\alpha}^1{\sf c}_{\alpha}x\cdot {\sf s}_{\alpha}^0{\sf c}_{\alpha}y).$$
Then $\tau_{\alpha}(x,y)\leq \tau({\sf c}_{\alpha}x,{\sf c}_{\alpha}y)$
and $x,y$ are $\alpha$-closed and rectangular, then
$\tau_{\alpha}(x,y)=\tau(x,y)$.
Indeed, 
computing we get
$$\tau_{\alpha}(x,y)={\sf c}_{\alpha}
({\sf s}_{\alpha}^1{\sf c}_{\alpha}x\cap {\sf s}_{\alpha}^0{\sf c}_{\alpha}y)$$ 
$$\leq {\sf c}_{\alpha}({\sf s}_{\alpha}^1
({\sf c}_0{\sf c}_{\alpha}x\cap {\sf c}_1{\sf c}_{\alpha}x)\cap {\sf s}_{\alpha}^0
({\sf c}_0{\sf c}_{\alpha}y\cap {\sf c}_1{\sf c}_{\alpha}y))$$
(Here, equality holds if $x,y$ are $\alpha$-closed and rectangular)
$$={\sf c}_{\alpha}({\sf s}_{\alpha}^1{\sf c}_0{\sf c}_{\alpha}x\cap 
{\sf s}_{\alpha}^1{\sf c}_1{\sf c}_{\alpha}x\cap {\sf s}_{\alpha}^0
{\sf c}_0{\sf c}_{\alpha}y\cap {\sf s}_{\alpha}^0{\sf c}_1{\sf c}_{\alpha}y).$$
$$={\sf c}_{\alpha}({\sf s}_{\alpha}^1{\sf c}_0{\sf c}_{\alpha}x\cap {\sf c}_1{\sf c}_{\alpha}x\cap {\sf c}_0{\sf c}_{\alpha}y\cap {\sf s}_{\alpha}^0{\sf c}_1{\sf c}_{\alpha}y)$$
$$={\sf c}_{\alpha}({\sf s}_{\alpha}^1{\sf c}_0{\sf c}_{\alpha}x\cap {\sf s}_{\alpha}^0{\sf c}_1{\sf c}_{\alpha}y)\cap {\sf c}_1{\sf c}_{\alpha}x\cap {\sf c}_0{\sf c}_{\alpha}y$$
$$={\sf c}_{\alpha}({\sf s}_{\alpha}^1{\sf c}_0{\sf c}_{\alpha}x\cap {\sf s}_{\alpha}^1{\sf s}_1^0{\sf c}_1{\sf c}_{\alpha}y)\cap {\sf c}_1{\sf c}_{\alpha}x\cap {\sf c}_0{\sf c}_{\alpha}y$$
$$={\sf c}_{\alpha}{\sf s}_{\alpha}^1({\sf c}_0{\sf c}_{\alpha}x\cap {\sf s}_1^0{\sf c}_1{\sf c}_{\alpha}y)\cap {\sf c}_1{\sf c}_{\alpha}x\cap {\sf c}_0{\sf c}_{\alpha}y$$
$$={\sf c}_1{\sf s}_1^{\alpha}({\sf c}_0{\sf c}_{\alpha}x\cap {\sf s}_1^0{\sf c}_1{\sf c}_{\alpha}y)\cap {\sf c}_1
{\sf c}_{\alpha}x\cap {\sf c}_0{\sf c}_{\alpha}y$$
$$={\sf c}_1{\sf s}_1^{\alpha}{\sf c}_{\alpha}({\sf c}_0{\sf c}_{\alpha}x\cap {\sf s}_1^0{\sf c}_1{\sf c}_{\alpha}y)\cap {\sf c}_1
{\sf c}_{\alpha}x\cap {\sf c}_0{\sf c}_{\alpha}y.$$
$$={\sf c}_1{\sf c}_{\alpha}({\sf c}_0{\sf c}_{\alpha}x\cap {\sf s}_1^0{\sf c}_1{\sf c}_{\alpha}y)
\cap {\sf c}_1{\sf c}_{\alpha}x\cap {\sf c}_0{\sf c}_{\alpha}y$$
$$={\sf c}_1({\sf c}_0{\sf c}_{\alpha}x\cap {\sf s}_1^0{\sf c}_1{\sf c}_{\alpha}y)
\cap {\sf c}_1{\sf c}_{\alpha}x\cap {\sf c}_0{\sf c}_{\alpha}y$$
$$=\tau({\sf c}_{\alpha}x,{\sf c}_{\alpha}y).$$  
Let $k=\tau_{\alpha}^{\B}(y,y)$, 
where ${\B}$ is any algebra in $SC_{\alpha+1}$
such that $\Rd_{SC}{\A}=\Nr_{\alpha}\B$. Now $k\in A$.
Let $a,b\in X$, and $i\in \{0,1\}.$ 
SiSince ${\sf s}_{\alpha}^i$ is a boolean endomorphism for $i<\alpha$
we have $${\sf s}_{\alpha}^0a\leq {\sf s}_{\alpha}^0y\text { and }
{\sf s}_{\alpha}^1b\leq {\sf s}_{\alpha}^1y.$$
But $a,b$ are rectangular and together with $y$ are $\alpha$-closed,
we obtain that
$$\tau(a,b)={\sf c}_{\alpha}({\sf s}_{\alpha}^1a\cap {\sf s}_{\alpha}^0b)
\leq {\sf c}_{\alpha}({\sf s}_{\alpha}^1y\cap {\sf s}_{\alpha}^0y)=k.$$
This shows that $k\in B$, is an upper bound of $Y$.
We claim  that $k$ is in fact the least such upper bound, i.e. that $k=\sup Y.$
Towards this end, suppose that $t$ is an upper bound of $Y$ in $\A$.
We have to show that $k\leq t$.
Assume, to the contrary, that this is not the case, i.e. that $k-t\neq 0$.
Since ${\A}$ is atomic, there is an atom $z$ say, below $k-t$ 
i.e.  $$z\leq k\text { and }z\leq -t.$$                            
From $0\neq z\leq w={\sf c}_{\alpha}({\sf s}_{\alpha}^1y\cap {\sf s}_{\alpha}^0y)$, and $z$
is $\alpha$-closed, it readily follows that 
$$z\cap ({\sf s}_{\alpha}^1y\cap {\sf s}_{\alpha}^0y)\neq 0$$ thus 
$${\sf c}_0z\cap {\sf c}_1z\cap ({\sf s}_{\alpha}^1y\cap {\sf s}_{\alpha}^0y)\neq 0.$$
From ${\sf s}_{\alpha}^i{\sf c}_iz={\sf c}_iz$, and the fact that the ${\sf s}_{\alpha}^i$'s are
boolean endomorphisms for each $i<\alpha$, we get  
$${\sf s}_{\alpha}^1({\sf c}_1z\cap y)\cap {\sf s}_{\alpha}^0({\sf c}_0z\cap y)\neq 0.$$
Now let $a= {\sf c}_1z\cap y$. Then  
$a$ is a singleton, hence $a\in At{\C}$. Also $a\leq y$.
But $y=\sup X$ and $a\cap y=a\neq 0$, therefore $a\cap x\neq 0$, for some
atom $x\in X$, and so $a=x\in X$.
Analogously, if $b={\sf c}_0z\cap y$, then $b\in X$.
Being atoms, $a$ and $b$ are rectangular, and therefore we have from
$$\tau(a,b)={\sf c}_{\alpha}({\sf s}_{\alpha}^1({\sf c}_1z\cap y)\cap {\sf s}_{\alpha}^0({\sf c}_0z\cap y)).$$
and that  
$\tau(a,b) > 0.$

On the other hand, by invoking definitions and the fact that $z$ is 
rectangular a straightforward computation yields:
$$\tau(a,b)=\tau({\sf c}_1z\cap y, {\sf c}_0z\cap y)$$
$$= {\sf c}_1({\sf c}_0({\sf c}_1z\cap y)\cap {\sf s}_1^0{\sf c}_1({\sf c}_0z\cap y))\cap {\sf c}_1({\sf c}_1z\cap y)\cap 
{\sf c}_0({\sf c}_0z\cap y)).$$
$$={\sf c}_1({\sf c}_0({\sf c}_1z\cap y)\cap {\sf s}_1^0{\sf c}_1({\sf c}_0z\cap y))\cap 
({\sf c}_1z\cap {\sf c}_1y)\cap ({\sf c}_0z\cap {\sf c}_0y).$$
$$={\sf c}_1({\sf c}_0({\sf c}_1z\cap y)\cap {\sf s}_1^0{\sf c}_1({\sf c}_0z\cap y))\cap 
({\sf c}_0z\cap {\sf c}_1z)\cap ({\sf c}_0y\cap {\sf c}_1y).$$
$$={\sf c}_1({\sf c}_0({\sf c}_1z\cap y)\cap {\sf s}_1^0{\sf c}_1({\sf c}_0z\cap y))\cap z\cap ({\sf c}_0y\cap {\sf c}_1y).$$
That is, $0<\tau(a,b)\leq z$. But $z$ is an atom, and so $z=\tau(a,b)$.
Since $t$ is an upper bound for $Y$, we get $z\leq t$.
But we have $z\leq -t$. Hence $z=0$ which is impossible,
since $z$ is an atom. 
This means that $k\leq t$, and so $k=\sup Y$ as required. 
We will determine $k.$ 
We start by evaluating $\tau(a,b)$ for $a,b\in X$.
Towards this end, let $a,b\in X$. Assume that $a=y_r$ and $b=y_t$ 
with 
$$r=\langle r_i:i<\alpha\rangle\text { and }t=\langle t_i:i<\alpha\rangle$$ 
are elements in 
$y$.
$$\text { If } r_1\neq t_0\text { or } r_i\neq t_i \text { for some }i>1, \text { then }
\tau(y_r,y_s)=0.$$
Else, $r_1=t_0$ and $r_i=t_i$ for all $i>1$.
In this case, 
$$\tau(y_r,y_t)=\{s\}\text { where }s_0=r_0, \text { and } s_i=t_i \text { for all } i>0.$$
Moreover, an easy computation shows that 
$$s_0+2=s_1+2\sum_{i>1}s_i.$$
Now let $w$ be the set of all such elements, i.e.
$$w=\{s\in {}^{\alpha}\F^{(\bold 0)}: s_0+2=s_1+2\sum_{i>1}s_i\}.$$
Let $s\in w$. Then putting 
$$r=\langle s_0,s_0+1-\sum_{i>0}s_i,s_i\rangle_{i>1}$$ 
and 
$$t=\langle s_0+1-\sum_{i>0}s_i,s_1,s_i\rangle_{i>1},$$ 
we get $r,t\in y$ and $\tau(y_r,y_s)=\{s\}$.
We have shown that $$Y\sim \{0\}=\{\{s\}:s\in w\},$$ 
and so $$w=\bigcup Y.$$
Now clearly $w\leq k$. We will show that $k\leq w$.
But this easily follows from the fact that $A$ contains all singletons.
In more detail, let $z\in {}^{\alpha}\F^{(\bold 0)}\sim w$. 
Then $\{z\}$, and hence $^{\alpha}\F^{(\bold 0)}\sim \{z\}\in A$. 
Since $w\subseteq ^{\alpha}\F^{(\bold 0)}\sim \{z\}$, 
we have $\tau(a,b)\subseteq {}^{\alpha}\F^{(\bold 0)}\sim \{z\}$, 
for every $a,b\in X$, i.e. 
$^{\alpha}\F^{(\bold 0)}\sim \{z\}$ is an upper bound of of $Y$. 
But this means that $k\subseteq  V\sim \{z\}$, i.e. $z\notin k$.
We have proved that $k\leq w$.  
Next we proceed to show that, in fact, $w\notin A$. 
(That is $w$ is a new $\alpha$ dimensional element).
This contradiction will show that $\Rd_{SC}\A\notin \Nr_{\alpha}SC_{\beta}.$
Let
$$Pl=\{\{s\in {}^{\alpha}\F^{(\bold 0)}:t+\sum (r_is_i)=0\}:\{t,r_i:i<\alpha\}\subseteq \F\}.$$
$Pl$ consists of all hyperplanes of dimension $\alpha$. 
For $i<\alpha$, let 
$$q_i=\{s\in {}^{\alpha}\F^{(\bold 0) }:s_i+1=\sum_{j\neq i}s_j\}.$$
Let $i\in \alpha$ and $k,l\in \alpha$ be distinct.
Then if $i\in \{k,l\}$, $i=k$ say, then 
$${\sf s}_{kl}q_i=q_l.$$
Else, we have $i\notin \{k,l\}$, so that $i,k,l$ are pairwise distinct,
in which case we have 
$${\sf s}_{kl}q_i=q_i.$$ 
Now let 
$$Pl^S=\{q_i:i<\alpha\}.$$ 
Then by the above $Pl^S$ is closed under the operations
${\sf s}_{kl}$ for all $k,l\in \alpha$. 
Notice that $y=q_0\in Pl^S$. Thus $Pl^S$
consists of all hyperplanes obtained by interchanging the $k,l$ co-ordinates
of $y$, for all $k,l\in \alpha$, i.e. implementing the operation ${\sf s}_{kl}$. 
Here the superscript
$S$ is short for substitutions (corresponding to replacements) 
reflecting the fact that  $Pl^S$ consists of {\it substituted}  versions of $y$. Let
$$Pl^<=\{p\in Pl: {\sf c}_ip=p, \text { for some }i<\alpha\}.$$
If $p\in Pl^<$ and ${\sf c}_ip=p$, then 
$p$ is a hyperplane parallel to the $i$-th axis.

Note  that for $p\in Pl$, $p=\{s\in {}^{\alpha}\F^{(\bold 0)}:t+\sum_ir_is_i=0\}$ say, then
${\sf c}_ip=p$ (i.e. $p$ is parallel to the $i$-th axis) iff $r_i=0$. 
Note too, that if $p\in Pl^<$ and $k,l\in \alpha$ then ${\sf s}_{kl}p\in Pl^<$.

In the following we summarize the above, 
and state some other facts easy to check as well.
$$(1)\ \ y=q_0\in Pl^S.$$
$$(2)\  \ Pl^S\cup \{w,{\sf d}_{ij}:i,j\in \alpha\}\subseteq Pl.$$
$$(3)\ \text { If }q\in Pl^S (Pl^<) \text { and }i,j\in \alpha, 
\text { then } {\sf s}_{ij}q\in Pl^S(Pl^<).$$
$$(4)\  \ Pl^S\cap Pl^<=\emptyset, w\notin Pl^<\text { and }{}^{\alpha}\F^{(\bold 0)}\in Pl^<.$$
$$(5)\ \{{\sf d}_{ij}:i\neq j, i,j\in \alpha\}\subseteq Pl^<\text { iff }\alpha\geq 3.$$

For further use, we shall need:

\begin{athm}{Notation} 

\begin{enumroman}

\item Let $X$ be a set. Then $Z\subseteq_{\omega}X$ abbreviates 
``$Z$ is a finite subset $X$". $\wp_{\omega}X$ denotes all finite subsets of $X$, i.e.
$$\wp_{\omega}X=\{Z\in \wp(X): Z\subseteq_{\omega} X\}.$$

\item Let ${\A}\in K_{\alpha}$, $a\in A$ and $\Delta\subseteq_{\omega}\alpha$.
Then ${\sf c}_{(\Delta)}a={\sf c}_{{i_0}}\cdots {\sf c}_{i_{n-1}}a$, 
where $\Delta=\{i_0,\cdots i_{n-1}\}$. 
Because cylindrifications commute, 
the definition of ${\sf c}_{(\Delta)}$ does not depend on any
linear order defined on $\Delta$.  
When $n=0$, i.e. $\Delta$ is empty, then ${\sf c}_{(\Delta)}a={\sf c}_{\emptyset}a:=a$. 
\item To simplify notation, from now on we may write $-a$ 
instead of $^{\alpha}\F^{(\bold 0)}\sim a.$
It will be clear from context whether $-$ refers to the operation of substraction in $\F$
or the operation of complementation in $\A.$
We also write $1^{\A}$, or just $1,$ 
instead of the more cumbersome 
$^{\alpha}\F^{(\bold 0)}.$

\end{enumroman}

\end{athm}

{\bf Some more definitions.}
$$G=\{q,-q,\ p,-p,\ {\sf c}_{(\Delta)}\{\bold 0\},
-{\sf c}_{(\Delta)}\{\bold 0\}:\ q\in Pl^S,\ p\in Pl^<\cup \{{\sf d}_{01}\},\ \Delta\subseteq_{\omega} \alpha,\  0\in \Delta\}.$$
Forming finite intersections of elements in $G$, we let
$$G^*=\{\bigcap_{i\in n}g_i:n\in \omega,g_i\in G\},$$
and forming finite unions of elements in $G^*$, we let
$$G^{**}=\{\bigcup_{i\in n}g_i:n\in \omega, g_i\in G^*\}.$$
It is easy to see that $\{y,y_s:s\in y\}\subseteq G^{**}$, and $G^{**}$ 
is a boolean field of sets.
We start by proving that $w\notin G^{**}$. To this end, we set:
$$L =\{p\in Pl^<: {\sf c}_0p\neq p\}\text { and } P(0)=L\cup \{{\sf d}_{01}\}.$$
Notice that $L = P(0)$ iff $\alpha>2$.
$L$ stands for the set of ``lines" 
not parallel to the $0$th axis. 
Being in $Pl^<$, any such line is parallel to some other axis. 
When $\alpha=2$, i.e. in the  plane $\F\times \F$, these 
are precisely the lines that are parallel to the ``$y$ axis".
Next we define
$$G_1=\{g\in G^*:g\subseteq q,\text{ for some }q\in Pl^S\}$$ and
$$G_2=\{g\in G^*:g\not\subseteq q,\text{ for all }q\in Pl^S \text 
{ and }g\subseteq p, \text{ for some } p\in P(0)\}.$$ 
Note that $Pl^S\subseteq G_1$ and that $P(0)\subseteq G_2$.
Note too, that $G_1\cap G_2=\emptyset$.
Now let 
$$G_3=\{p_1\cap p_2\ldots \cap p_k:\ k\in \omega,\ 
\{p_1,p_2,\ldots, p_k\}\subseteq G\sim (Pl^S\cup P(0))\}.$$
It is easy to see that $G^*=G_1\cup G_2\cup G_3$.
To prove that $w\notin G^{**}$ we need:

\begin{athm}{Claim} If $g\in G_3$ and $0\neq g$, then $g\not\subseteq w$.
\end{athm}

{\bf Proof of Claim}

Assume that $g=p_1\cap p_2\ldots  \cap p_k$ say, with 
$p_i\in G$ and $p_i\notin (Pl^S\cup P(0))$
for $1\leq i\leq k$, and let $z\in g.$
Then for $1\leq i\leq k$, 
we have in addition that 
$$p_i\in \{p,-p, {\sf c}_{(\Delta)}\{\bold 0\}, -{\sf c}_{(\Delta)}\{\bold 0\}, -q: p\in Pl^<\cup \{{\sf d}_{01}\}, q\in Pl^S, \Delta\subseteq_{\omega}\alpha,
0\in \Delta\}.$$ 
Let $[{}]$ be the function from $G$ into $\wp(\F)$ defined as follows: 
$$[p] = \{1/r_0(-t-\sum_{0\neq i<\alpha} r_iz_i)\} \text { if }p=-\{s\in ^{\alpha}\F^{(\bold 0)}:t+\sum_{i<\alpha} r_is_i=0\}, 
r_0\neq 0.$$ 
Else  
$$[p]=0.$$
Let $$r\in \F\sim ((\bigcup_{1\leq i\leq k}[p_i])\cup [-w])$$ 
be arbitrary,
and let $$z_r^0=z\sim \{(0,z_0)\}\cup \{(0,r)\}.$$
By the choice of $r\notin [-w]$ we have
$$r\neq -2+z_1+2\sum_{i>1} z_i.$$ 
Hence $z_r^0\notin w$. If $p_i=-q$ with  $q\in Pl^S$, then similarly, $r\notin [-q]$ hence $z_r^0\notin p_i$.
Now assume that $p_i\in Pl^<$. Then we have $p_i\notin P(0)$ hence ${\sf c}_0p_i=p_i.$
Since $z\in p_i$ it follows that $z_r^0\in {\sf c}_0p_i.$ But $p_i\notin P(0)$ hence $z_r^0\in {\sf c}_0p_i=p_i.$
If $p_i=-p$ where $p\in Pl^<$, then $z\notin p$ and so $z_r^0\notin p$ since again ${\sf c}_0p=p$.
That is if $z_r^0\in p$ then $z$ differing from $z_r^0$ in atmost the $0$th place would also be in $p$.
Now assume that $p_i={\sf c}_{(\Delta)}\{\bold 0\}$. Then if $z\in p_i$ then $z_r^0\in p_i$ since ${\sf c}_0p_i=p_i$ by $0\in \Delta$.
Finally if $p_i=- {\sf c}_{(\Delta)}\{\bold 0\}$ then $z\notin {\sf c}_{(\Delta)}\{\bold 0\}$ iff $z_r^0\notin {\sf c}_{(\Delta)}\{\bold 0\}$ by $0\in \Delta$. 
We have shown that 
$$z_r^0\in g\sim w\text { i.e. }g\not\subseteq w.$$

We now proceed to show that $w\notin G^{**}$.
Assume to the contrary, that 
$$w=\bigcup\{g_i^1:i<n_1\}\cup\bigcup\{g_i^2:i<n_2\}\cup\bigcup \{g_i^3:i<n_3\}$$ 
where 
$$\{g_i^j:i<n_j\}\subseteq G_j\text { for }j\in \{1,2,3\}.$$
By the above, we have $g_i^3=0$, for all $i<n_3$.
From the definition of $G_1$ and $G_2$ , it follows that
$$w\subseteq \bigcup\{q_j:j<n_1\}\cup\bigcup \{p_j:j<n_2\}$$ 
where 
$$\{q_j:j<n_1\}\subseteq
Pl^S$$ 
and 
$$\{p_j:j<n_2\}\subseteq P(0).$$
Since $w\subseteq -{\sf d}_{01}$ for $\alpha=2$, and $P(0)=L$ for $\alpha>2$ 
we can, and will, assume that
$$\{p_j:j<n_2\}\subseteq L.$$
Before proceeding, let us take a special case as an illustration. Assume that $\alpha=3$.
Then each $p_j$ is a hyperplane that is determined by an equation.
Assume that the following equations determine the planes $p_j$ $j<n_2$.
$$(1)\ \ \ \ t_0+r_{00}x_0+r_{01}x_1+r_{02}x_2=0$$
$$\ \ \ \ t_1+r_{10}x_0+r_{11}x_1+r_{12}x_2=0$$
$$\cdots$$
$$\cdots$$
$$\ \ \ \ t_{n_2-1} +r_{n_2-1,0}x_0+ r_{n_2-1, 2}x_2=0.$$
In each of these equation one of the coefficients other than the zeroth is equal to $0$. The zeroth coefficient is {\it not} zero.
Therefore, these equations determine hyperplanes parallel to one of the axis
other than the zeroth axis.
Now consider the equations
$$(2)\ \ \ \ x_0+2=x_1+2x_2$$ 
and
$$(3)\ \ \ \ x_{\pi(0)}+1=x_{\pi(1)}+x_{\pi(2)}$$
where $\pi$ is a permutation of $\{0,1,2\}$.
$(3)$ consists of $6$ equations, only three of which are distinct (because of commutativity). Such equations 
represent the $q_i$'s.
Then it can be easily seen that there is an $s$ that satisfies 
(2) but does not satisfy (the equations in) $(1)$ and $(3)$.

And indeed in the general case, it can be seen by implementing easy linear algebraic 
arguments that, for every $n\in \omega$, 
for every $m\in \omega\cap (\alpha+1)$,
and for every  system
$$t_0+\sum(r_{0i}x_i)=0$$
$$\cdots$$
$$\cdots$$
$$t_n+\sum(r_{ni}x_i)=0,$$
of equations, such that for all $j\leq n$, there exists $i<\alpha$, such that
$$r_{ji}=0\text { and }r_{j0}\neq 0,$$
the equation 
$$x_0+2=x_1+2\sum x_j$$ 
has a solution $s$ in the weak space
$^{\alpha}\F^{(\bold 0)}$, such that
$s\notin q_i$ for every $i<m$, and such that $s$ is not a solution of
$$t_j+\sum (r_{ji}x_i)=0,$$ for every $j\leq n$.
Geometrically (and intuitively) 
a finite union of hyperplanes 
cannot cover a hyperplane, 
unless it is one of them.
By taking $n=n_1$ and $m=n_2$ we get 
the desired conclusion.
Let us prove this  formally.  It clearly suffices to show that for all positive integers $m$, $n$ the equation 
$$x_0+2=x_1+2\sum_{1< i \leq m} x_i$$
 has a solution which is not a solution of any of the following equations:
$$(4)\ \ \ \ t_k+\sum_{j\leq m}r_{kj}x_j=0,$$
$$(5)\ \ \ \ x_l+1=\sum_{1\neq i\leq m}x_i,$$
where $k=1,\ldots , n$, each $r_{k0}\neq 0$, for each $k=1,\ldots , n$ there is a $j$ with $0<j\leq m$ such that $r_{kj}=0$, and $0\leq l\leq m.$
To do this, first substitute $x_0=-2+x_1+2\sum_{1<i\leq m}x_i$ into each of the equations $(4)$ and $(5)$, obtaining
$$(6) \ \ \ \ t_k-2r_{k0}+(r_{k0}+r_{k1})x_1+\sum_{1<j\leq m}(2r_{k0}+r_{kj})x_j=0,$$
$$(7) \ \ \ \ -1+\sum_{1<j\leq m} x_j=0,$$
$$(8) \ \ \ \ 3+\sum_{1<j\leq m}(- 3 x_j)=0,$$
$$(9)\ \ \ \ 3-2x_1-x_k+\sum_{j\leq m, j\notin \{0,1,k\}}(-3x_j)=0\text { for } k>1.$$
Now we define $s_1,\ldots s_m$ by recursion. Choose $s_1$ so that $$t_k-2r_{k0}+(r_{k0}+r_{k1})s_1\neq 0$$
for each $k$ such that $r_{k0}+r_{k1}\neq 0$. Having defined $s_t$ with $t<m$ choose $s_{t+1}$ so that
$$t_k-2r_{k0}+(r_{k0}+r_{k1})s_1+\sum_{1<j\leq t+1}(2r_{k0}+r_{kj})s_j\neq 0$$
for each $k$ for which $2r_{k0}+r_{k,t+1}\neq 0$; also, if $t+1=m$, assure that the equations $(7)- (9)$ all fail.
Finally let $s_0=-2+s_1+2\sum_{1<i}s_i.$
Clearly the desired conclusion holds. 
We have proved that $w\notin G^{**}$. .

To show that 
$w\notin A$, we will  show that $G^{**}$ is closed under 
the polyadic set operations.
It only remains to show that $G^{**}$ 
is closed under cylindrifications and substitutions, 
since by definition, it is a boolean field of sets and 
contains the diagonal elements. ( Recall that for $\alpha>2$, ${\sf d}_{ij}\in Pl^<$.)

\begin{enumarab}

\item $G^{**}$ is closed under cylindrifications.

It is enough to show that (since the ${\sf c}_i$'s are additive), that
for $j\in \alpha$ and $g\in G^*$ arbitrary, we have ${\sf c}_jg\in G^{**}$.
For this purpose, put for every $p \in Pl$
$$p(j|0)={\sf c}_j\{s\in p: s_j=0\}\text { and } (-p)(j|0)=-p(j|0).$$  
Then it is not hard to see that 
$$p(j|0)=\{s\in {}^{\alpha}\F^{(\bold 0)}:t+\sum_{i\neq j}(r_is_i)=0\},$$
if $$p=\{s\in {}^{\alpha}\F^{(\bold 0)}:t+\sum_{i<\alpha}(r_is_i)=0\},$$ 
 
Indeed assume that $s_1\in {\sf c}_j\{s\in p: s_j=0\}$.
Then there exists $s_2$ in $p$ such that $s_1$ and  $s_2$ agree at all components except the $j$th where 
$s_2(j)=0$.
Then $$0=t+\sum_{i\neq j} r_is_1(i)=t+\sum_{i\neq j}r_is_2(i).$$
The other inclusion is analogous. Assume that $s_1\in p$ and $r_j\neq 0$ such that $t+\sum_{i\neq j}r_is_1(i)=0.$
Then $s_1(j)=0$. Hence $s_1\in {\sf c}_j\{s\in P: s_j=0\}.$ 
It follows thus that  
$$p(j|0)\in Pl^<\text { for every }p\in Pl.$$

Now let $j$ and $g$ be as indicated above. We can assume that
$$g=q_1\cdots \cap q_l\cap p_1\cap\cdots \cap p_n\cap  -Q_1\cdots \cap 
-Q_L$$
$$\cap -P_1\cdots \cap -P_m\cap y\cap -{\sf c}_{(\Delta_1)}
\{\bold 0\}\cdots\cap -{\sf c}_{(\Delta_N)}\{\bold 0\},$$ 
where
$$l, L, n, m, N\in \omega, q_i, Q_i\in Pl^S, 
p_i, P_i\in Pl^<\cup \{{\sf d}_{01}\},$$ 
$${\sf c}_jp_i\neq p_i,\  {\sf c}_jP_i\neq P_i,\ {\sf c}_jq_i\neq q_i,\  {\sf c}_jQ_i\neq Q_i,$$ 
$$y\in \{{\sf c}_{(\Delta)}\{\bold 0\},1:\Delta\in \wp_{\omega}\alpha,\ 0\in \Delta,\ j\notin \Delta\},$$ 
and 
$$\{\Delta_1,\cdots,\Delta_n\}\subseteq \{x\in \wp_{\omega}\alpha:\ j\notin x, 0\in x\}.$$
This is so because $${\sf c}_j(x\cap {\sf c}_jy)={\sf c}_jx\cap {\sf c}_jy,$$
$$\text { if } j\notin \Delta x\text { then }{\sf c}_jx=x$$ 
and $${\sf c}_{(\Delta)}\{\bold 0\}\cap {\sf c}_{(\Gamma)}\{\bold 0\}={\sf c}_{(\Delta \cap \Gamma)}\{\bold 0\}.$$

We distinguish between $2$ cases:

{\bf Case 1.}
$$y={\sf c}_{(\Delta)}\{\bold 0\}\text { and }j\notin \Delta.$$ 
Then a lenghly but routine computation gives
$${\sf c}_j(q_1\cdots \cap q_l\cap -Q_1\cdots \cap 
-Q_L\cap p_1\cdots\cap p_n\cap -P_1\cdots \cap -P_m$$
$$\cap {\sf c}_{(\Delta)}\{\bold 0\}\cap -{\sf c}_{(\Delta_1)}\{\bold 0\}\cdots \cap 
-{\sf c}_{(\Delta_N)}\{\bold 0\})$$
$$=q_1(j|0)\cap q_l(j|0)\cap -Q_1(j|0)\cdots \cap 
-Q_L(j|0)\cap p_1(j|0)\cdots $$
$$\cap p_n(j|0)\cap -P_1(j|0)\cdots \cap -P_m(j|0)$$
$$\cap {\sf c}_j
{\sf c}_{(\Delta)}\{\bold 0\}\cap -{\sf c}_j{\sf c}_{(\Delta_1)}\{\bold 0\}\cdots \cap -
{\sf c}_j{\sf c}_{(\Delta_N)}\{\bold 0\}.$$
Indeed let $$s\in 
{\sf c}_j(q_1\cdots \cap q_l\cap -Q_1\cdots \cap 
-Q_L\cap p_1\cdots\cap p_n\cap -P_1\cdots \cap -P_m$$
$$\cap {\sf c}_{(\Delta)}\{\bold 0\}\cap -{\sf c}_{(\Delta_1)}\{\bold 0\}\cdots \cap 
-{\sf c}_{(\Delta_N)}\{\bold 0\}).$$
For a sequence $t$, we write $t_u^j$ for the sequence that agrees with $t$ except at $j$ where $t_u^j(j)=u$.
Now, by definition of cylindrifications, there exists $u\in \F$ such that 
$$s_u^j\in 
(q_1\cdots \cap q_l\cap -Q_1\cdots \cap 
-Q_L\cap p_1\cdots\cap p_n\cap -P_1\cdots \cap -P_m$$
$$\cap {\sf c}_{(\Delta)}\{\bold 0\}\cap -{\sf c}_{(\Delta_1)}\{\bold 0\}\cdots \cap 
-{\sf c}_{(\Delta_N)}\{\bold 0\}).$$
Now since $s_u^j\in q_1$ and $s_u^j\in {\sf c}_{(\Delta)}\{\bold 0\}$ and $j\notin \Delta$ it readily follows that $u=0$.
But then $s\in q_1(j|0).$ Similarly $s\in q_i(j|0)$ for every $i\leq l$ and $s\in p_i(j|0)$ for each $i\leq n$. It is clear that $s\in {\sf c}_j{\sf c}_{(\Delta)}\{\bold 0\}$.
Also $s\notin Q_i(j|0)$ for any $i\leq L$ for else $s_0^j\in Q_i$
which is not the case.
Same reasoning gives $s\notin P_i(j|0)$ for all $i\leq m$. 
Finally $s\notin {\sf c}_j{\sf c}_{(\Delta_i)}\{\bold 0\}$ for $i\leq N$, because $s_u^j\notin {\sf c}_{(\Delta_j)}\{\bold 0\}$.
Now conversely if we start with 
$$s\in q_1(j|0)\cap q_l(j|0)\cap -Q_1(j|0)\cdots \cap 
-Q_L(j|0)\cap p_1(j|0)\cdots $$
$$\cap p_n(j|0)\cap -P_1(j|0)\cdots \cap -P_m(j|0)$$
$$\cap {\sf c}_j
{\sf c}_{(\Delta)}\{\bold 0\}\cap -{\sf c}_j{\sf c}_{(\Delta_1)}\{\bold 0\}\cdots \cap -
{\sf c}_j{\sf c}_{(\Delta_N)}\{\bold 0\}.$$
Then $$s_0^j\in 
(q_1\cdots \cap q_l\cap -Q_1\cdots \cap 
-Q_L\cap p_1\cdots\cap p_n\cap -P_1\cdots \cap -P_m$$
$$\cap {\sf c}_{(\Delta)}\{\bold 0\}\cap -{\sf c}_{(\Delta_1)}\{\bold 0\}\cdots \cap 
-{\sf c}_{(\Delta_N)}\{\bold 0\}.)$$

{\bf Case 2.}
$$y=1$$
This case is harder, so we start with some special cases before embarking on the most general case.
Consider the case when
$$g=q \cap p
\cap - P\cap  -{\sf c}_{(\Delta)}\{\bold 0\},$$ 
where $q\in Pl^S$, and $p,P\in Pl^<$. Then 
$${\sf c}_jg={\sf c}_j(q\cap p)\cap {\sf c}_j(q-P)\cap {\sf c}_j(q-{\sf c}_{(\Delta)}\{\bold 0\}).$$
$$\cap {\sf c}_j(p -P)\cap {\sf c}_j(p-{\sf c}_{(\Delta)}\{\bold 0\}).$$
Let us prove this special case to visualise matters. The general case will be a little bit more involved, but all the same an immediate generalization.
Let $$s\in {\sf c}_j(q\cap p)\cap {\sf c}_j(q-P)\cap {\sf c}_j(q-{\sf c}_{(\Gamma)}\{\bold 0\})$$
$$\cap {\sf c}_j(p -P)\cap {\sf c}_j(p-{\sf c}_{(\Delta)}\{\bold 0\}).$$
Then there exist $u_1, u_2, u_3, u_4, u_5 \in \F$ such that 
$$s^j_{u_1}\in q\cap p, s^j_{u_2}\in q-P, s^j_{u_3}\in q-{\sf c}_{(\Gamma)}\{\bold 0\},$$ 
$$s^j_{u_4}\in p-P\text {  and  }s^j_{u_5}\in p-{\sf c}_{(\Delta)}\{\bold 0\}.$$
Now ${\sf c}_jp\neq p$ it follows that $u_4=u_5$. 
Similarly $u_1$ and $u_2$ and $u_3$ are also equal because ${\sf c}_jq\neq q$.
But note that $s^j_{u_1}\in p\cap q$ and ${\sf c}_jq\neq q$, 
it follows that all of the $u_i$'s are in fact equal to $u$ say.
It readily follows that $$s^j_u\in q \cap p
\cap - P\cap  -{\sf c}_{(\Delta)}\{\bold 0\},$$
hence
$$s\in {\sf c}_j(q \cap p
\cap - P\cap  -{\sf c}_{(\Delta)}\{\bold 0\}).$$
The other inclusion is much easior, in fact it is absolutely straightforward.
Start with  $$s\in {\sf c}_j(q \cap p
\cap - P\cap  -{\sf c}_{(\Delta)}\{\bold 0\}).$$
Then $$s^j_u\in (q \cap p
\cap - P\cap  -{\sf c}_{(\Delta)}\{\bold 0\}).$$
It follows that 
$$s^j_{u}\in q\cap p, s^j_{u}\in q-P, s^j_{u}\in q-{\sf c}_{(\Gamma)}\{\bold 0\},$$ 
$$s^j_{u}\in p-P\text {  and  }s^j_{u}\in p-{\sf c}_{(\Delta)}\{\bold 0\}.$$
The required follows.
Now assume (still considering a special case) that 
$$g=q\cap p_1\cap\cdots \cap p_n\cap \cdots \cap -P_1\cdots \cap -P_m$$
$$\cap -{\sf c}_{(\Delta_1)}
\{\bold 0\}\cdots\cap -{\sf c}_{(\Delta_N)}\{\bold 0\},$$ 
then by the same token
 $${\sf c}_jg=\cap_{k\leq n}{\sf c}_j(q\cap p_k)\cap \cap_{i\leq m}{\sf c}_j(q-P_i)\cap_{i\leq N}{\sf c}_j(q-{\sf c}_{\Delta_N)}\{\bold 0\})$$
$$\cap_{k\leq n}((\cap_{i\leq n}{\sf c}_j(p_k\cap p_i)\cap_{i\leq m}{\sf c}_j(p_k-P_i)\cap_{i\leq N}{\sf c}_j(p_k- {\sf c}_{(\Gamma)}\{\bold 0\})).$$
To illustrate matters further, paving the way for the general case, consider another special case
that is essentially different than the one just considered.
$$g=-Q\cap p_1\cdots p_n
\cap -P_1\cdots \cap -P_m$$
$$\cap -{\sf c}_{(\Delta_1)}
\{\bold 0\}\cdots\cap -{\sf c}_{(\Delta_N)}\{\bold 0\}.$$ 
Then 
$${\sf c}_jg=\cap_{k\leq n}{\sf c}_j(p_k-Q)$$
$$\cap_{k\leq n}((\cap_{i\leq n}{\sf c}_j(p_k\cap p_i)\cap_{i\leq m}{\sf c}_j(p_k-P_i)\cap_{i\leq N}{\sf c}_j(p_k-{\sf c}_{(\Delta_i)}\{\bold 0\})).$$
Now consider the general case.
We assume for better readability that 
 $$g=p_1\cap\cdots \cap p_n\cap \cdots \cap -P_1\cdots \cap -P_m$$
$$\cap -{\sf c}_{(\Delta_1)}
\{\bold 0\}\cdots\cap -{\sf c}_{(\Delta_N)}\{\bold 0\},$$ 
where $p_i, P_i\in Pl^<\cup Pl^S$. Here we have collected the planes in $Pl^<$ and $Pl^S$ together to simplify matters.
Now we have by the same reasoning as above
$${\sf c}_jg=\cap_{k\leq n}((\cap_{i\leq n}{\sf c}_j(p_k\cap p_i)\cap \cap_{i\leq m}{\sf c}_j(p_k-P_i)$$
$$\cap_{i\leq N}{\sf c}_j(p_k-{\sf c}_{(\Delta_i)}\{\bold 0\})).$$
Now for every 
$p,q\in Pl$, there are $p',q',p''$ and $q''\in Pl^<$
such that (*)
$${\sf c}_j(p\cap q)=p'\cap q',$$ 
$${\sf c}_j(p\sim q)=p''\sim q''$$ 
and if  $j\in \Delta p\sim \Gamma,$ 
then (**)
$${\sf c}_j(p \sim {\sf c}_{(\Gamma)}\{\bold 0\})=
{}^{\alpha}\F^{(\bold 0)}\sim p(j|0)
\cup (p(j|0)\sim {\sf c}_j{\sf c}_{(\Gamma)}\{\bold 0\}).$$
Let us consider (*). We give an illustration when $\alpha=2$
and $\F=\R$. In this case elements in $Pl^<$ are either the whole plane or horizontal lines or vertical lines.
If $p$ and $q$ are any two straight lines that are not parallel, then they intersect
at a point. If we cylindrify (this point) with respect to one of the co-ordinates we get a line that is indeed parallel to one of the axis.
Next, if we have two straight lines $p$ and $q$, say, that intersect and neither is parallel to one of the axis, then $p-q$ will be $p$ ``minus" one point (the point of intersection). 
If we cylindrify with respect to one of the co-ordinates then we get the plane
minus the straight line that is parallel to one of the axis; call this straight line $p'$. 
But this is nothing more than $1-p'$.  On the other hand, if $p$ is parallel to one of the axis and we cylindrify $p$
in the direction of this axis, we get $p-q$ where $q$ is the line orthogonal to $p$ and passes by the missing point, hence $q$ is parallel to the other axis.
If $p$ and $q$ are parallel
then $p-q=p$ and so for $j\in \{0,1\}$, we have ${\sf c}_j(p-q)={\sf c}_jp\in Pl^<$.

Now we turn to (**). Assume that $p=\{s\in {}^0\F^{(\bold 0)}:  t+\sum_{i}r_is_i=0\}$. Let $j\in \Delta p\sim \Gamma$. 
Let $s\in {\sf c}_j(p-{\sf c}_{(\Gamma)}\{\bold 0\})$. Then there is some
$u\in \F$ such that $s_u^j\in p$ and $s_u^j\notin  {\sf c}_{(\Gamma)}\{\bold 0\}$.
Since $j\in \Delta p$ we have ${\sf c}_jp\neq p$ hence $r_j\neq 0$. Let $u=1/r_j(t+\sum_{j\neq i}r_is_i)$.
Then we have two cases. If $u\neq 0$, then $s\notin p(j|0)$ for else there exists $q\in p$ such that $q$ differs from $s$ in at most the $j$th component and $q(j)=0$.
But $q(j)=-1/r_0(t+\sum_{i\neq j}r_is_i)=u$. This would imply that $u=0$ which is a contradiction.
Else $u=0$. Then $s\in p(j|0)$ because $s_u^j\in p$ and $s_u^j(j)=0$.
Also $s\notin {\sf c}_j{\sf c}_{(\Gamma)}\{\bold 0\}$ for else $s_u^j\in {\sf c}_{(\Gamma)}\{\bold 0\}.$
We leave the other inclusion to the reader.
We have proved that ${\sf c}_jg\in G^{**}$.
\item $G^{**}$ is closed under substitutions.

It is enough to consider the ${\sf s}_{ij}$'s, since for all $i,j\in \alpha$
${\sf s}_i^j(x)={\sf c}_i(x\cap {\sf d}_{ij})$ i.e. 
the ${\sf s}_i^j$'s are term definable, $G^{**}$ is 
closed under the ${\sf c}_i$'s and ${\sf d}_{ij}\in G^{**}$.
For this purpose, let 
$$H= Pl^S\cup Pl^<\cup \{{\sf c}_{(\Delta)}{\bold \{\bold 0\}}:\Delta\subseteq_{\omega}\alpha\}.$$ 
Here we are not requiring that $0\in \Delta$,
so that $H$ is bigger than $G$.
However, since $G^{**}$ is closed under cylindrifications we still have
$G\subseteq H\subseteq G^{**}$. 
Now let $i,j\in \alpha$, and $x\in H$. 
Then it not hard to check that ${\sf s}_{ij}x\in G^{**}$. 
Indeed $Pl^S$ is closed under ${\sf s}_{ij}$ and so is $Pl^<$.
Assume that $y={\sf c}_{(\Delta)}\{\bold 0\}$, $\Delta\subseteq_{\omega} \alpha$,  and $i,j<\alpha$ are distinct. If $\{i,j\}\subseteq \Delta$ or $\{i,j\}\cap \Delta=\emptyset$, then
${\sf s}_{ij}{\sf c}_{(\Delta)}\{\bold 0\}={\sf c}_{(\Delta)}\{\bold 0\}$. 
Now suppose that $i\in \Delta$, $j\notin \Delta$ and that ${\sf c}_{(\Delta}\{\bold 0\}=V_1\times V_2\times V_l\ldots $ where $V_l\in \{\F, \{0\}\}$
and $V_l=\F$ only for finitely many $l$'s.
Assume further that $i<j$,  $V_i=\F$ and $V_j=\{0\}$ so that 
${\sf c}_{(\Delta}\{\bold 0\}=V_1\times\ldots  \F\times \ldots \{0\}\times \ldots.$
Then ${\sf s}_{ij}{\sf c}_{(\Delta)}\{\bold 0\}$ is obtained from ${\sf c}_{(\Delta)}\{\bold 0\}$ by interchanging
the $ith$ and $jth$ co-ordinates, that is it is equal to  $V_1\times\ldots  \{0\}\times \ldots \F\times \ldots.$, which is of the form ${\sf c}_{(\Gamma)}\{\bold 0\}$
for some $\Gamma\subseteq_{\omega}\alpha$. In fact $\Gamma=(\Delta\sim \{i\})\cup \{j\}.$
By noting that the ${\sf s}_{ij}$'s are boolean endomorphisms
we get that $G^{**}$, generated as a boolean algebra by $H$ 
(since $G\subseteq H$ is a set of generators),  
is closed under ${\sf s}_{ij}$. 
\end{enumarab}
It thus follows that $w\notin A$ and we are done.
\end{demo}

\section{The class of weakly neat atom structures is not elementary}

From the title, it is clear that there exists atom structures such that are very weakly neat. 

\begin{theorem} The class of weakly neat atom structures is not elementary, for finite and infinite dimensions.
\end{theorem}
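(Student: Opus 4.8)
The plan is to show that the class of weakly neat atom structures, call it $\mathbf{WN}$, is not closed under elementary equivalence; since any first-order axiomatizable class is closed under $\equiv$, this is enough. Concretely, for each dimension I will produce two atom structures $\gamma$ and $\beta$, in the signature of $\CA_n$ atom structures, with $\gamma\equiv\beta$, such that $\beta$ is weakly neat while $\gamma$ is \emph{not} weakly neat (indeed very weakly neat). The two ingredients are the rainbow algebra of the previous section and the two game theorems relating neat embeddings to $F^{n+2}$ and $H_n$.

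For finite $n\geq 3$, let $\c A=\c C_n$ be the rainbow algebra and put $\gamma=\At\c A$. The decisive point is that $F^{n+2}(\gamma)$ is played \emph{on the atom structure} $\gamma$, so a winning strategy for either player depends only on $\gamma$ and not on any particular algebra whose atom structure is $\gamma$. We have already shown that \pa\ has a \ws\ in $F^{n+2}(\gamma)$. Now let $\D$ be \emph{any} algebra with $\At\D=\gamma$. If $\D\in{\bf S}_c\Nr_n\CA_{n+2}$, then by Theorem~\ref{thm:n} \pe\ would have a \ws\ in $F^{n+2}(\At\D)=F^{n+2}(\gamma)$, which is impossible since the two players cannot both win. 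Hence $\D\notin{\bf S}_c\Nr_n\CA_{n+2}$. Because $\Nr_n\CA_\omega\subseteq\Nr_n\CA_{n+2}\subseteq{\bf S}_c\Nr_n\CA_{n+2}$, no algebra based on $\gamma$ lies in $\Nr_n\CA_\omega=\Nr_n\CA_{n+\omega}$; that is, $\gamma$ is very weakly neat, in particular not weakly neat.

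It remains to exhibit an elementarily equivalent partner that \emph{is} weakly neat. As constructed above, there is a countable algebra $\c A'$ with $\c A'\equiv\c A$ for which \pe\ has a \ws\ in the hypernetwork game $H_n(\c A')$; set $\beta=\At\c A'$, noting that $\c A'$ is atomic since atomicity is a first-order property preserved under $\equiv$. By Theorem~\ref{thm:RaC}, from $\beta$ countable and \pe\ winning $H_n(\beta)$ we obtain $\C\in\RCA_\omega$ with $\Nr_n\C$ atomic and $\At\Nr_n\C\cong\beta$, so $\Nr_n\C$ is a neat reduct based on $\beta$ and $\beta$ is weakly neat. Finally, the atom structure is uniformly first-order interpretable (without parameters) in its algebra — the atoms form a definable set, and the relations $E_{ij},T_i$ are definable on them — so $\c A\equiv\c A'$ yields $\gamma=\At\c A\equiv\At\c A'=\beta$. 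Thus $\beta\in\mathbf{WN}$, $\gamma\notin\mathbf{WN}$ and $\beta\equiv\gamma$, proving non-elementariness for every finite $n\geq 3$.

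For infinite $\alpha$ the same scheme applies, with the finite neat-embedding gap replaced by the neat embedding theorem $\RCA_\alpha=S\Nr_\alpha\CA_{\alpha+\omega}$ and the $\alpha$-dimensional analogues of the games; alternatively one transfers the finite example to dimension $\alpha$ by adjoining dummy (inactive) dimensions and checking that this preserves both elementary equivalence and the weakly-neat versus very-weakly-neat dichotomy. The main obstacle throughout is the ``$\gamma$ is not weakly neat'' half, since it requires ruling out \emph{all} algebras sharing the atom structure $\gamma$ simultaneously; what makes this tractable is precisely that $F^{n+2}$ is a game on $\gamma$, so a single \ws\ for \pa\ excludes every algebra based on $\gamma$ from the relevant neat-reduct class at once. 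The second delicate point is the transfer of elementary equivalence from algebras to atom structures, handled by the interpretability remark, and carrying the infinite-dimensional adaptation through in full detail is where the most care is needed.
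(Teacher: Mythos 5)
Your finite-dimensional argument is correct, and it is essentially the paper's own game-theoretic machinery (the apparatus behind Theorem~\ref{r}) redirected at atom structures: \pa\ wins $F^{n+2}(\At\c C_n)$, and since that game is played on the atom structure alone, Theorem~\ref{thm:n} excludes \emph{every} atomic algebra sharing $\At\c C_n$ from ${\bf S}_c\Nr_n\CA_{n+2}\supseteq\Nr_n\CA_{\omega}$ at once; the countable $\c A'\equiv\c A$ on which \pe\ wins the $\omega$-round hypernetwork game yields, via Theorem~\ref{thm:RaC}, a weakly neat partner, and interpretability of atom structures in atomic algebras transfers elementary equivalence. In fact your version supplies the justification that the paper's own two-line treatment of the finite case leaves dangling (the paper only argues that $\At\B$ fails to be \emph{strongly} neat, which is not what non-elementarity of the weakly neat class requires); your observation that a single \ws\ for \pa\ simultaneously disqualifies all algebras over the atom structure is exactly the needed point.

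The genuine gap is the infinite-dimensional half, which the theorem asserts and which the paper explicitly flags as ``harder''. Neither of your proposed fixes works as stated. The games apparatus is intrinsically finite-dimensional: the rainbow atom structure, the games $F^m$ and $H$, and Theorems~\ref{thm:n} and~\ref{thm:RaC} are defined and proved only for $2\leq n<\omega$ (networks are maps ${}^{n}\Delta\to \At\C$, faces have $n-2$ nodes, and the ultrapower/elementary-chain argument runs through the finite-round games), so there is no $\alpha$-dimensional analogue to invoke; and the neat embedding theorem $\RCA_{\alpha}={\bf S}\Nr_{\alpha}\CA_{\alpha+\omega}$ is of no help, because weak neatness concerns membership in the \emph{full} neat reduct class $\Nr_{\alpha}\CA_{\alpha+\omega}$, and it is precisely the gap between $\Nr_{\alpha}$ and ${\bf S}\Nr_{\alpha}$ that has to be exploited. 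Likewise, ``adjoining dummy dimensions'' is not a defined operation on atom structures here, and even granted one, you would still have to show that \emph{no} atomic $\CA_{\alpha}$ based on the inflated structure is a full neat reduct --- nothing in the finite example controls the much larger family of algebras sharing the inflated atom structure. What the paper actually does for infinite $\alpha$ is entirely different: it constructs two atomic weak set algebras $\A(\alpha)$ and $\B(\alpha)$ from relations $C_r$ indexed by a set $R$ with $|R|>|\alpha|$, proves $\A(\alpha)\equiv\B(\alpha)$ by Tarski--Vaught, shows $\At\B(\alpha)$ is very weakly neat by a cardinality argument (any atomic algebra based on it contains the term algebra, whose relativization to $L=1_{Id}$ has at most $|\alpha|$ elements, whereas if it were the neat reduct of a $\CA_{\alpha+1}$ the substitution ${}_{\alpha}s(0,1)$ available from the extra dimension would force that relativization to have $|R|$ elements), and shows $\At\A(\alpha)$ is weakly neat only after imposing the saturation conditions of Lemma~$1^{**}$ and carrying out the verification in Facts~6.6--6.10 that $\A(\alpha)\in\Nr_{\alpha}Ws_{\alpha+\beta}$ for every $\beta$. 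Some construction of this kind is indispensable for the infinite-dimensional claim; it cannot be obtained by waving from the finite case.
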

\begin{proof} For the first part there is an atomic $\A\in \Nr_n\CA_{\omega}$ 
and an atomic $\B$ that is not a neat reduct, elementary equivalent to $\A$. $\At\A$ is srongly neat, but $\At\B$ is not.
But $\At\A\equiv \At\B$, 
because atom structure are first order interpretable in
the algebras, and we are done.
\end{proof}

The second part is harder. The construction is similar, but not identical to the one used in \cite{IGPL}. We prove our result for the harder infinite dimensional case.
The finite dimensional case can be easily destilled from the finite version of our algebras constructed in 
\cite{IGPL}.
Throughout this section $\alpha$ is a fixed arbitrary ordinal with 
$\alpha\geq \omega$ and $R$ is an uncountable set with $|R|>|\alpha|$.

\begin{athm}{Notation}
Let $s$ be an $\alpha$-ary sequence, and $i<\alpha$. Then $s(i|w)$ denotes the  
$\alpha$-ary sequence for which:
$s(i|w)(i)=w$ and $s(i|w)(x)=s(x)$ for $x\neq i$.
For a set $U$, $w\in W$ and $\beta$ an ordinal, 
we let $\bold w$ denote the $\beta$-ary sequence $\beta\times \{w\}$, i.e the constant 
sequence  for which $\bold w_i=w$, for all $i\in \beta$.
The arity of $\bold w$, we hope, will be clear from context.
\end{athm}
\begin{lemma}
There are a set $U$, $w\in U$, and $\alpha$-ary relations 
$C_r\subseteq {}^{\alpha}U^{(\bold w)}$ 
for all $r\in R$, such that conditions (i) and (ii) below hold:
\begin{enumroman}
\item For all $r\in R$, for all $i<\alpha$, for all $s\in {}^{\alpha}U^{(\bold w)}$, 
there exists $y\in U-Rgs$ such that $s(i|y)\in C_r$. 
\item The $C_r$'s ($r\in R$) are pairwise disjoint.
\end{enumroman}
\end{lemma}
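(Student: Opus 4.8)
The plan is to realize all the $C_r$ simultaneously as the colour classes of a single colouring of the weak space ${}^{\alpha}U^{(\mathbf{w})}$, so that disjointness (ii) becomes automatic and only the richness condition (i) requires work. First I would equip $U$ with the structure of an abelian group whose identity is the distinguished point $w$ (so $w=0$), chosen so that there is a surjective homomorphism $\phi\colon U\to B$ onto an abelian group $B$ with $|B|=|R|$ and with $\ker\phi$ infinite. Concretely one may take $U=B\oplus\mathbb{Z}$ with $\phi$ the first projection, where $B$ is any abelian group of cardinality $|R|$ (for instance the free abelian group on $R$, which has cardinality $|R|$ since $R$ is infinite); then $\ker\phi\cong\mathbb{Z}$ is infinite and $|U|=|R|$. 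Finally fix a bijection $\psi\colon B\to R$.

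For a weak sequence $s\in{}^{\alpha}U^{(\mathbf{w})}$ the set $\{i<\alpha:s_i\neq w\}$ is finite, so the sum $\sigma(s)=\sum_{i<\alpha}s_i$ is a well-defined element of $U$ (only finitely many summands differ from $0$, even when $\alpha$ is infinite). I would then define the colouring $c(s)=\psi(\phi(\sigma(s)))\in R$ and set $C_r=\{s\in{}^{\alpha}U^{(\mathbf{w})}:c(s)=r\}$ for each $r\in R$. Since $s(i|y)$ again has finite support, it lies in ${}^{\alpha}U^{(\mathbf{w})}$, and since every $s$ receives exactly one colour, condition (ii) holds at once.

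For condition (i) I would fix $r\in R$, $i<\alpha$ and $s\in{}^{\alpha}U^{(\mathbf{w})}$, and compute: replacing the $i$-th coordinate of $s$ by $y$ affects only the $i$-th summand, so $\sigma(s(i|y))=\sigma(s)-s_i+y$, and hence $c(s(i|y))=r$ if and only if $\phi(y)=\psi^{-1}(r)-\phi(\sigma(s))+\phi(s_i)$. The right-hand side is a fixed element $b'\in B$, and since $\phi$ is surjective $\phi^{-1}(b')$ is a coset of $\ker\phi$, hence infinite. Because $s$ is a weak sequence its range $\mathrm{Rg}(s)$ is finite (it equals $\{0\}$ together with the finitely many nonzero values $s_i$), so I may choose $y\in\phi^{-1}(b')\setminus\mathrm{Rg}(s)$; this $y$ witnesses (i).

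The step carrying the real content is the last one: reconciling the "fresh value" demand $y\in U-\mathrm{Rg}(s)$ with forcing a prescribed colour. This is precisely why the construction needs the fibres of $\phi$ (the cosets of $\ker\phi$) to be infinite while $\mathrm{Rg}(s)$ stays finite, and why the colour is taken to be a symmetric aggregate of all coordinates — the group sum — rather than, say, the value at a single distinguished coordinate. An aggregate that any individual coordinate can shift to an arbitrary target is exactly what guarantees (i) uniformly in $i$, and I expect the only care needed is in handling the infinite-$\alpha$ bookkeeping of supports, which the finiteness of $\mathrm{Rg}(s)$ renders routine.
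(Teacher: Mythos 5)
Your proof is correct. Disjointness is automatic because the $C_r$'s are the fibres of the single colouring $c$, and for clause (i) the identity $\sigma(s(i|y))=\sigma(s)-s_i+y$ reduces the task to choosing $y$ in one prescribed coset of $\ker\phi\cong\mathbb{Z}$; that coset is infinite, while $\mathrm{Rg}(s)$ is finite since $s$ has finite support (and $s(i|y)$ stays in the weak space, so $c$ is defined on it), so a fresh witness exists. This is, however, a genuinely different route from the paper's. The paper obtains such structures by transfinite recursion — the method is carried out in detail for the stronger infinite version proved later in the section, with the blocks, symmetry, bigness, saturation and disjointness clauses: one enumerates all demands (there, a tuple of elements together with a function prescribing colours; here it would be a weak sequence, a coordinate and a target colour) in a list in which every demand recurs cofinally, and at each stage adjoins a brand-new point declared to realize the pending demand, the resulting tuples being placed into exactly one $C_r$ (the one indexed by the $\prec$-least admissible element). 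Disjointness holds because each tuple is coloured at most once, and richness holds because fresh points are always available. Your one-shot algebraic colouring buys brevity and uniformity: the verification is a two-line computation, disjointness and invariance under permutations of coordinates come for free, and no scheduling or bookkeeping is needed. What the recursion buys is flexibility: the same scheduling machinery upgrades to the saturation (quantifier-elimination-type) condition of the later lemma, which is what the section ultimately needs in order to force $A(\alpha)$ to be a full neat reduct; a global group-sum colouring like yours meets the symmetry and bigness clauses of that stronger lemma but has no evident adaptation meeting saturation, so for the section as a whole the paper's method is the one that scales.
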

\begin{athm}{Notation} Let $\bold U=\langle U, C_r\rangle_{r\in R}$, with 
$C_r\subseteq {}^{\alpha}U^{(\bold w)}$, 
be the structure described in Lemma 1*. 
Let $W=\cup_{i\in \alpha} U\times \{i\}$.
Then $W$ is simply $|\alpha|$ disjoint copies of $U$.
Let $s=\langle (w,0),\cdots,(w,j)\cdots\rangle_{j<\alpha}$. Then $s\in {}^{\alpha}W$
with  $s(i)\in W_i=U\times \{i\}$. 
(Recall that $\bold w=\langle w:i<\alpha \rangle$).
Let $V$ be the weak space ${}^{\alpha}W^{(s)}$ and let 
$C(\alpha) =\langle Sb(V),\cup,\cap,\-c_i,d_{ij}\rangle$ be the full $Ws_{\alpha}$ with unit $V$.
$S_{\alpha}$ denotes the set of one to one functions (equivalently permutations) 
which are almost everywhere identity, i.e, which are in $^{\alpha}\alpha^{(Id)}$.
\end{athm}

Let $u=\langle u_0,u_1,\cdots u_j\cdots \rangle_{j\in \alpha}\in S_{\alpha}$, and $r\in R$. Then 
$p(u,r)$ denotes the following $\alpha$-ary relation on $V$:
$$\{\langle (a_0,u_0),\cdots ,(a_j,u_j)\cdots \rangle_{j<\alpha} \in 
V:\quad\langle a_0,\cdots a_j,\cdots \rangle_{j<\alpha} \in C_r\}.$$
$$P(u)=\{p(u,r): r\in R\}.$$
Note that $P(u)$ consists of $|R|$ many $\alpha$ relations
on $V$. Let $N$ be a fixed countably infinite subset of  $R$.
Then $P_{\omega}(u)=\{p(u,r):r\in N\}$.
Again $P_{\omega}(u)$ countably infinite.
Now we define $\A(\alpha)\in Ws_{\alpha}$ and $\B(\alpha)\subseteq \A(\alpha)$ as follows:
$$\A(\alpha)=\Sg^{C(\alpha)}(\cup \{P(u):u\in S_{\alpha}\})$$
$$\B(\alpha)=\Sg^{A(\alpha)}((\cup \{P(u): u\in S_{\alpha}-\{Id\})\cup P_{\omega}(Id)).$$

\begin{theorem} $\A(\alpha)$ and $\B(\alpha)$ are atomic algebras with elementay equivalent atom structures.
$\At\B$ is not weakly neat, but $\At\A$ is.
\end{theorem}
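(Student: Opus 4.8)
The plan is to establish the four assertions—atomicity of $\A(\alpha)$ and $\B(\alpha)$, elementary equivalence of their atom structures, weak neatness of $\At\A(\alpha)$, and the failure of weak neatness for $\At\B(\alpha)$—in that order, the last being the heart of the matter. For atomicity, note that both algebras are subalgebras of the full weak set algebra $C(\alpha)$, generated by the relations $p(u,r)$ together with their closure under the cylindric operations (in particular their images under the finite substitutions). The $p(u,r)$ are pairwise disjoint: a distinct second-coordinate profile ($u\neq u'$) forces the sequences apart, while distinct colours are separated by clause (ii) of Lemma~1$^*$. I would then show every nonzero generated element lies above a minimal element of the designated form, using clause (i) of Lemma~1$^*$: since every line in direction $i$ meets $C_r$ at a fresh point, the cylindrifications ${\sf c}_i p(u,r)$ are ``large'' and cannot split the basic pieces, so the closure under the operations produces atoms without cutting a positive element. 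The computation is identical for $\B(\alpha)$, the only change being which colours occur over $Id$; this yields parts (1)--(2).

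For $\At\A(\alpha)\equiv\At\B(\alpha)$ I would run a back-and-forth / \ef\ argument on the relational atom structures, whose signature is the diagonal relations $E_{ij}$ and the cylindrifier relations $T_i$. The two structures are built identically except that the fibre over $Id$ carries all of $R$ in $\At\A(\alpha)$ but only the countable set $N$ in $\At\B(\alpha)$. Since the relational type of an atom $p(u,r)$ depends only on $u$ and the finitely many coordinates where $u$ moves—not on the particular colour $r$—and since both $R$ and $N$ are infinite, Duplicator can answer any finite play: any finite configuration realized in one structure is realized in the other. Hence the two atom structures satisfy the same first-order sentences, the cardinality of a fibre being invisible to first-order logic.

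To see that $\At\A(\alpha)$ is weakly neat it suffices to exhibit one algebra based on it lying in $\Nr_{\alpha}\CA_{\alpha+\omega}$, and the natural candidate is $\A(\alpha)$ itself. I would dilate $V={}^{\alpha}W^{(s)}$ to the weak space ${}^{\alpha+\omega}W^{(s^+)}$, form the full $Ws_{\alpha+\omega}$ on it, and let $\E$ be the subalgebra generated by the natural liftings of the $p(u,r)$ (which constrain only the first $\alpha$ coordinates). Because every generator is genuinely $\alpha$-dimensional, one checks that the neat reduct $\Nr_{\alpha}\E$ is exactly $\A(\alpha)$—that no new $\alpha$-dimensional elements appear—and here the symmetric presence of all of $R$ in every fibre is what makes the reduct close up correctly. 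This realizes $\A(\alpha)\in\Nr_{\alpha}\CA_{\alpha+\omega}$.

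The hard part, and the main obstacle, is showing $\At\B(\alpha)$ is \emph{not} weakly neat, i.e. that no algebra based on it is a neat reduct. I would argue by contradiction: suppose $\D$ is based on $\At\B(\alpha)$ with $\D=\Nr_{\alpha}\E$ for some $\E\in\CA_{\alpha+\omega}$. The crux is that the extra $\omega$ dimensions of $\E$ furnish, for each $u\in S_{\alpha}$, a term $\tau_u$ built from cylindrifications and substitutions on coordinates in $[\alpha,\alpha+\omega)$ which—unlike the honest $\alpha$-dimensional permutation substitution ${\sf s}_u$, which also scrambles the $U$-values—permutes only the copy-index second coordinate. One shows $\tau_u$ maps $\D$ into $\D$, is a complemented (hence atom-bijecting) operation on $\D$, and carries the atom $p(Id,r)$ to $p(u,r)$; it would therefore restrict to a bijection between the $Id$-fibre and the $u$-fibre of $\At\B(\alpha)$. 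But for $u\neq Id$ these have cardinalities $|N|=\aleph_0$ and $|R|>\aleph_0$, a contradiction. The delicate steps, which I expect to absorb most of the work, are constructing $\tau_u$ inside the dilation and proving it is a Boolean automorphism of the neat reduct; this is precisely the neat-reduct phenomenon that fails for a bare $\CA_{\alpha}$, and it explains why $\B(\alpha)$, though elementarily equivalent to $\A(\alpha)$ at the atom-structure level, cannot sit inside any $\CA_{\alpha+\omega}$ as a neat reduct.
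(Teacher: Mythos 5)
Your negative half --- that $\At\B(\alpha)$ is not weakly neat --- is essentially the paper's own argument: assuming some $\D$ based on $\At\B(\alpha)$ is a full neat reduct of some $\E\in\CA_{\alpha+1}$, the transposition substitution ${}_{\alpha}s(0,1)$, which is term-definable in $\E$ using the spare dimension, is a Boolean isomorphism of $\Nr_{\alpha}\E$ carrying the fibre over $v=[0,1]\circ Id$ (which has $|R|$ atoms) into the fibre over $Id$ (which has countably many); the paper phrases this as $t^{\B(\alpha)}(1_v)=1_{Id}$, ${}_{\alpha}s(0,1)c_{\alpha}x\leq t(c_{\alpha}x)$ and $|Rl_L\B(\alpha)|\leq\alpha<|R|$, but it is the same cardinality trick, and note that a single transposition suffices --- you do not need a term $\tau_u$ for every $u\in S_{\alpha}$. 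Your direct \ef\ argument for $\At\A(\alpha)\equiv\At\B(\alpha)$ is a legitimate alternative to the paper's route (the paper proves $\B(\alpha)\preceq\A(\alpha)$ by the Tarski--Vaught test and then uses first-order interpretability of the atom structure in the algebra), and your atomicity argument agrees in substance with the paper's observation that both algebras decompose into fibres whose Boolean parts are finite--cofinite algebras.

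The genuine gap is in the positive half, that $\At\A(\alpha)$ \emph{is} weakly neat. You form the dilation ${}^{\alpha+\omega}W^{(s^{+})}$, let $\E$ be the subalgebra generated by the lifted generators, and assert that ``one checks'' that $\Nr_{\alpha}\E=\A(\alpha)$, i.e.\ that no new $\alpha$-dimensional elements appear. That is precisely the step that can fail, and the paper is explicit about it: after proving that $\B(\alpha)$ is not a neat reduct it states that ``$\A(\alpha)$ constructed so far may or may not be a neat reduct,'' and it then \emph{rebuilds} the base structure so as to satisfy the extra conditions of Lemma 1$^{**}$ --- above all the saturation (quantifier-elimination) condition (iv), secured by a transfinite enumeration construction --- exactly so that cylindrifying in the extra dimensions never creates a new $\alpha$-dimensional set. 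The verification is then the bulk of the section (Facts 6.6--6.10: closure of $G(k)^{**}$ under the operations, $G(0)^{**}=\Nr_{\alpha}G(k)^{**}$, $G(0)^{**}=\A(\alpha)$, and the limit argument for infinitely many extra dimensions). In general one only has $\Sg^{\Nr_{\alpha}\C}X\subseteq\Nr_{\alpha}\Sg^{\C}X$, and the inclusion can be proper: the preceding section of this very paper (the vector-space construction) manufactures a new $\alpha$-dimensional element $w=\sup Y$ in the dilation, which is the counterexample to your general principle that ``$\alpha$-dimensional generators'' force the reduct to close up. Symmetry of $R$ across fibres does not repair this. Moreover, even granting only $\A(\alpha)\subseteq\Nr_{\alpha}\E$, that would show $\A(\alpha)\in {\bf S}\Nr_{\alpha}\CA_{\alpha+\omega}$, not weak neatness of $\At\A(\alpha)$; to use $\Nr_{\alpha}\E$ itself as the witnessing algebra you would still have to prove it is atomic with atom structure isomorphic to $\At\A(\alpha)$ (no atoms split, no atomless elements appear), which requires the same saturation control you have omitted.
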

\begin{proof}The thing to notice here, is that $\A(\alpha)$ is a disjoint union of algebras whose boolean part are isomorphic to the finite-cofinite
Boolean algebra on an uncountable set $R$. We have $^{\alpha}{\alpha}^{(Id)}$ many copies. Hence it is atomic.
Also $\B$ is atomic. We will show that that $\B\equiv \A$ , hence $\At\A\equiv \At\B$, and that any algebra having an atom 
structure isomorphic to $\At\B$ is not a 
neat reduct.
\end{proof}

We shall start by showing that $\A(\alpha)$ and $\B(\alpha)$ 
are elementary equivalent, so that their atom structures will also be elementary equivalent,
and that $\B(\alpha)$ is not a neat reduct; not only that, but any algebra based on its atom structure is not a neat reduct.

Then we shall impose further restrictions on 
the structure $\langle U, C_r\rangle_{r\in R}$, that will $force$ 
$A(\alpha)$ to be a neat reduct ( and not just a subneat reduct). 

\begin{athm}{Notation}
Let $u=\langle u_0,u_1,\cdots u_j\cdots \rangle_{j\in \alpha}\in {}^{\alpha}\alpha^{(Id)}$. 
Then $1_u$ denote the  $\alpha$-ary relation on $U$:
$$\{\langle (a_0,u_0),(a_1,u_1),\cdots (a_j,u_j)\cdots \rangle_{j<\alpha}\in V: \langle a_0,\cdots a_j\cdots 
\rangle_{j\in \alpha}\in {}^{\alpha}U^{(\bold w)}\}.$$
$L$ stands for $1_{Id}$.
\end{athm}  
\begin{theorem}
\begin{enumroman}
\item For all $u\in S_{\alpha}$, $1_u\in A(\alpha)$.
\item Let $Rl_LA(\alpha)=\{x\in A(\alpha):x\leq L\}$. Then $Rl_LA(\alpha)$ 
is atomic and its set of atoms is equal to $P(Id)$.
Furthermore, $Rl_LA(\alpha)$ is generated as a $BA$ by $P(Id)$.
\item For all non zero $a\in Rl_LA(\alpha)$, for all $i<\alpha$, $c_ia=c_iL$.
\item For all $a\in A(\alpha)$, for all $i<\alpha$, $c_ia\cap L\in \{0,L\}$.
\item Let $P=\cup \{P(u): u\in S_{\alpha}\}$. Let $p$ be a permutation of $P(Id)$. Then 
there exists an automorphism $f$ of $A(\alpha)$,
such that $p\subseteq f$ and $f(x)=x$, for each $x\in P-P(Id)$.
\end{enumroman}
\end{theorem}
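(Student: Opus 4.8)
The plan is to exploit the block decomposition of $A(\alpha)$ into the pairwise disjoint pieces $1_u$ ($u\in S_\alpha$), the only block meeting $L=1_{Id}$ being the one indexed by $Id$. The engine for everything is a single computation resting on condition (i) of Lemma $1^*$. For (i) I would fix any two distinct $i,j<\alpha$ (available since $\alpha\geq\omega$) and any $r\in R$ and show $1_u=c_ip(u,r)\cap c_jp(u,r)$. For ``$\supseteq$'', if $t$ lies in both cylinders then the $c_i$-factor forces $t_m\in U\times\{u_m\}$ for all $m\neq i$ and the $c_j$-factor does the same for all $m\neq j$, so every coordinate of $t$ sits in its correct copy and $t\in 1_u$. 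For ``$\subseteq$'', given $t\in 1_u$ with underlying $U$-sequence $\langle a_m\rangle$, condition (i) of Lemma $1^*$ supplies, for each of $i$ and $j$ separately, a value realizing membership of the modified sequence in $C_r$, witnessing $t\in c_ip(u,r)$ and $t\in c_jp(u,r)$. Since $p(u,r)$ is a generator of $A(\alpha)$, this gives $1_u\in A(\alpha)$.

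For (ii) the goal is to identify $Rl_LA(\alpha)$ with the finite--cofinite Boolean algebra on $P(Id)$. The easy inclusions are that each $p(Id,r)\leq L$ lies in $A(\alpha)$ and that the $p(Id,r)$ are pairwise disjoint (condition (ii) of Lemma $1^*$). The substantive step is the reverse inclusion $Rl_LA(\alpha)\subseteq\langle P(Id)\rangle_{BA}$: I would develop a normal form for the elements of $A(\alpha)$ — a finite union of ``block-local'' finite/cofinite combinations of generators together with cylindrified generators and diagonal terms — exactly parallel to the closure computation for $G^{**}$ carried out in the previous section. The key point to extract is that every cylinder and every diagonal term meets $L$ in either $0$ or $L$: a cylinder $c_ip(u,r)$ has its coordinates off $i$ pinned to copies $u_m$, and meeting $L$ forces $u_m=m$ for all $m\neq i$, hence $u=Id$, in which case condition (i) of Lemma $1^*$ makes the cylinder contain all of $L$. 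Consequently, restricting a normal form to $L$ kills every cylinder and diagonal term and leaves a finite Boolean combination of the $p(Id,r)$. Atomicity with atom set exactly $P(Id)$, and generation by $P(Id)$, then follow from the standard structure of a finite--cofinite algebra on pairwise disjoint elements; minimality of each $p(Id,r)$ is immediate once every element below it is known to be a Boolean combination of the disjoint $p(Id,r')$.

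Part (iii) is then a quick corollary. Given $0\neq a\leq L$ in $A(\alpha)$, atomicity from (ii) yields $p(Id,r_0)\leq a$ for some $r_0$, and the same application of condition (i) of Lemma $1^*$ used in (i) shows $c_ip(Id,r_0)=c_iL$ (both equal the cylinder with coordinates off $i$ in their correct copies and coordinate $i$ free). Hence $c_iL\geq c_ia\geq c_ip(Id,r_0)=c_iL$, forcing $c_ia=c_iL$. For (iv) I would again invoke the normal form: writing $a=\bigcup_k\beta_k$ and using additivity of $c_i$, it suffices to see $c_i\beta_k\cap L\in\{0,L\}$ for each basic piece, which is precisely the all-or-nothing behaviour on $L$ isolated in the proof of (ii) (a block-$u$ piece with $u\neq Id$ contributes $0$, a nonzero block-$Id$ piece contributes $L$ via (iii), and cylinder terms behave as computed for generators). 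A finite union of sets in $\{0,L\}$ stays in $\{0,L\}$.

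Finally, for (v) I would let $\pi$ be the permutation of $R$ induced by $p$ and define $f$ to be the Boolean automorphism of the atomic algebra $A(\alpha)$ that permutes the atoms $p(Id,r)\mapsto p(Id,\pi(r))$ and fixes every other atom; in particular it fixes each $p(u,r)$ with $u\neq Id$, giving $f(x)=x$ on $P-P(Id)$, and extends $p$. The work is to check $f$ is a cylindric automorphism, and this is where (iii) and (iv) do the job: by (iii), $c_ip(Id,r)=c_iL$ is independent of $r$ and, dominating all the atoms that $f$ moves, is fixed by $f$, so $c_if(p(Id,r))=c_ip(Id,\pi(r))=c_iL=f(c_ip(Id,r))$; for a fixed atom $p(u,r)$ with $u\neq Id$ the cylinder $c_ip(u,r)$ dominates no $p(Id,r')$, hence is fixed by $f$; and each $d_{ij}$ is disjoint from every $p(Id,r)$ (its coordinates $i,j$ lie in distinct copies), so $f(d_{ij})=d_{ij}$. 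Thus $f$ commutes with all $c_i$ and fixes all diagonals, and is the desired automorphism. I expect the main obstacle to be the normal-form/closure computation underpinning (ii) and (iv) — verifying that cylindrifications and diagonals of arbitrary generated elements meet $L$ only trivially — this being the cylindric analogue of the lengthy $G^{**}$ argument of the previous section, while parts (i), (iii), (v) are comparatively short once it is in place.
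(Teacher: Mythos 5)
Your route is essentially the paper's own: the same cylinder identity for (i) (the paper takes $c_0p(u,r_1)\cap c_1p(u,r_2)=1_u$; your version with a single $r$ and two indices $i,j$ is the same computation from condition (i) of Lemma $1^*$), the same two facts behind (ii) (disjointness of the $C_r$'s, plus the all-or-nothing behaviour $c_ip(Id,r)\cap L=L$ and $p(Id,r)\cap d_{ij}=0$, which the paper states without carrying out the closure computation you propose), the same sandwich $c_iL=c_ip(Id,r_0)\leq c_ia\leq c_iL$ for (iii), and the same automorphism for (v), which the paper writes as $f(x)=p'(x\cap L)\cup(x\cap -L)$ with $p'$ the Boolean automorphism of $Rl_LA(\alpha)$ extending $p$. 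One economy you miss: (iv) follows from (iii) with no second pass through normal forms --- if $b=c_ia\cap L\neq 0$ then $b$ is a nonzero element of $Rl_LA(\alpha)$, so by (iii) $L\leq c_iL=c_ib\leq c_ic_ia=c_ia$, whence $c_ia\cap L=L$.

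The one genuine hole is in your verification for (v). First, $A(\alpha)$ is atomic but not complete, so ``the Boolean automorphism of $A(\alpha)$ that permutes the atoms $p(Id,r)\mapsto p(Id,\pi(r))$ and fixes every other atom'' is not automatically a well-defined map of $A(\alpha)$ onto itself; you need the finite--cofinite structure of $Rl_LA(\alpha)$ from (ii) to see the induced map preserves the algebra, i.e.\ you should define $f(x)=p'(x\cap L)\cup(x\cap -L)$ as the paper does. Second, and more seriously, checking $f(c_ix)=c_if(x)$ on atoms is not enough: a general $x$ is not a finite join of atoms, and commutation with $c_i$ does not propagate from atoms (or from a generating set) to all elements in an incomplete algebra. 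The repair is short and uses exactly the tools you name: by (iv), $c_ix\cap L\in\{0,L\}$, and $p'$ fixes both $0$ and $L$, so $f(c_ix)=(c_ix\cap L)\cup(c_ix\cap -L)=c_ix$ for every $x\in A(\alpha)$; on the other side, (iii) gives $c_ip'(x\cap L)=c_i(x\cap L)$ (both are $0$, or both equal $c_iL$, since $p'$ preserves being nonzero), so $c_if(x)=c_i(x\cap L)\cup c_i(x\cap -L)=c_ix$ as well, and the two sides agree. With that, together with $d_{ij}\cap L=0$ giving $f(d_{ij})=d_{ij}$, your argument goes through.
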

\begin{demo}{Proof}
\begin{enumroman}
\item Let $u\in S_{\alpha}$. Let $r_1,r_2\in R$. Then as easily checked  
$c_0(p(u,r_1))\cap c_1(p(u,r_2))=1_{u}$.
\item Since the $C_r$'s are pairwise disjoint, we have $p(Id,r_1)\cap p(Id,r_2)=0$ for distinct $r_1$ and
$r_2$. Thus the set of atoms of $Rl_LA(\alpha)$ is precisely $P(Id)$. 
By $c_i(p(Id,r))\cap 1_{Id}=1_{Id}$ and $p(Id,r)\cap d_{ij}=0$, for distinct $i,j<\alpha$, it follows that $Rl_LA(\alpha)$
is indeed generated as a $BA$ by the set $P(Id)$. 
\item Let $a\in Rl_LA(\alpha)$ be non zero. By (ii) there exists an atom $p(Id,r)$ below $a$. 
Let $i<\alpha$. 
Then $c_i(p(Id,r))=$(by Lemma 1 (i)) $ \cup\{1_u:u\in {}^{\alpha}\alpha^{(Id)}$ and $u(j)=j$ for all 
$j\neq i\}= c_iL$. From $p(Id,r)\leq a$, (iii) readily follows. 
\item follows from (iii).
\item We first, like we did in the finite case, extend $p$ to a $BA$ automorphism $p'$, say,  
of $Rl_LA(\alpha)$. 
This is possible by (i). Then we define $f(x)=p'(x\cap L)\cup (x\cap -L)$.
The rest is completely analogous to the proof of fact 4.2.
\end{enumroman}
\end{demo}
\begin{theorem} $\B(\alpha)$ is an elementary subalgebra of $\A(\alpha)$.
\end{theorem}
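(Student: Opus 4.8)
The plan is to verify the Tarski--Vaught test for the inclusion $\B(\alpha)\subseteq\A(\alpha)$: given a first-order formula $\phi(x,\bar y)$ and parameters $\bar b$ from $\B(\alpha)$, I must produce, whenever $\A(\alpha)\models\exists x\,\phi(x,\bar b)$, a witness already lying in $\B(\alpha)$. The whole argument is organized around the relativization to $L=1_{Id}$, and the witness will be manufactured by applying an automorphism of the kind supplied by fact (v) of the preceding theorem --- one that acts as a boolean automorphism on $Rl_L\A(\alpha)$ and as the identity off $L$. The two ingredients are a structural decomposition of $\A(\alpha)$ relative to $L$, and a combinatorial choice of permutation of $R$.

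The first and central step is to show that $\A(\alpha)$ splits, relative to the partition $\{L,-L\}$, as $\A(\alpha)=Rl_{-L}\B(\alpha)\oplus Rl_L\A(\alpha)$; that is, $\A(\alpha)$ and $\B(\alpha)$ differ only inside $L$. I would prove this by checking that the set $D=\{x\cup c:x\in Rl_{-L}\B(\alpha),\ c\in Rl_L\A(\alpha)\}$ is a subalgebra of $C(\alpha)$ containing all generators $\bigcup_{u\in S_\alpha}P(u)$ of $\A(\alpha)$ and therefore equals $\A(\alpha)$. Membership of the generators is immediate since $p(Id,r)\leq L$ and $p(u,r)\cap L=0$ for $u\neq Id$; closure under the boolean operations is routine; and closure under $c_i$ is exactly where facts (iii) and (iv) enter, since $c_i c\in\{0,c_iL\}$ for $c\leq L$ by (iii), while $c_iL\cap L=L$ by (iv), and $L,c_iL\in\B(\alpha)$ by fact (i) together with (iii). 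The payoff is twofold: every $a\in\A(\alpha)$ satisfies $a\cap(-L)\in\B(\alpha)$, and $\B(\alpha)=\{a\in\A(\alpha):a\cap L\in Rl_L\B(\alpha)\}$. Here, by fact (ii), $Rl_L\A(\alpha)$ is the finite--cofinite boolean algebra on the atom set $P(Id)=\{p(Id,r):r\in R\}$, and $Rl_L\B(\alpha)$ is its subalgebra generated by $P_\omega(Id)$, so every $L$-trace mentions only finitely many ``distinguished'' indices, those of $\B$-elements lying in the countable set $N$.

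With this in hand the test runs as follows. Let $a\in\A(\alpha)$ witness $\exists x\,\phi(x,\bar b)$, let $F_b\subseteq N$ be the finite set of distinguished indices occurring in the $L$-traces of the parameters $\bar b$, and let $F_a\subseteq R$ be the finite distinguished set of $a\cap L$. Using that $N$ is infinite, I would pick a permutation $\pi$ of $R$ that fixes $F_b$ pointwise and maps $F_a$ into $N$ (a finite partial injection fixing $F_b$ and landing $F_a$ inside $N$ always extends to a permutation of $R$). Feeding $\pi$, regarded as a permutation of $P(Id)$, into fact (v) yields an automorphism $f$ of $\A(\alpha)$ acting as the induced boolean automorphism on $Rl_L\A(\alpha)$ and as the identity off $L$, so that $f(a)=f(a\cap L)\cup(a\cap-L)$. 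Then $f$ fixes each $b_i$, because the $L$-trace of $b_i$ mentions only indices in $F_b$, which $\pi$ fixes; hence $\A(\alpha)\models\phi(f(a),\bar b)$. Moreover $f(a)\in\B(\alpha)$: the term $a\cap-L$ lies in $\B(\alpha)$ by the decomposition, while $f(a\cap L)$ now has all its distinguished indices in $\pi(F_a)\subseteq N$, hence lies in $Rl_L\B(\alpha)$, so $f(a)\cap L\in Rl_L\B(\alpha)$ and the membership characterization applies. Thus $f(a)$ is the required witness in $\B(\alpha)$.

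The main obstacle is the decomposition $\A(\alpha)=Rl_{-L}\B(\alpha)\oplus Rl_L\A(\alpha)$, and specifically the closure of $D$ under cylindrifications: a priori, cylindrifying the uncountably many atoms of $\A(\alpha)$ below $L$ that are absent from $\B(\alpha)$ could create elements outside $L$ not reachable from $\B(\alpha)$'s generators, and the argument that $c_i$ collapses every nonzero element below $L$ to the single element $c_iL$ --- independent of the index $r$ --- is precisely what prevents this, leaning essentially on facts (iii) and (iv). The only other point requiring care is the choice of $\pi$, which succeeds exactly because $F_a$ and $F_b$ are finite while $N$ is infinite; this is where the hypothesis that $N$ is countably \emph{infinite}, rather than finite, is indispensable.
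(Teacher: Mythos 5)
Your proof is correct and is essentially the proof the paper intends: the paper disposes of this theorem with the single line that it ``can be proved using the Tarski--Vaught test,'' and the machinery you use to realize that test --- the $\{L,-L\}$ decomposition of $\A(\alpha)$ and the automorphisms of fact (v), which permute $P(Id)$ and act as the identity off $L$ --- is exactly what the preceding facts (i)--(v) were set up to provide. The one statement you assert without proof, namely that $Rl_L\B(\alpha)$ is the Boolean algebra generated by $P_{\omega}(Id)$ (which is what guarantees $F_b\subseteq N$, so that indices in $F_a\cap F_b$ cause no conflict between fixing $\bar b$ and landing $F_a$ inside $N$), follows by rerunning your subalgebra argument for $D$ on the generators of $\B(\alpha)$, so it is an omitted routine step rather than a gap.
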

\begin{proof} This can be proved using the Tarski-Vaught test.
\end{proof}

Now we show that $B(\alpha)$ is not a neat reduct. 
We proceed basically as we did in the finite case, the trick being 
``a(n) (infinite) cardinality twist" that first order logic cannot witness.

\begin{theorem} Let $\Rl_L\B(\alpha)=\{x\in B(\alpha): x\leq L\}$. 
Then $|\Rl_L\B(\alpha)|\leq \alpha$.
\end{theorem}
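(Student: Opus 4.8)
The plan is to show that $\B(\alpha)$, although generated by $|R|$-many elements, can reach only countably many elements below $L$ — in sharp contrast with $\Rl_L\A(\alpha)$, which by part (ii) of the preceding theorem is the full finite--cofinite algebra on the uncountable atom set $P(Id)$ and hence has cardinality $|R|>\alpha$. The whole difference between $\A$ and $\B$ is thus invisible to first order logic but visible to this cardinality count, which is exactly what makes $\B(\alpha)$ fail to be a neat reduct.

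First I would isolate the target set. Let $M$ be the Boolean subalgebra of $\Rl_L\A(\alpha)$ generated by the countable set $P_\omega(Id)=\{p(Id,r):r\in N\}$; concretely $M$ consists of the finite unions $\bigcup_{r\in F}p(Id,r)$ with $F\subseteq N$ finite, together with their relative complements $L\setminus\bigcup_{r\in F}p(Id,r)$. Since $N$ is countable, $|M|\le\aleph_0\le\alpha$, so the theorem reduces to proving the inclusion $\Rl_L\B(\alpha)\subseteq M$.

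The key step is a closure argument. I would put $S=\{x\in A(\alpha):x\cap L\in M\}$ and show that $S$ is a subuniverse of $\A(\alpha)$ containing every generator of $\B(\alpha)$, whence $B(\alpha)\subseteq S$. For the generators: if $r\in N$ then $p(Id,r)\le 1_{Id}=L$, so $p(Id,r)\cap L=p(Id,r)\in M$; and for $u\in S_{\alpha}\setminus\{Id\}$ the blocks satisfy $1_u\cap 1_{Id}=0$ (they carry distinct second--coordinate profiles), so $p(u,r)\cap L=0\in M$ for every $r\in R$. For closure, $M$ is a Boolean algebra, so $S$ is closed under the Boolean operations (using $(-x)\cap L=L\setminus(x\cap L)$); the constants give $d_{ij}\cap L=0$ for $i\ne j$ (a sequence in $L$ has pairwise distinct entries, its profile being $Id$) and $1\cap L=L$, so $d_{ij},0,1\in S$; and, decisively, closure under each cylindrification $c_i$ is immediate from part (iv) of the preceding theorem, which guarantees $c_ix\cap L\in\{0,L\}\subseteq M$ for every $x\in A(\alpha)$.

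The point worth emphasising — and the only place where the construction really does the work — is that I never have to analyse substitutions or transpositions directly: since every substitution operator is a term in $c_i,d_{ij}$ and the Boolean operations, and $S$ is closed under all of these, $S$ automatically absorbs every $\mathsf{s}$--operation. Thus $B(\alpha)=\Sg^{\A(\alpha)}(\ldots)\subseteq S$, and for any $x\in\B(\alpha)$ with $x\le L$ we obtain $x=x\cap L\in M$. Hence $\Rl_L\B(\alpha)\subseteq M$ and $|\Rl_L\B(\alpha)|\le\aleph_0\le\alpha$. The main obstacle is genuinely the closure under cylindrification — the danger being that some clever term drags the uncountable $r$--structure of the non-identity blocks down into $L$ — but this has been packaged in advance into the flatness property (iv), so once that fact is in hand the argument is short.
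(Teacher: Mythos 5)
Your proof is correct, and it turns on the same two facts as the paper's own argument: block-disjointness (so the generators $p(u,r)$ with $u\neq Id$, and the diagonals $d_{ij}$, $i\neq j$, vanish below $L$, while the $p(Id,r)$, $r\in N$, survive), and part (iv) of the preceding theorem (so every cylindrification relativizes to $0$ or $L$). Where you differ is in the packaging. The paper forms an auxiliary algebra $D(\alpha)$, the Boolean subalgebra of $\A(\alpha)$ generated by $X\cup Y$, where $X$ is the generating set of $\B(\alpha)$ and $Y=\{c_ia:a\in A(\alpha)\}\cup\{d_{ij}:i,j<\alpha\}$; adjoining all cylindrifications as Boolean generators makes $D(\alpha)$ closed under the $c_i$, hence a subuniverse containing $\B(\alpha)$, and then $\Rl_LD(\alpha)$ is Boolean-generated by the relativized generators, of which there are at most $\alpha$. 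You instead pull back the countable Boolean algebra $M$ generated by $P_{\omega}(Id)$ and verify that $S=\{x\in A(\alpha):x\cap L\in M\}$ is a subuniverse containing the generators of $\B(\alpha)$. These are the same induction in different clothing, but your version buys two things: it bypasses the paper's cardinality accounting for $Y$ (the assertion $|Y|=\alpha$ is the least transparent step there, and is not actually needed, since, as you observe, the relativization of any $c_ia$ already lies in $\{0,L\}$), and it yields the sharper conclusion $|\Rl_L\B(\alpha)|\leq\aleph_0$, which gives the stated bound $\leq\alpha$ because $\alpha\geq\omega$ throughout this section.
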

\begin{demo}{Proof}
First of all note that by 6.1 (i) we have $L\in B$. 
Now let $$X=\cup \{P(u):u\in S_{\alpha}-\{Id\}\}\cup P_{\omega}(Id),$$ 
$$Y=\{c_ia:a\in A\}\cup \{d_{ij}: i,j\in \alpha\}\text { , and }$$ 
$$D(\alpha)=Sg^{Bl{A}(\alpha)}(X\cup Y).$$
Then $|Y|=\alpha$. This can be seen by noting that for every $x\in Y$, if $x$ is non zero, 
and $x$ is not a diagonal element, then by Lemma 1*(ii) 
there is an $S$ with $S\subseteq {}^{\alpha}\alpha^{(Id)}$ such that $x=\cup \{1_u: u\in S\}$.
Also $D(\alpha)$ is a subuniverse of $A(\alpha)$ because of the following:
$D(\alpha)$ is closed under the boolean set operations , contains the 
diagonals, and for $a\in D(\alpha)$, 
and $i<\alpha$, we have $a\in A(\alpha)$ and so $c_ia\in D(\alpha)$, 
that is $D(\alpha)$ is closed under cylindrifications.  
Since $X\subseteq D(\alpha)$, $X$ generates $B(\alpha)$ and 
$D(\alpha)\in CA_{\alpha}$, we have $B(\alpha)\subseteq D(\alpha)$.
Let $rl_L(a)=a\cap L$. Then we have $$Rl_LD(\alpha)=\{x\in D(\alpha): x\leq L\}=
 Sg^{BlA(\alpha)}rl_L(X\cup Y).$$
But $|rl_L(X\cup Y)|=\alpha$, because $P_{\omega}(Id)$ is countable
and $|Y|=\alpha$.
Therefore $|rl_L(X\cup Y)|=\alpha$, and so $|Rl_LD(\alpha)|\leq \alpha$.
By $Rl_LB(\alpha)\subseteq Rl_LD(\alpha)$ we are done. 
\end{demo}
\begin{theorem}
Let $p[0,1]$ be the transposition on $\alpha$ 
interchanging $0$ and $1$. 
Let $v=p[0,1]\circ Id=\langle 1023\cdots \rangle$. Then $v \in S_{\alpha}$ and the 
following hold: 
\begin{enumroman}
\item $t^{B(\alpha)}(1_{v})=1_{Id}$
\item For all $\beta>\alpha$, $CA_{\beta}\models {}_{\alpha}s(0,1)c_{\alpha}x\leq t(c_{\alpha}x)$.
\item For all $\beta>\alpha$ and 
$A\in CA_{\beta}$, $_{\alpha}s(0,1)\in Ism(BlNr_{\alpha}A, BlNr_{\alpha}A)$.
\end{enumroman}
\end{theorem}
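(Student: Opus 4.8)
The three items are the technical engine behind the (subsequent) proof that $\B(\alpha)$ is not a neat reduct: they say that the transposition substitution ${}_{\alpha}s(0,1)$, which lives only through the extra dimension $\alpha$, is a Boolean automorphism of any neat $\alpha$-reduct (item iii) which, dominated by the purely $\alpha$-dimensional term $t$, drags the ``large'' copy $1_v$ down onto $L=1_{Id}$ (items i--ii). Granting these, if $\B(\alpha)=\Nr_{\alpha}A$ for some $A\in\CA_{\beta}$ with $\beta>\alpha$, then $1_v\in\Nr_{\alpha}A$ gives $c_{\alpha}1_v=1_v$, so ${}_{\alpha}s(0,1)1_v\le t(c_\alpha 1_v)=t(1_v)=1_{Id}=L$ by (ii) and (i); since ${}_{\alpha}s(0,1)$ is a Boolean automorphism by (iii), it embeds the interval $[0,1_v]$ (which contains the $|R|$ pairwise disjoint atoms $P(v)\subseteq B(\alpha)$) injectively below $L$, contradicting $|Rl_L\B(\alpha)|\le\alpha<|R|$. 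So I first record that $v$ is the transposition $[0,1]$, hence lies in $S_\alpha={}^{\alpha}\alpha^{(Id)}$, so $1_v$ and $p(v,r)$ are defined.

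For (i) I would compute directly in the concrete weak set algebra, using ${\sf s}^i_j X=\{s:s(i\mapsto s_j)\in X\}$ and ${\sf c}_i X=\{s:(\exists u)\,s(i\mapsto u)\in X\}$, where $t$ unwinds to the coordinate-swap approximant $t(x)={\sf s}^0_1{\sf c}_1 x\cdot{\sf s}^1_0{\sf c}_0 x$. Here ${\sf s}^0_1{\sf c}_1 1_v=\{s:(\exists u)\,(s_1,u,s_2,\dots)\in 1_v\}$ pins the $0$th coordinate to lie in copy $1$ and leaves the $1$st free, while ${\sf s}^1_0{\sf c}_0 1_v$ pins the $1$st coordinate to copy $0$ and leaves the $0$th free; intersecting, all coordinates are forced into their $Id$-copies and every such sequence is attained, giving exactly $1_{Id}$. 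This is a short unwinding of the definition of $1_u$ together with the disjointness of the copies $W_i=U\times\{i\}$.

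For (ii) the statement is an abstract inequality valid in every $\CA_{\beta}$, so the semantic argument above must be replaced by an equational one. I would set $y=c_{\alpha}x$ (so $c_{\alpha}y=y$), expand ${}_{\alpha}s(0,1)={\sf s}^{\alpha}_1{\sf s}^1_0{\sf s}^0_{\alpha}$, and prove the two bounds ${}_{\alpha}s(0,1)y\le{\sf s}^0_1{\sf c}_1 y$ and ${}_{\alpha}s(0,1)y\le{\sf s}^1_0{\sf c}_0 y$ separately, their meet being $t(y)$. Each bound should follow from the standard identities governing the interaction of substitutions and cylindrifiers (${\sf s}^i_j{\sf c}_i={\sf c}_i$, $c_\alpha y=y$, and the reductions of ${\sf s}^\alpha_j$, ${\sf s}^i_\alpha$ against $c_\alpha$). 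This bookkeeping with the spare index $\alpha$, carried out without any appeal to representability, is the only delicate step, and is where I expect the real work to lie.

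Finally, for (iii) I would argue that ${}_{\alpha}s(0,1)$ is a composition of the Boolean endomorphisms ${\sf s}^i_j$, hence itself a Boolean endomorphism, and then show it restricts to $BlNr_{\alpha}A$: for $y$ with $c_i y=y$ for all $i\ge\alpha$ one checks $c_i\,{}_{\alpha}s(0,1)y={}_{\alpha}s(0,1)y$ (the transposition commutes with $c_i$, $i\ge\alpha$, on $\alpha$-dimensional elements), so ${}_{\alpha}s(0,1)$ maps $\Nr_{\alpha}A$ into itself. It is an involution there — applied twice it is the transposition $(0,1)$ squared, i.e. the identity, on $\alpha$-dimensional elements — and an involutive Boolean endomorphism is a Boolean automorphism, giving ${}_{\alpha}s(0,1)\in Ism(BlNr_{\alpha}A,BlNr_{\alpha}A)$. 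Items (i) and (iii) are essentially bookkeeping with the definitions and the standard substitution calculus; the main obstacle is the purely equational verification of the inequality in (ii).
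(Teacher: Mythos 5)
Your proposal cannot be checked against an in-paper argument, because the paper gives none: the theorem is stated bare, with the surrounding text explicitly instructing the reader to ``copy'' the corresponding facts from the finite-dimensional construction of \cite{IGPL}. Measured against that intended argument, your reconstruction is correct and is essentially the standard route: guessing $t(x)={\sf s}^0_1{\sf c}_1x\cdot{\sf s}^1_0{\sf c}_0x$, computing (i) concretely in the weak set algebra, proving (ii) as an abstract inequality from the expansion ${}_{\alpha}s(0,1)={\sf s}^{\alpha}_1{\sf s}^1_0{\sf s}^0_{\alpha}$, and getting (iii) from ``involutive Boolean endomorphism $\Rightarrow$ automorphism''; your opening paragraph also correctly identifies how the three items combine with $|Rl_L\B(\alpha)|\leq\alpha<|R|$ to yield the next theorem. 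Two small remarks. First, in (i) your prose glosses are swapped: the set $\{q:\exists u\,(q_1,u,q_2,\dots)\in 1_v\}$ constrains $q_1$ to lie in $W_1$ and leaves $q_0$ free (dually for the other factor); since the displayed formulas are right, the intersection is still exactly $1_{Id}$ and nothing breaks. Second, the bookkeeping you defer in (ii) is genuinely routine and does close: from ${\sf s}^1_0z\leq{\sf c}_1z$, the commutation ${\sf c}_1{\sf s}^0_{\alpha}={\sf s}^0_{\alpha}{\sf c}_1$, the collapse ${\sf s}^{\alpha}_1{\sf s}^0_{\alpha}w={\sf s}^0_1w$ for ${\sf c}_{\alpha}w=w$, and ${\sf s}^{\alpha}_1w=w$ for ${\sf c}_{\alpha}w=w$, one gets ${}_{\alpha}s(0,1){\sf c}_{\alpha}x\leq{\sf s}^0_1{\sf c}_1{\sf c}_{\alpha}x$ and symmetrically ${}_{\alpha}s(0,1){\sf c}_{\alpha}x\leq{\sf s}^1_0{\sf c}_0{\sf c}_{\alpha}x$, all by the substitution identities of \cite{HMT1}, section 1.5. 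So there is no genuine gap; your sketch supplies more detail than the paper itself does.
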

\begin{theorem}
$\B(\alpha)\notin \Nr_{\alpha}CA_{\alpha+1}$.
\end{theorem}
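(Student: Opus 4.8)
The plan is to derive a contradiction from a cardinality asymmetry that becomes visible only in the extra dimension. Assume, for contradiction, that $\B(\alpha)=\Nr_\alpha\mathfrak{P}$ for some $\mathfrak{P}\in CA_{\alpha+1}$, and work inside $\mathfrak{P}$ with the operator $\sigma:={}_\alpha s(0,1)$. This is a term of the $CA_{\alpha+1}$-type that transposes the coordinates $0$ and $1$ by routing coordinate $0$ through the spare dimension $\alpha$; being unavailable in the base type, it can only act once $\B(\alpha)$ is presented as an honest neat reduct, which is exactly what we shall exploit.

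Two facts about $\sigma$ are needed, both supplied by the preceding theorem with $A=\mathfrak{P}$ and $\beta=\alpha+1>\alpha$. First, by its part (iii), $\sigma\in Ism(Bl\Nr_\alpha\mathfrak{P},Bl\Nr_\alpha\mathfrak{P})=Ism(Bl\B(\alpha),Bl\B(\alpha))$; in particular $\sigma$ is an injective, order-preserving Boolean map on $\B(\alpha)$. Second, I would show $\sigma(1_v)\le L$, where $v=p[0,1]\circ Id$ and $L=1_{Id}$. Indeed $1_v\in\B(\alpha)=\Nr_\alpha\mathfrak{P}$ gives $c_\alpha 1_v=1_v$, so part (ii) applied with $x=1_v$ yields $\sigma(1_v)=\sigma(c_\alpha 1_v)\le t(c_\alpha 1_v)=t(1_v)$; and since $t$ is a $CA_\alpha$-term its value on $1_v$ is computed inside the neat reduct, so by part (i) $t(1_v)=1_{Id}=L$.

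The contradiction is now purely about cardinality. Since $\sigma$ is an order-preserving Boolean homomorphism with $\sigma(1_v)\le L$, it maps every $x\le 1_v$ to $\sigma(x)\le\sigma(1_v)\le L$; hence $\sigma$ restricts to an injection of $Rl_{1_v}\B(\alpha)$ into $Rl_L\B(\alpha)$. On the one hand $|Rl_L\B(\alpha)|\le\alpha$ by the cardinality lemma proved above. On the other hand $Rl_{1_v}\B(\alpha)$ contains the whole family $P(v)=\{p(v,r):r\in R\}$ of pairwise disjoint nonzero elements below $1_v$ --- and all of $P(v)$ lies in $\B(\alpha)$ precisely because $v\ne Id$ puts $P(v)$ into the generating set of $\B(\alpha)$ --- so $|Rl_{1_v}\B(\alpha)|\ge|R|>|\alpha|=\alpha$. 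An injection from a set of size $>\alpha$ into a set of size $\le\alpha$ is impossible, and this contradiction shows $\B(\alpha)\notin\Nr_\alpha CA_{\alpha+1}$.

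The only step that is more than bookkeeping is the inequality $\sigma(1_v)\le L$, that is, checking that the detour substitution through dimension $\alpha$ really collapses the $v$-pattern relation into the $Id$-pattern unit; this is where parts (i) and (ii) of the preceding theorem are combined and where one must be sure that the auxiliary term $t$ takes the same value in $\mathfrak{P}$ as in $\B(\alpha)$. Everything else is a direct consequence of the deliberate asymmetry built into $\B(\alpha)$: below $L$ only the countable $P_\omega(Id)$ is retained, while below $1_v$ the full uncountable $P(v)$ survives --- an asymmetry invisible to first-order logic (whence $\B(\alpha)\equiv\A(\alpha)$) but fatal to any $CA_{\alpha+1}$-dilation.
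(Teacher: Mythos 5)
Your proposal is correct and follows essentially the same route as the paper: the paper's (very terse) proof likewise combines the preceding theorem's facts about $t$ and ${}_{\alpha}s(0,1)$ — that ${}_{\alpha}s(0,1)$ is a Boolean monomorphism of $Bl\B(\alpha)$ sending everything below $1_v$ below $L=1_{Id}$ — with the lemma $|Rl_L\B(\alpha)|\leq\alpha$, to get the same cardinality contradiction with $|P(v)|=|R|>|\alpha|$. You have merely spelled out the details (in particular the step $\sigma(1_v)\leq t(1_v)=L$ via $c_{\alpha}1_v=1_v$) that the paper leaves implicit.
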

\begin{demo}{Proof}
If $B(\alpha)$ were a neat reduct then 
we would have $|Rl_L(B(\alpha)|=|R|$. 
But this is a contradiction.
\end{demo}

$\A(\alpha)$ constructed so far may or may not be a neat reduct. 
We now impose further restrictions on the $C_r$'s (or rather on the structure 
$\bold U=\langle U, C_r\rangle$), described in Lemma 1*, that {\it force} 
$A(\alpha)$ to be in $\cap_{k\in \omega}Nr_{\alpha}Ws_{\alpha+k}$.
These restrictions are described in Lemma 1** below.
Forming a ``limit" out of the $Ws_{\alpha+k}$'s ($k\in \omega$) , the $neat- \alpha$ 
reduct of which is $A(\alpha)$, we will show that $A(\alpha)\in Nr_{\alpha}Ws_{\alpha+\beta}$ for 
infinite $\beta$ as well.
This should be  done , of course, in such a way that does 
not interfere with what we have already 
established, namely:\par\noindent
$B(\alpha)$ is $not$ a neat reduct.\par\noindent
$A(\alpha)$ and $B(\alpha)$ are elementary equivalent.\par\noindent

To formulate Lemma 1**  we fix some needed notation:
\begin{athm}{Notation} Let $1\leq k<\omega$. Then $S(\alpha,\alpha+k)$ or 
simply 
$$S(\alpha,k)=\{i\in {}^{\alpha}(\alpha+k): \alpha+k-1\in Rgi\quad\text{and }
\{m\in \alpha:|\{i(m)\neq m\}|<\omega\}.$$ 
That is $S(\alpha,k)=\{i\in {}^{\alpha}(\alpha+k)^{(Id)}: \alpha+k-1\in Rgi\}$.
\par\noindent
$Cof^+R$ denotes the set of all non empty finite or cofinite subsets of $R$.
Let $C_r$ be an $\alpha$-ary relation symbol for all $r\in R$.
\par\noindent
For any $X\subseteq R$, $X$ finite we define the infinitary formulas:
$$\eta(X)=\lor\{C_r(x_0,\cdots x_j\cdots )_{j<\alpha}\, r\in X\},\quad\text{and }$$
$$\eta(R-X)=\land \{\neg C_r(x_0,\cdots x_j\cdots )_{j<\alpha}:r\in X\}.$$
\end{athm}
\par\noindent
$\eta(X)$ and $\eta(R-X)$ are {\it restricted} formulas, in the sense of [HMT2] sec 4.3. 
Satisfiability for such formulas (by $\alpha$-ary sequences) are defined the usual way. 
Below we shall have occasion to deal with (infinitary) formulas that are not restricted. 
These however can be obtained from restricted ones using  the (logical interpretation of the) 
$CA$ operations, i.e quantification on finitely many variables and equality. 
Now we are ready to formulate the infinite version of Lemma 1.

\begin{theorem}
There are a set $W$, an equivalence relation 
$E$ with $\alpha$-many blocks on $W$, $s\in {}^{\alpha}W$ with 
$s_i\in W_i$ the $i$th block, $i\in \alpha$, 
and $\alpha$-ary relations $C_r$ on $W$ for all $r\in R$,
such that conditions (i)-(v) below hold:
\begin{enumroman}
\item (Blocks) $C_r(w_0,w_1\cdots w_i\cdots )_{i<\alpha}$
implies  $D_E(w_0,w_1\cdots w_i\cdots )_{i<\alpha},$ for all $r\in R$ and $w_i\in W$.
Here $D_E(w_0, w_1\cdots w_i\cdots)_{i<j<\alpha}$ means that for any $i<j<\alpha$, 
$w_i,w_j$ are not $E$-equivalent i.e. they are in distinct blocks. 

\item (Symmetry) For all $f\in {}^{\alpha}W^{(s)}$ for all $r\in R$, for all permutations
$\pi\in ^{\alpha}\alpha^{(Id)}$, if $f\in C_r$ then $f\circ \pi\in C_r.$ 

\item (Bigness) For all $r\in R$, for all $i<\alpha$ 
and $v\in {}^{\alpha}W^{(s)}$ such that $D[v_i]_{i<\alpha}$
there exists $x\in W$ such that $v[i|x]\in C_r$.

\item (Saturation) For all $1\leq k<\omega$, for all 
$v\in {}^{\alpha+k-1}W^{(s)}$ one to one,  for all $x\in W$, for any
function $g:S(\alpha,k)\to Cof^+R$ 
for which $\{i\in S(\alpha,k):|\{g(i)\neq R\}|<\omega\}$, 
there is a $v_{\alpha+k-1}\in W\smallsetminus Rgv$ such that 
$xEv_{\alpha+k-1}$, i.e. $v_{\alpha+k-1}$ is in the same block as $x$,  
and  
$$\bigwedge \{D(v_{i_j})_{j<\alpha}\implies \eta(g(i))[\langle v_{i_j}\rangle]: 
i\in S(\alpha,k)\}.$$
\item (Disjointness) The $C_r$'s are pairwise disjoint.
\end{enumroman}
\end{theorem}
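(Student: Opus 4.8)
The plan is to build the structure $\langle W, E, (C_r)_{r\in R}\rangle$ of Lemma~1** by a direct transfinite recursion on a \emph{fixed} frame of exactly $\alpha$ blocks, rather than by passing to a saturated elementary extension; this sidesteps the fact that ``$E$ has exactly $\alpha$ classes'' is not elementary for infinite $\alpha$, so that a naive compactness argument could add spurious blocks. Concretely, I fix once and for all a disjoint union $W=\bigsqcup_{i<\alpha}W_i$, let $E$ be the equivalence relation whose classes are the $W_i$, and choose $s\in{}^{\alpha}W$ with $s_i\in W_i$; each $W_i$ is taken of a common cardinality $\lambda$ large enough (at least $|R|$, and large enough to index the recursion). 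The only objects to be constructed are the relations $C_r$, and I split the five conditions into the \emph{closure} conditions (i) [Blocks], (ii) [Symmetry], (v) [Disjointness], which constrain \emph{how} tuples may be placed into the $C_r$, and the \emph{richness} conditions (iii) [Bigness] and (iv) [Saturation], which are existential demands to be discharged one at a time.

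First I would enumerate, in a single recursion of length $\lambda$ with bookkeeping, all Bigness demands (triples $(r,i,v)$) and all Saturation demands (data $(k,v,x,g)$ with $v$ one-to-one and $g:S(\alpha,k)\to Cof^+R$ of finite support, i.e. $g(i)=R$ for all but finitely many $i$). At stage $\xi$ only $<\lambda$ tuples have been committed, so ``fresh'' block elements are always available. To meet a Bigness demand I add a single fresh $y\in W_i$ and place $v[i|y]$ into $C_r$. To meet a Saturation demand I choose a fresh $v_{\alpha+k-1}\in[x]_E\setminus\mathrm{Rg}\,v$ — keeping it inside the block of $x$, hence inside the fixed frame, which is exactly what preserves the block count — and, for each of the finitely many $i\in S(\alpha,k)$ with $g(i)\neq R$ for which the selected tuple $\langle v_{i_j}\rangle_{j<\alpha}$ has $E$-distinct coordinates, I place that tuple into some $C_r$ with $r\in g(i)$ when $g(i)$ is finite, and leave it out of $\bigcup_{r\notin g(i)}C_r$ when $g(i)$ is cofinite. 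After each placement I close under coordinate permutations to secure (ii), and I always assign each tuple to at most one $C_r$, which secures (v); condition (i) holds because every committed tuple has $E$-distinct coordinates by construction.

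Consistency of this procedure — that the finitely many placements demanded at a single Saturation step can be carried out at once without violating (ii) and (v) — is the heart of the matter and the step I expect to require the most care. The favourable point is that, since $v$ is one-to-one and $v_{\alpha+k-1}$ is fresh, distinct index maps $i\in S(\alpha,k)$ produce \emph{distinct} $\alpha$-tuples $\langle v_{i_j}\rangle_{j<\alpha}$, so the positive membership demands coming from different $i$ never concern the same tuple; and within a single $i$ the demand $\eta(g(i))$ is either a disjunction of memberships (met by choosing one witness $r\in g(i)$) or a finite conjunction of non-memberships (met by abstention). The genuine obstacle is the interaction of these demands with the permutation closure forced by (ii): a coordinate permutation $\pi$ sends the $i$-tuple to the $(i\circ\pi)$-tuple, and $S(\alpha,k)$ is closed under such $\pi$, so a tuple placed into $C_r$ for $i$ simultaneously places its $\pi$-images into $C_r$, and these may coincide with tuples constrained by $g(i\circ\pi)$.

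I would resolve this by first replacing $g$ by its symmetrization and checking that the demands remain consistent: one verifies that no tuple is ever asked both to lie in and to avoid a common $C_r$, using that $g$ takes values in $Cof^+R$ (so each single constraint is internally coherent) and that membership in $C_r$ is permutation-invariant under (ii), so the action of $\pi$ carries a coherent system of demands to a coherent one. Once this compatibility is established the recursion proceeds, and at its end every Bigness and Saturation demand has been met while (i), (ii), (v) are preserved throughout; (iii) and (iv) then hold by construction, and the block structure is exactly $\alpha$ because witnesses were always drawn from the fixed blocks $W_i$. An equivalent presentation would axiomatize (i)--(iii),(v) as a theory $T$, prove its consistency by the same fresh-witness bootstrap, and obtain (iv) as a finite-satisfiability/saturation statement; the combinatorial compatibility check above is precisely the finite-satisfiability lemma that such a presentation would need.
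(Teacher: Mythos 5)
Your outline is, modulo presentation, the same as the paper's own proof. The paper also runs a transfinite recursion (of length $|R|$) over a bookkeeping enumeration of the saturation data, in which every demand recurs cofinally often because the paper creates the universe one fresh element per stage (whereas you fix $W$ and the blocks in advance, which is slightly cleaner and, unlike the paper, makes the treatment of $E$ and of the requirement $x E v_{\alpha+k-1}$ explicit). It likewise uses the fresh element of the current stage as the witness $v_{\alpha+k-1}$, so that every newly constrained tuple contains a point occurring in no earlier constraint; it realizes a demand by putting the relevant tuple into exactly one $C_r$, namely for $r$ the $\prec$-least element of $g(i)$ for a fixed well-ordering $\prec$ of $R$; and it then closes under the finite permutations in $S_{\alpha}$ to secure (ii), exactly as you propose. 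Bigness you schedule as separate demands; in the paper it is absorbed into the closing ``it is not hard to check''.

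The genuine gap is the step you yourself call the heart of the matter and then settle by ``replacing $g$ by its symmetrization and checking that the demands remain consistent'': for the statement as written, that check is false, not merely delicate. Take $k=1$, $v=s$, $x=s_0$, let $\pi$ be the transposition $(0\,1)$, let $i_0\in S(\alpha,1)$ be given by $i_0(0)=\alpha$ and $i_0(j)=j$ for $j\geq 1$, and let $i_1=i_0\circ\pi$. Put $g(i_0)=\{r_1\}$, $g(i_1)=\{r_2\}$ with $r_1\neq r_2$, and $g(i)=R$ otherwise; this $g$ has finite support and values in $Cof^+R$. For any admissible witness $w=v_{\alpha}\in W_0$, the tuples $\langle v_{i_0(j)}\rangle_{j<\alpha}$ and $\langle v_{i_1(j)}\rangle_{j<\alpha}$ both have pairwise $E$-inequivalent coordinates and each is the $\pi$-image of the other; so Symmetry (ii) forces them into the same $C_r$'s, while Saturation demands the first lie in $C_{r_1}$ and the second in $C_{r_2}$, contradicting Disjointness (v). Hence your symmetrized $g$ has empty value on this orbit, and no recursion of any kind can succeed: condition (iv) quantified over all finite-support $g$ is inconsistent with (ii) and (v). The lemma is only true, and is only ever applied (as in Fact 6.6, where $g$ is read off a satisfiable conjunction $\phi^+$ over the symmetric structure), for $g$ constant on $S_{\alpha}$-orbits, or more generally with nonempty orbit-wise intersections of its values; under that restriction your recursion goes through verbatim. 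Be aware that the paper's printed proof has exactly the same hole — assigning the tuple of $i$ to $C_r$ for $r$ the $\prec$-least element of $g(i)$ and then closing under permutations places one orbit inside two distinct $C_r$'s for the $g$ above — so your proposal is no worse than the paper's argument; but it cannot be completed as stated, and the honest repair is to weaken (iv), not to hope the compatibility check succeeds for arbitrary $g$.
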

\begin{demo}{Proof}
Let $k\geq 1$. For brevity set 
$$Y(\alpha,R,k)=\{g\in {}^{S(\alpha,k)}Cof^+R: \{i\in S(\alpha,k):|\{g(i)\neq R\}|<\omega\}.$$
Let
$$Q=\cup\{I(^{\alpha+k-1}|R|)\times Y(\alpha,R,k):1\leq k<\omega\},$$
then $|Q|=|R|=\kappa$, say. Here $I(^{\alpha+k-1}|R|)$ stands for 
the set of all one to one functions
from $\alpha+k-1$ into $R$. 
Let $\rho$ be an enumeration of $Q$ such that:
for all $l<\kappa$, for all $q\in Q$, there exists $j$ with $l<j<\kappa$ such that $\rho(j)=q$.
Fix a well ordering $\prec$ of $R$.
let $l<\kappa$ and suppose that for all $\mu<l$ we have already defined the element
$x_{\mu}$, and the $\alpha$-ary relation 
$C_r^{\mu}\subseteq {}^{\alpha}W_{\mu}^{(s)}$, where $W_{\mu}=\{x_k:k<\mu\}$.
Assume that $\rho(l)=\langle \langle \beta_j \rangle_{j<\alpha+k-1},f\rangle$.
Then $\beta_j\in |R|$ for all $j<\alpha+k-1$, and $f\in Y(\alpha,R,k)$.
Let $x_l$ be an element not in $\cup \{W_{\mu}:\mu<l\}$.
If there exists $\mu<\alpha+k-1$ such that $l\leq \beta_{\mu}$, 
then for all $r\in R$ we define
$C_r^l=\cup \{C_r^{\mu}: \mu<l\}$. Else,  $l>\beta_{\mu}$ for all $\mu<\alpha+k-1$.
Let $x_{\mu}=x_{\beta_{\mu}}$ for $\mu<\alpha+k-1$ and let $x_{\alpha+k-1}=x_l$.
For all $r\in R$ and $ i\in S(\alpha,k)$ we let 
$$\langle u_{i_j}\rangle_ {j<\alpha}\in X_r^l\text { iff }r \text { is the }\prec-
\text { least element of }f( i)$$
and
$$C_r^l=\cup \{C_r^i:i<l\}\cup \{\langle t_{\pi(0)},t_{\pi(1)},\cdots \rangle:t\in X_r^l\text 
{ and }\pi\in S_{\alpha}\}.$$
Finally set 
$$W=\cup \{W_l:l<\mu\}\quad\text{and } C_r=\cup\{C_r^l:l<\mu\}.$$ 
It is not hard to check just like we did in the finite dimensional case,  that 
$\bold W=\langle W,C_r\rangle_{r\in R}$ is as desired.
\end{demo}

\par\noindent

\bigskip

Let $\bold W=\langle W, C_r\rangle$ and $s\in {}^{\alpha}W$ 
be as specified above
Then $W$ is a disjoint union of $\alpha$ copies. We let $W_i$ denote the $i$th copy.
We shall construct our algebras having unit $^{\alpha}W^{(s)}.$
Recall that $S_{\alpha}$ stands for the set of permutations in $^{\alpha}\alpha^{(Id)}$.
Let $u\in S_{\alpha}$ and $r\in R$. Then we set
$$p(u,r)=C_r\cap W_{u_0}\times W_{u_1}\times \cdots W_{u_i}\times\cdots \cap {}^{\alpha}W^{(s)}.$$
$$P(u)=\{p(u,r): r\in R\}.$$
Let $N$ be a fixed countable subset of $R.$
Let $$P_{\omega}(u)=\{p(u,r):r\in R\}.$$
With a slight abuse of notation we let $A(\alpha)$ and $B(\alpha)$ be the (new)  algebras 
consructed out of the (new) structure  $\bold W=\langle W, C_r\rangle_{r\in R}$, 
described in the previous lemma
That is $A(\alpha)\in Ws_{\alpha}$ and $B(\alpha)\subseteq A(\alpha)$ 
are defined as follows:
$$ A(\alpha)=Sg^C(\cup \{P(u): u\in S_{\alpha}\});$$
and
$$B(\alpha)=Sg^C((\cup \{P(u): u\in S_{\alpha}\smallsetminus \{Id\})\cup P_{\omega}(Id)).$$
Here $C$ is the full weak set algebra with greatest element $^{\alpha}W^{(s)}$, with
$s$ as specified in Lemma $1^{**}$. Recall that $s(i)\in W_i$, the $i$ th copy.
It is easy to check that  $A(\alpha)$ and $B(\alpha)$, so defined,
are (still) elementary equivalent (copy the proof of facts 6.1-2) 
and $B(\alpha)$ is (still) not a neat reduct (copy the proof of facts 6.3-6.5).  
Like in the finite case, we will use the new condition of 
``saturation" or  ``elimination of quantifiers" 
expressed in Lemma $1^{**}$ (iv) to  show that 
$A(\alpha)\in \cap_{k\in \omega}Nr_{\alpha}Ws_{\alpha+k}$. 
To ``lift" $A(\alpha)$ to arbitrary finite extra dimensions, we shall further need:

\subsubsection*{Some more Definitions.} Let $0\leq k<\omega$. Then we let
$W_k=W=\cup_{i<\alpha} W_i$. Fix $w\in W$. 
Let $s_k=s\cup \{\langle i,w\rangle: \alpha\leq i<(\alpha+k) \}$. 
Then $s_k\in {}^{\alpha+k}W$. (When $k=0$, then $s_k$ is just $s$).
Let $V_k$ be the weak space $^{\alpha+k}W^{(s_k)}$ and let 
$C(\alpha+k)$ be the full $Ws_{\alpha+k}$ with unit $V_k$. \par\noindent
Let $u\in {}^{\alpha+k}\alpha^{(Id)}$. 
Then 
$$F_k(u)=\{C_r(x_{i_0},\cdots x_{i_{j}}\cdots )_{j<\alpha}, \neg C_r(x_{i_0},\cdots x_{i_{j}}\cdots )_{j<\alpha}: 
r\in R,\  i\in {}^{\alpha}\alpha+k^{(Id)}$$
$$\text{and }
\langle u_{i_0},\cdots u_{i_j}\cdots \rangle_{j<\alpha}\quad\text{is one to one }\}\quad\cup Eq(u).$$
Recall that $Eq(u)$ stands for $\{x_i=x_j: x_i\neq x_j: (i,j)\in ker(u)\}$.  
Like the finite case we assume  that $\{T,F\}\in F_k(u)$. We put
$$ F_k(u)^*=\{\land J: J\subseteq_{\omega} F_k(u)\}\text { and } $$ 
$$F_k(u)^{**}=\{\lor J: J\subseteq_{\omega} F_k(u)^*\}.$$
Let $u\in {}^{\alpha+k}\alpha^{(Id)}$ and $\phi\in F_k(u)^{**}$. 
\par\noindent
Then we let $E(u,\phi)$ denote the following $(\alpha+k)$-ary relation on $V_k$
$$\{s\in V_k: s_j\in W_{u_j} \text { for all $j<\alpha+k$ and } \bold W\models 
\phi[\langle s_j\rangle_{j<\alpha+k}]\}.$$ 
As noted before, satisfiability is defined the usual way. 
In particular, when $k=0$, $u\in S_{\alpha}$ 
and $r\in R$, then $E(u,C_r(x_0,x_1,\cdots )$ is the same as $p(u,r)$.
\par\noindent
$ind(\phi)$ denotes the set $\{i\in \alpha+k:x_i\in var(\phi)\}$. 
\par\noindent
We let $G(k)=\{c_{(\Delta)}E(u,\phi), -c_{(\Delta)}E(u,\phi):\Delta\subseteq_{\omega} \alpha+k$, 
$u\in {}^{\alpha+k}\alpha^{(Id)},\phi\in F_k(u)^{**},\ \Delta\cap ind(\phi)=0\}$ 
$\cup\{d_{ij}^{V_k},-d_{ij}^{V_k}:i,j\in \alpha+k\}$.
\par\noindent
Here $c_{(\Delta)}$, the unary operation referred to as generalized 
cylindrification  in \cite{HMT1}.  
That is $c_{\emptyset}x=x$ and if $\Delta=\{k_0,\cdots k_{m-1}\}$  
is a non-empty finite subset of $\alpha+k$,  
then $c_{(\Delta)}x=c_{k_0}\cdots c_{k_{m-1}}x$.  
\par\noindent
For $i,j\in \alpha+k$, $d_{ij}^{V_k}$, 
or simply $d_{ij}$ is the diagonal element  $\{s\in V_k:\ s_i=s_j\}$.
\par\noindent
Forming finite intersections of elements in $G(k)$ we let
$$G(k)^*=\{\cap J: J\subseteq_{\omega} G(k)\}$$  
and forming finite unions of elements in $G(k)^*$, we finally let
$$G(k)^{**}=\{\cup J: J\subseteq_{\omega} G(k)^*\}.$$
We will show that $G(k)^{**}$ is a $Ws_{\alpha+k}$ (with unit $V_k$), that 
$G(0)^{**}=Nr_{\alpha}G(k)^{**}$ for all $k\in \omega$, and finally that
$G(0)^{**}=A(\alpha)$. This will show that  
$A(\alpha)\in \cap_{k\in \omega}INr_{\alpha}Ws_{\alpha+k}$.
To prove that $G(k)^{**}$ is a $Ws_{\alpha+k}$ we shall need:

\begin{athm}{Fact 6.6}
\begin{enumroman}
\item Let $u\in {}^{\alpha+k}\alpha^{(Id)}$, 
$\Delta \subseteq_{\omega} \alpha+k$ and $\phi\in F_k(u)^{**}$.
Then $c_{(\Delta)}E(u,\phi)\in G(k)^{**}$.
\item Let $g\in G(k)^{**}$ and $i,j\in \alpha+k$ be distinct.  Then $s_i^j(g)=c_j(g\cap d_{ij})\in G(k)^{**}$.
\item Let $H(k)=\{g\in G(k): g=c_{(\Delta)}E(u,\phi):\Delta\subseteq_{\omega}\alpha+k,u\in {}^{\alpha+k}\alpha^{(Id)}$ 
and $\phi\in F_k(u)^{**}\}$. Then $H(k)$ is closed under finite intersections.
\end{enumroman}
\end{athm}
\begin{demo}{Proof}
\begin{enumroman}
\item We first prove (i) when $\Delta$ consists of one element $j$, say.
That is we will show that for $u$ and $\phi$ as indicated above 
and $j<\alpha+k$ we have $c_jE(u,\phi)\in G(k)^{**}$.
We can and will assume that $\phi\in F_k(u)^*$ since 
$$c_jE(u,\phi_1 \lor\cdots \phi_n)=c_jE(u,\phi_1)\cup\cdots c_jE(u,\phi_n)$$
and $G(k)^{**}$ is closed under finite unions.
Then $\phi=\land J$, where $J\subseteq_{\omega} F_k(u)$.
Next, we  break up $\phi$ into two parts just like we did in the finite dimensional case; that is let
$J^+=\{\psi\in J: x_j\in var(\psi)\}$ and $J^-=\{\psi\in J:x_j\notin var(\psi)\}$.
For brevity set $\phi^+=\land J^+$ and  $\phi^-=\land J^-$,  where by the empty conjunction we understand the formula 
$T$. Now  $\phi=\phi^+\land \phi^-$, $\phi^-\in  F_k(u)^*$
and $x_j$ occurs only in $\phi^-$.
\begin{athm}{Notation} For $i,l\in \alpha+k$ and $\phi\in F_k(u)^{**}$ $\phi[i|l]$ 
stands for the formula obtained from $\phi$ by replacing all
(free and bound) occurences of $x_i$ in $\phi$ by $x_l$. 
\end{athm}
\noindent{\bf Case (a): }
$x_j=x_m\notin J^+$ and $x_j=x_m\notin J^+$ for all $m\in \alpha+k-\{j\}$.\par\noindent
Then either $\bold W\models \phi^+\equiv F$, in which case 
$\bold W\models \exists x_j \phi^+\equiv F$ , and so $c_jE(u,\phi)=c_jE(u,F)=0\in G(k)^{**}$.
Else  $\phi^+$ is satisfiable, in which case , by applying the saturation condition described
in  Lemma $1^{**}$, we get
$$\bold W\models (\exists x_{\alpha+k-1}\phi^+[j|\alpha+k-1])\equiv T,$$ 
which is the same as  $\bold W\models (\exists x_j\phi^+) \equiv T$, 
and so $$c_jE(u,\phi)=c_jE(u,(\exists x_j\phi^+) \land \phi^-)=c_jE(u,\phi^-).$$
The latter is in $G(k)^{**}$, since now  $x_j$ does not occur in $\phi^-$, 
and $\phi^-\in F_k(u)^*$. 
Note that $\phi^-\in F_k(v)^*$ for any $v$ that differs from $u$ in at most the $j$-th place;
and for any such $v$ we have $c_jE(u,\phi)=c_jE(v,\phi^-)$.
\par\noindent{\bf Case (b): }$x_j=x_m\in J^+$ or $x_m=x_j\in J^+$ for some $m\in \alpha+k$ and $m\neq j$.
\par\noindent
Assume that $x_j=x_m\in J^+$. Then , by definition of $F_k(u)$, we have $u_j=u_m$ . 
Also $\phi=(\land I)\land x_j=x_m$, for some $I\subseteq_{\omega} F_k(u)$.
For brevity let $\psi=\land I$. Then of course $\psi\in F_k(u)^*$. 
Computing we get  $$c_jE(u,\phi)=c_jE(u,\psi\land x_j=x_m)=c_jE(u,\psi[j|m]).$$
The latter is in  $G(k)^{**}$ since $x_j$ does not occur in $\psi[j|m]$, 
and $\psi[j|m]\in F_k(u)^*$ because $\psi \in F_k(u)^*$ and $u_j=u_m$.
In fact, it is not hard to see that $\psi[j|m]\in F_k(v)^{*}$, for any $v$ that differs from $u$ in at most the $j$ th place.
The case when $\Delta$ is an arbitrary finite subset of $\alpha+k$, 
follows  by using the above reasoning eliminating the variables occuring in $\phi$, 
with indices in $\Delta$, one by one. Alternatively one can use 
a straightforward induction on $|\Delta|$.
Note that we have actually proved the following infinite analogue of (**) in 
fact 3.1 way above: 
\par\noindent
\begin{athm}{(***)}Let $u\in {}^{\alpha}\alpha+k^{(Id)}$ and $\phi\in F_k(u)^{**}$. Let $\Delta=\{i_0,\cdots i_{n-1}\}$ be a finite subset 
of $\alpha+k$. Let $v\in {}^{\alpha}\alpha+k^{(Id)}$ be such that $v(j)=u(j)$ for all $j\notin \Delta$.
Then there exists $\psi\in F_k(v)^{**}$ such that $var(\psi)\cap \{x_{i_0},\cdots ,x_{i_{n-1}}\}=\emptyset$. 
i.e., $ind(\psi)\cap \Delta=\emptyset$, and $\bold W\models \psi\equiv \exists x_{i_0}\cdots \exists x_{i_{n-1}}\phi$.
In particular, $c_{(\Delta)}E(u,\phi)=c_{(\Delta)}E(v,\psi)$.
\item  Let $g\in G(k)^**$ and $i,j\in \alpha+k$ be distinct.
\end{athm}

{\bf Case (a): }$g=c_{(\Delta)}E(u,\phi)$ with $u\in {}^{\alpha+k}\alpha^{(Id)}$, $\phi\in F_k(u)^{**}$. 
\par\noindent
{\bf Subcase (a) : }
$j\in \Delta$.\par\noindent 
Then by \cite{HMT1} 1.5.8 (i) and 1.7.3 we have 
$$s_i^j(g)=s_i^jc_j(c_{(\Delta-\{j\})})E(u,\phi)=
c_j(c_{(\Delta-\{j\})})E(u,\phi)=g$$
\par\noindent
{\bf Subcase (b) : }
$j\notin \Delta$ and  $i\notin \Delta$. \par\noindent
Then by \cite{HMT1}, 1.5.8 (ii)  we have 
$$s_i^jg=s_i^jc_{(\Delta)}E(u,\phi)=c_{(\Delta)}s_i^jE(u,\phi).$$
Let $g^*=E(u,\phi)\cap d_{ij}$. Then $g^*$ and hence $s_j^ig=c_{(\Delta)}c_jg^*$, 
is equal to zero
if $u_i\neq u_j$.
Else, as easily checked,  $g^*=E(u,\phi \land x_i= x_j)$.  
By $u_i=u_j$ and $\phi\in F_k(u)^{**}$, 
we have $\phi \land x_i= x_j\in F_k(u)^{**}$.
By fact 6.6 (i)  we get 
that $$s_j^ig=c_{(\Delta)}c_jg^*=c_{(\Delta)}c_jE(u,\phi\land x_i=x_j)\in G(k)^{**}.$$
\par\noindent
{\bf Subcase (c) : }
$j\notin \Delta$ and $i\in \Delta$.\par\noindent
By \cite{HMT1} 1.7.3 and 1.5.8 (ii) we have 
 $$s_i^jc_{(\Delta)}E(u,\phi)=s_i^jc_{(\Delta-\{i\})}c_iE(u,\phi)
=c_{(\Delta-\{i\})}s_i^jc_iE(u,\phi).$$
Let $g^*=s_i^jc_iE(u,\phi)$. Then by \cite {HMT1} 
1.5.1 we have $g^*=c_j(c_i(E(u,\phi)\cap d_{ij})$.
Let $v=u_{u(j)}^{i}=u\circ [i|j]$. 
Here and elsewhere $[i|j]$ denotes the replacement that maps $i$ to $j$
and otherwise coincides with the identity. Then $v(j)=u(j)=v(i)$. Also, it is easy to see that 
$$c_i(E(u,\phi)\cap d_{ij})=E(v,\phi\land x_i=x_j).$$
Since $i\in \Delta$, we can assume by fact 6.6 (i)  that $x_i$ does not occur in $\phi$. 
Since $v$ differs from $u$ in at most the $ith$ 
place, we have $\phi\in F_k(v)^{**}$. But  $v(i)=v(j)$, and so  $x_i=x_j\in F_k(v)^{**}$, thus 
$\phi\land x_i=x_j\in F_k(v)^{**}$. By fact 6.6 (i)  we get that $g^*$ hence  $g$ is in $G(k)^{**}$.
\par\noindent
{\bf Case (b) : }$g=d_{kl}$, where $k,l\in \alpha+k.$ 
\par\noindent
By \cite{HMT1}, 1.5.4 we have $s_i^k(d_{kl})=d_{il}$ if $k\neq l$ and  
$s_i^jd_{kl}=d_{kl}$ if $j\notin \{k,l\}$. In either case we have $s_i^jg\in G(k)^{**}$.  
\par\noindent
{\bf Case (c) : }$g$ is an element in $G(k)^{**}$.
\par\noindent
Follows from the two previous cases, since the $s_i^j$'s ($i,j\in \alpha+k$) 
are $BA$ endomorphisms of the boolean algebra $G(k)^{**}$ (cf. \cite{HMT1} 
1.5.3 (i)). 
\item Let $g=c_{(\Delta_1)}E(u_1,\phi_1)\cap c_{(\Delta_2)}E(u_2,\phi_2)$
where $\Delta_i\cap ind(\phi_i)=0$ for $i\in \{1,2\}$.
Then $g=0$ if there exists $j\in (\alpha+k)-(\Delta_1\cup \Delta_2)$ such that $u_1(j)\neq u_2(j)$.
Else, it is not difficult to check that $g= c_{(\Delta_1\cap \Delta_2)}E(w,\phi_1\land \phi_2)$, 
where $w$ is defined as follows:
$$w(j)=u_1(j)=u_2(j)\text{ for }j\in \alpha+k-(\Delta_1\cup \Delta_2)$$
$$w(j)=u_1(j)\text{ if }j\in \Delta_2-\Delta_1$$
$$w(j)=u_2(j)\text{ if }j\in \Delta_1-\Delta_2,\text{ and }w\text{ is defined arbitrarily on }\Delta_1\cap \Delta_2.$$
Also by $ind(\phi_i)\cap \Delta_i=0$, for $i\in \{1,2\}$ and $w(j)=u_1(j)=u_2(j)$ for all 
$j\in \alpha+k-(\Delta_1\cup \Delta_2)$ we can assume by 6.6 (i) 
that $\phi_i\in F_k(w)^{**}$, 
hence $\phi_1\land \phi_2\in 
F_k(w)^{**}$. 
\end{enumroman}  
\end{demo}
To economise on notation we recall a few notions from \cite{HMT1}. 
A {\it generalized diagonal element} is a finite intersection of diagonal elements.
A {\it co-diagonal element} is the complement of a diagonal element. 
A {\it generalized co-diagonal element} is a finite intersection
of co-diagonal elements. In particular the unit of an algebra can be 
viewed as both a generalized diagonal and a 
generalized co-diagonal. 
\begin{athm}{Fact 6.7}
For all $0\leq k<\omega$, $G(k)^{**}\in Ws_{\alpha+k}$.
\end{athm}
\begin{demo}{Proof}
Clearly  $G(k)^{**}$ is a boolean field of sets with greatest element $V_k$. 
Also $G(k)^{**}$, by definition, contains all diagonal elements.
We are  thus left to check cylindrifications, by the additivity of which, 
it suffices  to show that for every  $g\in G(k)^*$ and $j<\alpha+k$ we have $c_jg\in G(k)^{**}$.
By fact 6.6 (iii) a typical element $g$ of $G(k)^*$ is of the form
\par\noindent
$a_0\cap b\cap c\cap -a_1\cap -a_2\cdots \cap -a_n$, where $n\in \omega$, 
$a_0,a_1,\cdots a_{n}\in H(k)$,
$b$ is a generalized diagonal element, and $c$ is a generalized co-diagonal element.
Now fix  $j<\alpha+k$ and let $g$ be as indicated above. 
\par\noindent
{\bf Case (a) : }
$b=d_{jl}\cap b'$ where $b'$ is a generalized diagonal element and $j\neq l$.
\par\noindent
Let $g^*=a_0\cap b'\cap c\cap -a_1\cdots \cap -a_n$. 
Then, of course $g^*\in G(k)^*$ and $c_jg=s_l^jg^*$.
\par\noindent
By fact 6.6 (i) the latter is in $G(k)^{**}$.
\par\noindent
{\bf Case (b) : }
$a_0=b=V_k$.
\par\noindent
In this case we have 
 $g= -c_{(\Delta_1)}E(u_1,\phi_1)\cdots \cap -c_{(\Delta_n)}E(u_n,\phi_n)
\cap -d_{jl_1}\cdots \cap -d_{jl_m}$, where $n,m\in \omega$, 
$\Delta_k\subseteq_{\omega}\alpha +k$ for $1\leq k\leq n$ 
and since $c_j(x\cap c_jy)=c_jx\cap c_jy$, we can assume 
that $j\notin \Delta_1\cup \cdots \Delta_n$. Assume that $g\neq 0$, 
for else there is nothing more to prove. In particular, $j,l_1,\cdots l_m$ are pairwise distinct. 
We claim that in this case we have   $c_jg=V_k$.
\par\noindent

Indeed, let $s=\langle s_l\rangle_{l\in \alpha+k}$ be an arbitrary element in $V_k$.
We will show that $s\in c_jg$, by which we will be done.
Choose $k_j\notin \{u_1(j),\cdots ,u_n(j)\}\cup \{i_{l_1},\cdots i_{l_m}\}$, and let  $u$
be an arbitrary element in $W_{k_j}.$
Let  $z=s(j|u)$. Assume that  $1\leq i\leq n$. 
Then $z\notin c_{(\Delta_i)}E(u_i,\phi_i)$ because
$z(j)=s(j|u)(j)=u\notin \cup_{i<n}W_{u_i(j)}$.
Assume now that  $1\leq i\leq m$. 
Then $z(j)\neq z(l_i)$, since they are in distinct copies by the choice of $k_j$,
i.e. $z\notin d_{jl_i}$.
We have shown that $z\in g$. Since $s$ differs fom $z$ in at most the $jth$ place we get
that $s\in c_jg$.  We have proved that $c_jg=V_k$.
\par\noindent
{\bf Case (c) : }
$$g=c_{(\Delta_0)}E(u_0,\phi_0)\cap -c_{(\Delta_1)}
E(u_1,\phi)\cap\cdots c_{(\Delta_n)}E(u_n,\phi_n)
\cap -d_{jl_1}\cdots\cap -d_{jl_m},$$ 
where $n,m\in \omega$, $\Delta_k\subseteq_{\omega}\alpha+k$ for 
$0\leq k\leq n$
and as in the previous case, we can assume that $j\notin \Delta_0\cup\cdots\Delta_n$ 
and that $j,l_1,\cdots l_m$ 
are pairwise distinct.\par\noindent
We can further assume that $\{l_1,\cdots l_m\}\subseteq \Delta_0$, for if $l_k\notin \Delta_0$ 
for some $1\leq k\leq n$,
then we can simply ``ignore" the co-diagonal element with indices $j$ and $l_k$, 
i.e. the element $-d_{jl_k}$,
because of the following:
$$c_{(\Delta_0)}E(u_0,\phi_0)\cap -d_{jl_{k}}=c_{(\Delta_0)}E(u_0,\phi_0\land x_j\neq x_{l_k})$$ 
if $u_0(j)=u_0(l_k)$,
and is equal to $c_{(\Delta_0)}E(u_0,\phi_0)$ otherwise.
Note too, that $\phi_0\land x_j\neq x_{l_k}\in F_k(u_0)^{**}$, 
whenever $\phi_0\in F_k(u_0)^{**}$ 
and $u_0(j)=u_0(l_k)$. In either case, $g$ can be written in the form:
$$c_{(\Delta_0)}E(u_0,\psi)\cap \cdots -c_{(\Delta_n)}E(u_n,\phi_n)\cap \cap \{-d_{jl}:l\in \{l_1,\cdots l_m\}-{l_k}\},$$ 
where $c_{(\Delta_0)}E(u_0,\psi)\in H(k)$.
\par\noindent
Now let 
$$\Gamma =\Delta_0-\{l_1,\cdots l_m\}\quad\text{and }\Gamma_1=\{l_1,\cdots l_m\}.$$ 
Then $\Delta_0=\Gamma\cup \Gamma_1$.
Let $S=\{j\}\times \Gamma_1=\{(j,l): l\in \Gamma_1\}$ and let  $d_{S}$  be the co-diagonal element:
$\cap \{-d_{jl}: (j,l)\in S\}$.\par\noindent 
Let $g*=-c_{(\Delta_1)}E(u_i,\phi_1)\cap\cdots -c_{(\Delta_n)}E(u_n,\phi_n)$,
$g_1= c_{(\Gamma)}E(u_0,\phi_0)\cap g*$ and 
$g_2= c_{(\Gamma_1)}E(u_0,\phi_0)\cap d_{S}\cap g*$.
Then by \cite {HMT1} 1.7.3, and 
the additivity of $c_j$ we have $c_jg=c_jg_1\cup c_jg_2$. 
\par\noindent
We shall show that $c_jg_1\in G(k)^{**}$ 
and $c_jg_2\in G(k)^{**}$, by which we will be done.
\par\noindent
{\it Proof of $c_jg_1\in G(k)^{**}$}.
\par
Write $g_1=c_{(\Gamma)}E(u_0,\phi_0)\cap\cdots -c_{(\Delta_n)}E(u_n,\phi_n)$.
\par\noindent
We can assume that for $1\leq k\leq n$ and $i\notin \Gamma \cup \Delta_k$, 
we have $u_0(i)=u_k(i)$, for if 
$u_0(i)\neq u_k(i)$ for some $1\leq k\leq n$, and some $i\notin \Gamma\cup \Delta_k$, then
$c_{(\Gamma)}E(u_0,\phi_0)\cap -c_{(\Delta_k)}E(u_k,\phi_k)=c_{(\Gamma)}E(u_0,\phi_0)$.
\par\noindent
For the time being assume that $\phi_1=\phi_2=\cdots \phi_n=T$.
\par\noindent
{\it Claim} 
\par\noindent
$c_jg_1=c_jc_{(\Gamma)}E(u_0,\phi_0)\cap -c_jc_{(\Delta_1)}E(u_1,T)\cap\cdots -c_jc_{(\Delta_n)}E(u,T)$.
\par\noindent
{\it Proof of claim}
\par
It is straightforward to show that r.h.s is contained in $c_jg$. Now for the opposite inclusion:

Let $s\in c_jg_1$. Then there exists $b\in W$ such that
$s(j|b)\in c_{(\Gamma)}E(u_0,\phi_0)$ and $s(j|b)\notin c_{(\Delta_k)}E(u_k,T)$ 
for all  $1\leq k\leq n$.
Thus $s\in c_jc_{(\Gamma)}E(u_0,\phi_0)$. 
We will show that $s\notin c_jc_{(\Delta_k)}E(u_k,T)$ for
all $1\leq k\leq n$, by which case we will be done. 
Towards this end, fix $1\leq k\leq n$ and assume to the contrary that
$s\in c_jc_{(\Delta_k)}E(u_k,T)$. Then there exists $c\in W$ 
such that $s(j|c)\in c_{(\Delta_k)}E(u_k,T)$.
By $s_c^j\in c_{(\Delta_k)}E(u_k,T)$ and $j\notin \Delta_k$
we get $c=u_k(j)$. Similarly, by $s_b^j\in c_{(\Gamma)}E(u_0,\phi_0)$ 
and $j\notin \Gamma$ we get $b=u_0(j)$.
Recall that we assumed that $u_0(j)=u_k(j)$ 
since $j\notin \Gamma\cup \Delta_k$, thus $b=c$.
Since $s(j|b)$ and $s(j|c)$ differ in at most the $jth$ place , 
$b=c$ and $s(j|c)\in c_{(\Delta_k)}E(u_k,T)$ we get
$s(j|b)\in c_{(\Delta_k)}E(u_k,T)$. Contradiction proving  that 
$s$ is as required.
\par\noindent
Now the general case follows from this by noting that 
$$-c_{(\Delta)}E(u,\phi)=c_{(\Delta)}E(u,\neg \phi)\cup -c_{(\Delta)}E(u,T),$$ 
and that $H(k)$ is closed 
under finite intersections.
\par\noindent
We now turn to showing that $c_jg_2\in G(k)^{**}$. 
This will be done by showing that 
$a=c_{(\Gamma_1)}E(u_0,\phi_0)\cap d_S$, hence $g_2=a\cap g*$, can be written 
as a finite  union of elements of the form 
$a_0\cap -a_1\cdots \cap -a_n$ where $a_i\in H(k)$.
Then by the additivity of $c_j$, and the previous case we get the desired.
We start off when $\Gamma_1$ consists of a single element $m$ say.
We compute $a=c_mE(u_0,\phi_0)\cap -d_{jm}$. 
For brevity set $u=u_0$. Let $v=u(m|u(j))=u\circ [m|j]$. 
Then $v(m)=v(j)=u(j)$.
Moreover it is not hard to check that 
$$a=(c_mE(u,\phi_0)\cap -E(v,T))\cup E(v,\phi_0\land x_j\neq x_m).$$
Since $x_m$ does not occur in $\phi_0$, and $v$ 
differs from $u$ in at most the $m$-th place, we infer from fact 6.6 (i) 
that $\phi_0\in F_k(v)^{**}$. By $v(m)=v(j)$ we get that $x_j\neq x_m$ is 
also in $F_k(v)$ hence in $F_k(v)^{**}$.
It follows that  $\phi_0\land x_j\neq x_m\in F_k(v)^{**}$. We have shown 
that $E(v,\phi_0\land x_j\neq x_m)\in H(k)$.
Thus $a$ is as desired, since  $c_mE(u,\phi_0)$ and $E(v,T)$ are in $H(k)$, too. 
Now for the general case. Assume that $\Gamma_1=\{l_1,\cdots l_m\}$. 
Having treated the case when $m=1$
we now assume that $m>1$. 
Then $$a=c_{(\Gamma_1)}E(u,\phi)\cap -d_{S}=\cap \{a_k: 1\leq k\leq m\}$$ where 
$a_k=c_{(\Gamma_1-\{\l_k\})}c_{l_k}E(u,\phi)\cap -d_{jl_k}$. 
Therefore  $a$, hence $g_2= a\cap g^*$, is indeed the finite union of 
elements of the $a_0\cap -a_1\cdots \cap -a_n$ where $a_i\in H(k)$.
Thus $c_jg_2$  is in $G(k)^{**}$.  
By this the proof of fact 6.7 is complete.
\end{demo}
\begin{athm}{Fact 6.8}
$G(0)^{**}\cong Nr_{\alpha}G(k)^{**}$, for all $0\leq k<\omega$.\footnote {We should point out that the complete 
analogue of fact 3.3 is true: 
That is for $0\leq l<k<\omega$ we have $G(l)^{**}\cong Nr_{\alpha+l}G(k)^{**}$. Fact 6.8
is the  special case when $l=0$, which is all what we need to show that 
${\bf A}(\alpha)\in Nr_{\alpha}Ws_{\alpha+k}$, for all $k<\omega$.}  
\end{athm}
\begin{demo}{Proof}
Let $k\geq 0$. Define $i(k)$ like in fact 3.3.
That is for $a\in G(0)^{**}$, let $i(k)(a)=\{t\in V_k :t\upharpoonright \alpha\in a\}$.
Then $i(k)\in Ism(A(\alpha), Nr_{\alpha}C(\alpha+k))$, where $C(\alpha+k)$ is the full 
$Ws_{\alpha+k}$ with unit $V_k$.
We will show that $i(k)G(0)^{**} \subseteq G(k)^{**}$.
For that it clearly suffices to show that for all $u\in {}^{\alpha}\alpha^{(Id)}$, and 
$\phi\in F_0(u)$, we have $i(k)E(u,\phi)\in G(k)^{**}$. Let $u$ and $\phi$ be as indicated:
Then $i(k)E(u,\phi)=\cup\{E(v,\phi):v\in {}^{\alpha+k}\alpha^{(Id)}:v\upharpoonright \alpha=u\}
=c_{\alpha}\cdots c_{\alpha+k-1}E(u*,\phi)$ where $u*=u\cup 
\{\langle i,0\rangle:\alpha\leq i< \alpha+k\}$. 
Since $F_0(u)\subseteq F_k(v)$ whenever $v\in {}^{\alpha+k}\alpha^{(Id)}$ is such that 
$v\upharpoonright \alpha=u$, 
we get $\phi\in F_k(u*)^{**}$, hence $i(k)E(u,\phi)\in G(k)^{**}$.
We have shown that $i(k)\in Ism(G(0)^{**}, Nr_{\alpha}G(k)^{**})$.
We will now show that $i(k)$ is actually onto $Nr_{\alpha}G(k)^{**}$.
Since the $c_i$'s are additive, it suffices to show that for all $g\in G(k)^*$,
there exists $a\in G(0)^{**}$, such that $i(k)a=c_{\alpha}\cdots c_{\alpha+k-1}g$.
\par\noindent
{\bf Case (a): }
$g=E(v,\phi)$, $v\in {}^{\alpha+k}\alpha^{(Id)}$, and $\phi\in F_k(v)^{**}$. 
\par\noindent
Then by fact 6.6 (i)   there exists $\psi\in F_k(v)^*$ such that 
$var(\psi)\cap \{x_{\alpha},\cdots x_{\alpha+k-1}\}=\emptyset$ and
$\bold W\models \exists x_{\alpha}\cdots \exists x_{\alpha+k-1}\phi\equiv \psi$.
Let $u=v\upharpoonright \alpha\in {}^{\alpha}\alpha^{(Id)}$.
By noting that $F_0(u)^*=\{\phi\in F_k(v)^*: var(\phi)\subseteq \alpha\}$, 
we get that $\psi\in F_0(u)^*$. Moreover we have  
$i(k)E(u,\psi)=c_{\alpha}\cdots c_{\alpha+k-1}E(u,\phi)$.
\par\noindent
{\bf Case (b): }
$g=a\cap c$, where $a=-a_0\cap \cdots -a_n$, $a_i\in H(k)$ and $c$ is a generalized codiagonal.\par\noindent
If $c_{\alpha}\cdots c_{\alpha+k-1}g=g$, then $g\in G(0)^{**}$. 
Else
$c_{\alpha}\cdots c_{\alpha+k-1}g$ is either $0$ or $V_k$, pending on whether $g=0$ or not.
The choice of $a\in G(0)^{**}$ in this case is also obvious.    
\par\noindent
{\bf Case (c): }
$g=d_0\cap -d_1\cap\cdots -d_n\cap b\cap c$, where $d_i\in H(k)$, $b$ is a generalized diagonal, and $c$ a
generalized codiagonal.\par\noindent
Assume that  $1\leq k<\omega$; else there is nothing to prove. \par\noindent
Let $\Gamma=\{\alpha,\cdots \alpha+k-1\}$.
By the proof of fact 6.7 ,  we have $c_{(\Gamma)}g$ 
is the finite union of elements of the form $a_0\cap- a_1\cdots \cap- a_n\cap b\cap c$ where
$a_0, \cdots a_n\in H(k)$, $b$ is a generalized diagonal and $c$ a generalized codiagonal.
Further we can assume that if $b=\cap \{d_{ij}^{V_k}:i,j\in \Delta_1\}$ and $c=\cap \{-d_{ij}^{V_k}:i,j\in \Delta_2, i\neq j\}$
then $\Delta_i\subseteq_{\omega} \alpha$, for $i\in \{1,2\}$. 
By the additivity of generalized cylindrifications (namely of $c_{(\Gamma)}$), we can and will assume that
$$c_{(\Gamma)}g=c_{(\Gamma_0)}E(v_0,\phi_0)\cap c_{(\Gamma_1)}E(v_1,\phi_1)\cdots 
\cap -c_{(\Gamma_n)}E(v_n,\phi_n)\cap b\cap c,$$
where for each $i\leq n$ we have  
$\Gamma\subseteq \Gamma_i\subseteq_{\omega}\alpha+k$,  $\phi_i\in F_k(v_i)^{**}$, $b$ is a 
generalized diagonal and $c$ is a generalized codiagonal as indicated above. 
By  fact 6.6 (i)   we can assume that $var(\phi_i)\cap \{x_{\alpha},\cdots x_{\alpha+k-1}\}=\emptyset$.
For $i\leq n$ let $u_i=v_i\upharpoonright \alpha$. 
Then $u_i\in {}^{\alpha}\alpha^{(Id)}$, and by fact 6.6 (i) $\phi_i\in F_0(u_i)^{**}$.
Let $b'=\cap \{d_{ij}^{V_0}:i,j\in \Delta_1\}$ and $c'=\cap \{d_{ij}^{V_0}:i,j\in \Delta_2, i\neq j\}$.
\par\noindent
Let $a= c_{(\Gamma_0-\Gamma)}E(u_0,\phi_0)\cap\cdots c_{(\Gamma_n-\Gamma)}E(u_n,\phi_n)\cap b'\cap c'$.
Then $a\in G(0)^{**}$ and an easy checked $i(k)a=c_{(\Gamma)}g$.
\end{demo}
\begin{athm}{Fact 6.9}
$G(0)^{*}=A(\alpha)$.
\end{athm}
\begin{demo}{Proof}
Let $u\in {}^{\alpha}\alpha^{(Id)}$ be one to one and $r\in R$. Then by definition we have
$$p(u,r)=E(u,C_r(x_0,x_1,\cdots )).$$
Hence $p(u,r)\in F_0(u)$.  Since $G(0)^{**}$ is a $Ws_{\alpha}$ 
containing the generators of $A(\alpha)$ 
it follows that
that $A(\alpha)\subseteq G(0)^{**}$.
For the other inclusion it is enough to show that for $u\in {}^{\alpha}\alpha^{(Id)}$ and $\phi\in F_0(u)$, we have
$E(u,\phi)\in A(\alpha)$. We start by showing that $E(u,T)\in A(\alpha)$ for all such $u$.
\par\noindent
{\bf Case (a): }
$u$ is one to one.\par\noindent 
Let $r\in R$ . Then $c_0(p(u,r))\cap c_1(p(u,r))=E(u,T)\in A(\alpha)$.
\par\noindent
{\bf Case (b): }
$u$ is not one to one.\par\noindent
Let $J$ be a finite subset of $\alpha$, such that $u(J)\subseteq J$ and $u(i)=i$ whenever $i\notin J$.
Such a $J$ exists, but of course is not unique. Fix one such $J$.  Let $u'=u|J$. Then $u':J\to J$. Let $n=|Rgu'|$. Then $n<|J|$, because $u$ hence $u'$ is not one to one.
We next procced as in the finite dimensional case; correlating a permuation $v$ to $u'$ as follows:
Let $y\in {}^nJ$ such that $y(0)<\cdots <y(n-1)$ and $\{u'_{y(0)},\cdots u'_{y(n-1)}\}=Rgu'$. 
\par\noindent
For $i<l$ put $m_{y(i)}=\min (u' {}^{-1}(u'_{y(i)}))$. Choose $v\in S_J$ ( a permutation on $J$) 
such that $v(m_{y(i)})=u'_{y(i)}$.
Such a $v$ exists. Let $I=J-\{m_{y(0)},\cdots ,m_{y(l-1)}\}$; suppose that $I=\{j_0,\cdots j_k\}$, $k\in \omega$ 
and $j_0<j_1\cdots <j_k$.
Let $t_i^l(x)=c_lx\cap d_{il}$. Let $v*=v\cup Id_{\alpha-J}$. Then $v*$ is a permutation on $\alpha$, and
like the finite case, we have
$$t_{j(k)}^{u_{j(k)}}\circ \cdots \circ t_{j(0)}^{u_{j(0)}}E(v*,T)=E(u,T).$$
From case (a) we get that  $E(u,T)\in A(\alpha)$. Now let $u\in {}^{\alpha}\alpha^{(Id)}$ 
and $i,j\in ker(u)$. Then $E(u,x_i=x_j)=E(u,T)\cap d_{ij}\in A(\alpha)$.
Also for all  $u,v\in S_{\alpha}$, we have by Lemma 1**(i), 
$E(u,C_r(x_0,x_1,\cdots ))=E(u,C_r(x_{v(0)},\cdots )=p(u,r)$.
Finally by noting that $E(u,\neg\phi)=E(u,T)-E(u,\phi)$ we get the desired. 
\end{demo}
\begin{athm}{Fact 6.10}
$A(\alpha)\in INr_{\alpha}Ws_{\beta}$, for all $\beta>\alpha$.
\end{athm}
\begin{demo}{Proof} 
By fact 6.9, it suffices to show that $A(\alpha)\in Nr_{\alpha}Ws_{\alpha+\beta}$ for infinite $\beta$.
So let $\beta\geq \omega$. Let $V_0={}^{\alpha}W^{(s)}$ be the unit of $A(\alpha)$. 
\par\noindent
Let $w\in W$. Let $s_{\beta}=s\cup \{(i,w):\alpha\leq i<\beta\}$. 
Then $s_{\beta}\in {}^{\beta}W$.
Let $V_{\beta}$ be the weak space ${}^{\alpha+\beta}W^{(s_{\beta})}$ and let 
$C(\alpha+\beta)$, or simply $C(\beta)$,  be the full $Ws_{\alpha+\beta}$ 
with unit $V_{\beta}$.
For $k<\omega$ let $i(k,\beta)$ be the function with domain $G(k)^{**}$, such that
$i(k,\beta)a=\{s\in V_{\beta}: s\upharpoonright \alpha+k\in a\}$, $a\in G(k)^{**}$.
Then $i(k,\beta):G(k)^{**}\to C(\beta)$.
Let $G(\beta)=Sg^{C(\beta)}\{i(k,\beta)G(k)^{**}: 0\leq k<\omega\}$. 
Then $G(\beta)\in Ws_{\alpha+\beta}$.
Moreover, it is easy to show  that $G(\beta)=\cup \{i(k,\beta)G(k)^{**}:0\leq k<\omega\}$.
From this together with facts 6.8 and 6.9, 
we get (using the same ideas of the proof of fact 3.4) 
that 
$A(\alpha)\cong Nr_{\alpha}G(\beta)$, thus 
$A(\alpha)\in INr_{\alpha}Ws_{\alpha+\beta}$.  
\end{demo}
Finally we show
\begin{theorem} The atom structure $\At\B$ is not weakly neat.That is for any atomic $\D\in \CA_{\alpha}$ if $\At\D\cong \At\B$, 
then $\D\notin \Nr_{\alpha}\CA_{\alpha+1}$.
\end{theorem}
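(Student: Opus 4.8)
The plan is to run the same cardinality argument that shows $\B=\B(\alpha)$ itself is not a neat reduct, but to extract the two pieces of data it needs purely from the atom structure, so that they are available for \emph{any} $\D$ with $\At\D\cong\At\B$. Write $v=p[0,1]\circ Id\in S_{\alpha}$ as before, and in $\B$ put $\mathcal V_\B=\{a\in\At\B:a\le 1_v\}$ and $\mathcal I_\B=\{a\in\At\B:a\le 1_{Id}\}$. Two facts about $\B$ are the engine of the proof. First, $|\mathcal V_\B|\geq |R|$, since the distinct atoms $p(v,r)$ $(r\in R)$ all lie below $1_v$ and belong to $\B$ because $v\neq Id$. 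Second, $|\mathcal I_\B|\leq\alpha$, because the atoms below $1_{Id}=L$ are exactly the atoms of $\Rl_L\B$, and $|\Rl_L\B|\leq\alpha$ was already established. Thus in $\B$ the block below $1_v$ is strictly larger than the block below $1_{Id}$.

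Now fix an isomorphism $\theta:\At\B\to\At\D$ and set $\mathcal V=\theta[\mathcal V_\B]$ and $\mathcal I=\theta[\mathcal I_\B]$, so $|\mathcal V|\ge|R|$ and $|\mathcal I|\le\alpha$. Suppose, for contradiction, that $\D=\Nr_\alpha\C$ with $\C\in\CA_{\alpha+1}$. By the theorem on the transposition operator, ${}_{\alpha}s(0,1)$ is a Boolean automorphism of $\Nr_\alpha\C=\D$, and for every $x$ the inequality ${}_{\alpha}s(0,1)\,c_\alpha x\le t(c_\alpha x)$ holds in $\C$. Take any atom $a'\in\mathcal V$. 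Since $a'\in\Nr_\alpha\C$ we have $c_\alpha a'=a'$, so ${}_{\alpha}s(0,1)(a')\le t(a')$. As ${}_{\alpha}s(0,1)$ is a Boolean automorphism, ${}_{\alpha}s(0,1)(a')$ is again an atom of $\D$ and the map is injective on atoms. Hence, once we know that every atom of $\D$ below $t(a')$ lies in $\mathcal I$, it follows that ${}_{\alpha}s(0,1)$ restricts to an injection of $\mathcal V$ into $\mathcal I$, forcing $|R|\le|\mathcal V|\le|\mathcal I|\le\alpha$ and contradicting $|R|>|\alpha|$.

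The remaining point, and the step I expect to be the real obstacle, is the transfer of the containment ``the atoms below $t(a)$ lie in the $Id$-block'' from $\B$ to $\D$ along $\theta$. In $\B$ this holds: for $a\le 1_v$, monotonicity of the positive, substitution-type term $t$ together with $t^{\B}(1_v)=1_{Id}$ gives $t^{\B}(a)\le 1_{Id}$, so every atom below $t^{\B}(a)$ is in $\mathcal I_\B$. To move this to $\D$ I would use that every atomic, completely additive algebra embeds as a subalgebra into the complex algebra of its atom structure, so that $t^{\B}(a)=t^{\Cm\At\B}(a)$ and $t^{\D}(a')=t^{\Cm\At\D}(a')$ are computed entirely inside the complex algebras; the isomorphism $\theta$ lifts to an isomorphism $\Cm\At\B\to\Cm\At\D$ carrying $a\mapsto a'$ and $\mathcal I_\B\mapsto\mathcal I$, whence the predicates ``$b\le t(a)$'' and ``$b\in\mathcal I_\B$'' are transported to ``$b'\le t(a')$'' and ``$b'\in\mathcal I$''. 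The care needed is precisely in checking that the block $\mathcal I_\B$, the predicate $b\le t(a)$, and the cardinalities $|\mathcal V_\B|,|\mathcal I_\B|$ are genuine atom-structure invariants: the blocks are, since two atoms lie in the same block iff $c_i a=c_i b$ for all $i<\alpha$ (recall that $c_i a=c_iL$ for every nonzero $a\le L$), and the term predicate is, by the complete additivity underlying the complex-algebra computation. With that transfer in hand, the injection of the previous paragraph yields $|R|\le\alpha$, the desired contradiction, and hence $\D\notin\Nr_{\alpha}\CA_{\alpha+1}$.
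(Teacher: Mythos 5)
Your proposal is correct and is essentially the paper's own argument: the paper's very terse proof likewise notes that any such $\D$ contains (a copy of) the term algebra over $\At\B$ — hence the elements $1_v$, $1_{Id}$ and the counts of atoms below each — and then runs the same cardinality trick, namely that in a neat reduct ${}_{\alpha}s(0,1)$ is a Boolean automorphism which, via the inequality ${}_{\alpha}s(0,1)c_{\alpha}x\leq t(c_{\alpha}x)$ and $t(1_v)=1_{Id}$, injects the $|R|$-many atoms below $1_v$ into the at most $\alpha$-many atoms below $1_{Id}$, contradicting $|R|>|\alpha|$. Your write-up simply makes explicit the transfer step (through $\Cm\At\B\cong\Cm\At\D$ and complete additivity of cylindrifications) that the paper compresses into ``the same cardinality trick implemented above works.''
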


\begin{proof}
Let $\D$ be an atomic algebra such that $\At\D\cong \At\B$. Then 
$\D$ will contain the term algebra which will be the disjoint copies indexed by $^{\alpha}\alpha^{(Id)}$ except that $P_{Id}$ is countable. 
The same cardinality trick implemented above works. 
Had it been a neat reduct that ${}_{\alpha}s(0,1)P_r=P_{Id}$ 
would be uncountable and this cannot happen.
\end{proof}


\end{document}